\newtheorem{thm}{Theorem}[section]
\newtheorem{prop}[thm]{Proposition}
\newtheorem{lemma}[thm]{Lemma}
\newtheorem{cor}[thm]{Corollary}
\newtheorem{defn}[thm]{Definition}
\newtheorem{lem-defn}[thm]{Lemma/Definition}
\newtheorem{example}[thm]{Example}
\newtheorem{remark}[thm]{Remark}
\numberwithin{equation}{section}
 \numberwithin{equation}{section}
\newcommand{\C}{{\mathbb C}}
\newcommand{\sP}{{\mathscr{P}}}
\newcommand{\sQ}{{\mathscr{Q}}}
\newcommand{\comment}[1]{}
\renewcommand{\emptyset}{\varnothing}
\renewcommand{\tilde}{\widetilde}
\DeclareFontFamily{OT1}{pzc}{}
\DeclareFontShape{OT1}{pzc}{m}{it}%
              {<-> s * [0.900] pzcmi7t}{}
\DeclareMathAlphabet{\mathpzc}{OT1}{pzc}%
                                 {m}{it}
\DeclareMathAlphabet      {\mathsf}{OT1}{cmss}{m}{n}
\DeclareRobustCommand\widecheck[1]{{\mathpalette\@widecheck{#1}}}
\def\@widecheck#1#2{%
   \setbox\z@\hbox{\m@th$#1#2$}%
   \setbox\tw@\hbox{\m@th$#1%
      \widehat{%
         \vrule\@width\z@\@height\ht\z@
         \vrule\@height\z@\@width\wd\z@}$}%
   \dp\tw@-\ht\z@
   \@tempdima\ht\z@ \advance\@tempdima2\ht\tw@ \divide\@tempdima\thr@@
   \setbox\tw@\hbox{%
      \raise\@tempdima\hbox{\scalebox{1}[-1]{\lower\@tempdima\box\tw@}}}%
   {\ooalign{\box\tw@ \cr \box\z@}}}
\begin{document}{\allowdisplaybreaks[4]}

\title{Effective good divisibility of rational homogeneous varieties}


\author{Haoqiang Hu}
\address{School of Mathematics, Sun Yat-sen University, Guangzhou 510275, P.R. China}
\email{huhq6@mail2.sysu.edu.cn}

\author{Changzheng Li}
 \address{School of Mathematics, Sun Yat-sen University, Guangzhou 510275, P.R. China}
\email{lichangzh@mail.sysu.edu.cn}

\author{Zhaoyang Liu}
\address{School of Mathematics, Sun Yat-sen University, Guangzhou 510275, P.R. China}
\email{liuchy73@mail2.sysu.edu.cn}
\thanks{ 
 }
 \thanks{While this manuscript was almost finished,  Mu\~{n}oz, Occhetta and Sol\'{a} Conde posted a preprint \cite{MOS22} independently proving  Theorems 1.1 and 1.2 for  rational homogeneous varieties of classical type.   Our proof is completely different from theirs, and our  main results hold for all Lie types.}

\date{
      }




\begin{abstract}
We compute the effective good divisibility of a rational homogeneous variety, extending an earlier result for complex Grassmannians by Naldi and Occhetta.
Non-existence of nonconstant morphisms to rational homogeneous varieties of classical Lie type are obtained as applications.
\end{abstract}

\maketitle

\section{Introduction}\label{S:intro}

The notion of \textit{effective  good divisibility} $\mbox{e.d.}(X)$ of a complex projective manifold $X$ was introduced by   Mu\~{n}oz, Occhetta and Sol\'{a} Conde \cite{MOS20}, refining the notion of good divisibility introduced by Pan \cite{Pan}. It is the upper bound of the total degree $i+j$  of two nonzero effective classes $x_i\in H^{2i}(X)$ and $x_j\in H^{2j}(X)$ with $x_i\cup x_j\neq 0$.  It has nice applications on the nonexistence of nonconstant morphisms.
As shown by Naldi and Occhetta \cite{NaOc}, the  {effective  good divisibility} of a complex Grassmannian $Gr(m, n+1)$ is equal to $n+1$. It is then quite natural to ask for this quantity for a general rational homogeneous variety $G/P$, the quotient of a complex simple and simply-connected Lie group $G$ by its parabolic subgroup $P$.

The question on $\mbox{e.d.}(G/P)$ is actually closely related with a     problem of characterizing the  degree of quantum variables in   quantum product, in earlier and extensive  studies   of quantum cohomology.
The quantum cohomology ring $QH^*(G/P)=(H^*(G/P)\otimes \mathbb{Z}[\mathbf{q}], \star)$ is a deformation of the classical cohomology $H^*(G/P)$, by incorporating genus zero, three-pointed Gromov-Witten invariants.  It has a canonical basis of Schubert classes $[X_u]$, and the evaluation $[X_u]\star [X_v]|_{\mathbf{q}=\mathbf{0}}$ coincides with $[X_u] \cup [X_v]$.  As shown in \cite{FuWo}, the quantum product    $[X_u]\star [X_v]$ never vanishes, leading to the characterization of
  the degree of $\mathbf{q}$  and particularly the minimal degree $\mathbf{d}_{\rm min}$ in such individual product   asked two decades ago.  The  characterization was   provided for complex Grassmannians therein,  for (co)minuscule Grassmannians in \cite{CMP, BuMi} and for Grassmannians of   types  B  and  C  in \cite{ShWi}. There has also been extended study in the context of quantum $K$-theory \cite{BCLM, BCMP}.  With this terminology,  $\mbox{e.d.}(G/P)$ is equivalent to the upper bound of the total degree of  all Schubert classes $[X_u]$ and  $[X_v]$ that satisfy $\mathbf{d}_{\rm min}=0$ in $[X_u]\star [X_v]$. As we will see, the viewpoint from quantum cohomology is particularly useful in the study of $\mbox{e.d.}(G/P)$ in the case of minuscule Grassmannians of classical Lie type.

Let $\mathcal{D}$ denote the Dynkin diagram  of $G$ with the set $\Delta$  of nodes, for which we take
 the  same numbering  as on page 58 of \cite{Hump}.
  Parabolic subgroups $P$ of $G$   that contain  a fixed Borel subgroup $B$  are in one-to-one correspondence with the subsets $\Delta_P$ of $\Delta$. Let $P_m$ be the maximal parabolic subgroup that corresponds  to the subset $\Delta\setminus\{\alpha_m\}$. By $\mathcal{D}(m)$ we mean the Grassmannian $G/P_m$ with $G$ being of type $\mathcal{D}$. Then  $\mbox{A}_n(m)$ is the complex Grassmannian $Gr(m, n+1)$, and $\mathcal{D}_m$ can  be realized as isotropic Grassmannians for other classical   types.
As a first main result, we obtain the following, where the known cases $\mbox{A}_n(m)$ and $G_2(m)$ are    included for completeness.
\begin{thm}\label{mainthm1}
   The  {effective  good divisibility} {\upshape $\mbox{e.d.}(\mathcal{D}(m))$}
   is  given in   Table \ref{tabed}.

  \begin{table}[h]
  \caption{Effective  good divisibility $\mbox{e.d.}$ of $\mathcal{D}(m)$}\label{tabed}
 { \upshape

\begin{tabular}{|c||p{0.9cm}|p{0.9cm}|p{0.9cm}|c|c|p{0.86cm}|p{0.86cm}|p{0.9cm}|}
     \hline \rule{0pt}{10pt}
     $\mathcal{D}$  &  &  &   & \multicolumn{2}{|c|}{ \raisebox{-0.0ex}[0pt]{$\mbox{D}_{n+1}(m)$}}  &\multicolumn{2}{|c|}{  {$\mbox{F}_{4}$}} &     \\  \cline{1-1}\cline{5-8}
     {$m$}   &  \centering{\raisebox{2ex}[0pt]{$\mbox{A}_n(m)$}}  & \centering{\raisebox{2ex}[0pt]{$\mbox{B}_n(m)$}}   & \centering{\raisebox{2ex}[0pt]{$\mbox{C}_n(m)$}}   &     $1, n, n+1 $ &$2\leq m<n$ &\hspace{0.05cm} $1,4$ & \hspace{0.050cm}  $2,3$&  {\centering{\raisebox{2ex}[0pt]{$\mbox{G}_2(m)$}}} \\ \hline \rule{0pt}{14pt}
     e.d. & \centering{$n$}  & \centering{$2n-1$}  & \centering{$2n-1$}  & $2n-1$  & $2n$ & \hspace{0.1cm}  $12$ & \hspace{0.1cm}  $14$ & \hspace{0.2cm}  $5$   \\ \hline 
       \end{tabular}
    \bigskip
       \begin{tabular}{|c||c|c|c|c|c|c|c|c|c|c|c|c|c|c|c|c|}
   \hline \rule{0pt}{12pt}
    $\mathcal{D}$ & \multicolumn{3}{|c|}{$\mbox{E}_6$}  &\multicolumn{5}{|c|}{$\mbox{E}_7$}   & \multicolumn{6}{|c|}{$\mbox{E}_8$} \\ \hline \rule{0pt}{12pt}
        $m$ & $1,6$ & $2,3,5$ &4  & 1 & 2,6 & 3 & 4,5  & 7 & 1& $2,3,5$ & 4 & 6 &7  & 8 \\ \hline \rule{0pt}{12pt}
      e.d.    &12 & 14 &15  & 22 & 23 & 24 & 25  & 19 & 46 & 50  & {51} &48 & 45  &40 \\
   \hline
 \end{tabular}
  }   \end{table}
\end{thm}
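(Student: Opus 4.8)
The plan is to turn the computation into a purely combinatorial question about pairs of Schubert classes and then settle it type by type. The first reduction rests on the nonnegativity of the Schubert structure constants of $H^*(G/P)$: no cancellation can occur in a product of two effective classes, so a product $x_i\cup x_j$ of nonzero effective classes vanishes if and only if $[X_u]\cup[X_v]=0$ for every Schubert class $[X_u]$, $[X_v]$ occurring in $x_i$, $x_j$ respectively. Hence $\mbox{e.d.}(G/P)$ is governed by the minimal total codimension $c_0$ of a vanishing cup product of two nontrivial Schubert classes, and $\mbox{e.d.}(G/P)=c_0-1$. I would then pass to the quantum reformulation recalled in the introduction: since $[X_u]\star[X_v]\neq 0$ by \cite{FuWo}, the vanishing $[X_u]\cup[X_v]=0$ is equivalent to $\mathbf{d}_{\rm min}(u,v)\geq 1$. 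The task thus becomes, for each $\mathcal{D}(m)$, to locate the smallest total codimension at which $\mathbf{d}_{\rm min}$ first becomes positive.

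For a prescribed value $N$ of Table \ref{tabed}, the proof splits into two inequalities. The inequality $\mbox{e.d.}(\mathcal{D}(m))\leq N$ only requires a single witness: a pair $([X_u],[X_v])$ with $\codim X_u+\codim X_v=N+1$ and $[X_u]\cup[X_v]=0$. I expect these critical pairs to be extremal Schubert classes, typically one class defined by a maximal isotropic (or linear) incidence condition together with a complementary class, for which $\mathbf{d}_{\rm min}(u,v)=1$ can be verified directly. The opposite inequality $\mbox{e.d.}(\mathcal{D}(m))\geq N$ is the substantial one: one must prove $[X_u]\cup[X_v]\neq 0$ for every pair with $\codim X_u+\codim X_v\leq N$. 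For this I would use the explicit indexing of Schubert classes, by partitions in a rectangle for $\mbox{A}_n(m)$ and by the appropriate $k$-strict or shifted partitions for the isotropic Grassmannians $\mbox{B}_n(m)$, $\mbox{C}_n(m)$, $\mbox{D}_{n+1}(m)$, and reduce nonvanishing of the classical product to a containment/compatibility condition on these data, checking that the condition cannot fail before total codimension $N+1$.

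In the (co)minuscule range, which covers $\mbox{A}_n(m)$, the cominuscule cases $\mathcal{D}(1)$, the Lagrangian and maximal orthogonal Grassmannians $\mbox{C}_n(n)$, $\mbox{D}_{n+1}(n)$, $\mbox{D}_{n+1}(n+1)$, and the exceptional minuscule spaces $\mbox{E}_6(1)$, $\mbox{E}_6(6)$, $\mbox{E}_7(7)$, I would run this lower bound through the closed formula for $\mathbf{d}_{\rm min}$ in terms of the minuscule poset from \cite{CMP,BuMi,ShWi}; this makes the threshold $N$ transparent and is exactly where, as advertised in the introduction, the quantum viewpoint does the work. For the remaining maximal parabolics, and in particular every exceptional node in the last two tables, no such uniform formula is available, so I would compute $\mathbf{d}_{\rm min}$ (or, equivalently, the relevant classical Chevalley and Pieri products) node by node, arranging matters so that only the finitely many boundary pairs with total codimension near $N$ must be inspected.

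The main obstacle is precisely this reverse inequality outside the (co)minuscule range. For the interior nodes $2\leq m<n$ of types $B$, $C$ and especially $D$ --- where the value of $\mbox{e.d.}$ itself depends on the node, reflecting the special geometry of the quadric and of the spinor Grassmannians --- establishing $\mathbf{d}_{\rm min}(u,v)=0$ for all admissible pairs at once calls for a uniform estimate rather than a finite case check, and I expect the bulk of the technical work to concentrate there. The exceptional types reduce to finite computations, but still demand systematic bookkeeping of the Schubert data to pin down the exact constants recorded in Table \ref{tabed}.
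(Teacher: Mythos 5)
Your overall architecture --- reduce to pairs of Schubert classes by positivity, prove a sharp upper bound by exhibiting one vanishing product and a lower bound by a uniform nonvanishing statement, and use the quantum reformulation $[X_u]\cup[X_v]=0\Leftrightarrow \mathbf{d}_{\rm min}(u,v)\geq 1$ --- is exactly the skeleton of the paper's argument, and your treatment of the minuscule range is essentially the paper's (there one does not even need a closed formula for $\mathbf{d}_{\rm min}$: since $\deg q=\mathrm{h}(\mathcal{D})$ and $[X_u]\star[X_v]\neq 0$, any pair with $\ell(u)+\ell(v)<\mathrm{h}(\mathcal{D})$ automatically has nonzero classical product). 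But the proposal has a genuine gap precisely where you yourself locate ``the bulk of the technical work'': for the infinite families $\mbox{B}_n(m)$ with $m<n$ and $\mbox{D}_{n+1}(m)$ with $2\leq m<n$ you state that a ``uniform estimate'' is needed and do not supply one, and no formula for $\mathbf{d}_{\rm min}$ of the type you invoke is available outside the (co)minuscule range. This is not a routine verification: the paper's Propositions \ref{thmtypeB} and \ref{thmtypeD} convert the Bruhat comparison $\mathscr{P}^\vee(\lambda)\preceq\mathscr{P}(\mu)$ into the inequality $f(\lambda,m+1-j)+f(\mu,j)\leq n+k-\lambda_{m+1-j}-\mu_j$ for the big-pair counting function $f$ of \eqref{defflambda}, and proving it under $|\lambda|+|\mu|<2n$ (resp.\ $<2n+1$) requires a delicate optimization of an explicit quadratic function $I(j,a,\mu_j)$ over integer parameters, plus the extra type-$\mbox{D}$ bookkeeping of the types $t\in\{0,1,2\}$ and the exceptional index values $n+1,n+2$ in Proposition \ref{bruhatD}. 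None of this is forced by your setup, and without it the values $2n-1$ and $2n$ in Table \ref{tabed} are not established.

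Two smaller points. First, your plan to handle the exceptional nodes ``node by node'' by computing $\mathbf{d}_{\rm min}$ or Chevalley/Pieri products is underspecified: for $\mbox{E}_7$ and $\mbox{E}_8$ one must both exhibit an explicit incomparable pair $(u,w)$ at total degree $L=\mbox{e.d.}+1$ and verify comparability of \emph{all} pairs below that degree (the paper does this by machine, reducing via Proposition \ref{propnonvanishing} to Bruhat-order checks on $W^{P_m}$); asserting that these reduce to ``finite computations'' does not produce the constants $22,\dots,51$. Second, your suggestion to treat $\mbox{C}_n(n)$ through a cominuscule $\mathbf{d}_{\rm min}$ formula is workable in principle (via \cite{CMP,ShWi}) but note that $\deg q=n+1$ there, not the Coxeter number, so the easy minuscule degree argument does not apply; the paper instead deduces all of type $\mbox{C}_n(m)$ from type $\mbox{B}_n(m)$ using the fact that the corresponding structure constants differ only by powers of $2$, which is both shorter and avoids the quantum formula entirely.
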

\vspace{-0.6cm}

\noindent The above theorem will be proved for classical types in section 3.4 and for exceptional   types in section 4.   As from Table \ref{tabed}, the effective good divisibility of Grassmannians $G/P_m$   may equal each other for distinct $m$. For instance,  $\mbox{e.d.}(\mbox{E}_7(2))=\mbox{e.d.}(\mbox{E}_7(6))=23$;  $\mbox{e.d.}(\mathcal{D}(m))$ is   independent of $m$, whenever  $\mathcal{D}$ is of type A, B, C or $\mbox{G}_2$.

The nonvanishing of $[X_u]\cup[X_v]$ in $H^*(G/P)$ is equivalent to the nonempty intersection of Schubert variety $X_u$ and another Schubert variety $X^{v^\sharp}$   opposite to $X_v$.  This is  further equivalent to the property $u\leq v^{\sharp}$ with respect to the Bruhat order. Then we can reduce it to the Bruhat order among Grassmannian permutations. Such reduction  is well known  for type Lie A (see for instance \cite[ exercise 8 on page 175]{Fult}). The reduction for general Lie type should have also been known  to the experts, while we treat it as a special case of \cite[Theorem 5]{BCLM}, for not being aware of a precise reference elsewhere. As a consequence,
we obtain the following.
\begin{thm}\label{mainthm2}
   The  {effective  good divisibility} of $G/P$ of any  type $\mathcal{D}$ is given by
   {\upshape $$\mbox{e.d.}(G/P)=\min\{  \mbox{e.d.}(\mathcal{D}(m))\mid m\in \Delta\setminus \Delta_P\}.  $$}
  \end{thm}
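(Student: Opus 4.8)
The plan is to recast the statement as a combinatorial identity about the minimal total codimension of a vanishing product of Schubert classes, and then to compare $G/P$ with each Grassmannian projection $\mathcal{D}(m)$ one node at a time. Write $S=\Delta\setminus\Delta_P$, so that $P=\bigcap_{m\in S}P_m$ and for each $m\in S$ there is a fibration $\pi_m\colon G/P\to \mathcal{D}(m)=G/P_m$, inducing a projection $\pi_m\colon W^P\to W^{P_m}$. First I would note that $\mbox{e.d.}$ is computed entirely on the Schubert basis: every effective class is a nonnegative combination of Schubert classes and the structure constants of $H^*(G/P)$ in this basis are nonnegative, so a product of two nonzero effective classes vanishes if and only if some product $[X_u]\cup[X_v]$ of Schubert classes of the same respective codimensions vanishes. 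Hence, setting $N(X):=\min\{\codim X_u+\codim X_v:[X_u]\cup[X_v]=0\}$, one has $\mbox{e.d.}(X)+1=N(X)$ for each of the varieties in question, and the theorem reduces to proving $N(G/P)=\min_{m\in S}N(\mathcal{D}(m))$.

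Next I would rewrite the vanishing condition through the Bruhat order. By the criterion recalled before the theorem, $[X_u]\cup[X_v]\neq 0$ in $H^*(G/P)$ is equivalent to $u\le v^\sharp$, where $v^\sharp=w_0\,v\,w_{0,P}$ is the order-reversing involution on $W^P$ dual to $v$. The crucial input is a reduction of this comparison to the Grassmannian quotients: for $u,v\in W^P$,
$$u\le v^\sharp \quad\Longleftrightarrow\quad \pi_m(u)\le (\pi_m v)^{\sharp_m}\ \text{ for every } m\in S,$$
where $\sharp_m$ is the corresponding involution on $W^{P_m}$. I would obtain this by combining \cite[Theorem 5]{BCLM}, which reduces the Bruhat order on $W^P$ to the orders on the maximal quotients $W^{P_m}$, with the compatibility $\pi_m(v^\sharp)=(\pi_m v)^{\sharp_m}$; the latter I would check directly from $v^\sharp W_{P_m}=w_0\,\pi_m(v)\,W_{P_m}$ together with the fact that left multiplication by $w_0$ sends the minimal representative of a coset to $w_0$ times its maximal representative, so that $\pi_m(v^\sharp)=w_0\,\pi_m(v)\,w_{0,P_m}$. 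Equivalently, $[X_u]\cup[X_v]=0$ in $H^*(G/P)$ if and only if $[X_{\pi_m u}]\cup[X_{\pi_m v}]=0$ in $H^*(\mathcal{D}(m))$ for some $m\in S$.

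With this equivalence both inequalities are short. For $N(G/P)\ge \min_m N(\mathcal{D}(m))$, take a minimizing pair $(u,v)$ for $N(G/P)$ and a node $m\in S$ witnessing the vanishing; then $[X_{\pi_m u}]\cup[X_{\pi_m v}]=0$ in $\mathcal{D}(m)$, whence $N(\mathcal{D}(m))\le \codim X_{\pi_m u}+\codim X_{\pi_m v}\le \codim X_u+\codim X_v=N(G/P)$, using that $\pi_m$ does not increase length. For the reverse inequality, fix $m\in S$ and a minimizing pair $(u',v')$ for $N(\mathcal{D}(m))$; since $W_P\subseteq W_{P_m}$ gives $W^{P_m}\subseteq W^P$ and $\pi_m$ restricts to the identity on $W^{P_m}$, the same elements viewed in $W^P$ satisfy $[X_{u'}]\cup[X_{v'}]=0$ in $H^*(G/P)$, so $N(G/P)\le \codim X_{u'}+\codim X_{v'}=N(\mathcal{D}(m))$. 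As $m$ is arbitrary this yields $N(G/P)\le \min_m N(\mathcal{D}(m))$, and the two bounds give equality; subtracting $1$ proves the theorem.

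The main obstacle is the displayed equivalence, namely the reduction of the single comparison on $W^P$ to the family of comparisons on the Grassmannian quotients $W^{P_m}$ and the compatibility of the duality $\sharp$ with the projections $\pi_m$, established uniformly in all Lie types. Once these are in place, the two elementary facts $\codim X_{\pi_m w}\le \codim X_w$ and $\pi_m|_{W^{P_m}}=\mathrm{id}$ are exactly what align the two minima, so both directions become essentially formal.
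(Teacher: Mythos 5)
Your proposal is correct and follows essentially the same route as the paper: both rest on the Bruhat-order criterion for nonvanishing of Schubert products (Proposition~\ref{propnonvanishing}) combined with the reduction of the Bruhat order on $W^P$ to the maximal parabolic quotients (Proposition~\ref{bruhatGP}, i.e.\ \cite[Theorem 5]{BCLM}). Your compatibility $\pi_m(v^\sharp)=(\pi_m v)^{\sharp_m}$ is a cleaner repackaging of the length computation $\ell(w_0 w_i w_{P_{j_i}})\le \ell(v)$ that the paper carries out explicitly, but the substance is identical.
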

 \noindent  In particular  for any  complete flag manifold $G/B$,  the next table of {\upshape $\mbox{e.d.}(G/B)$} follows immediately from the above theorem and Table \ref{tabed}.  Therein we provide the Coxeter number ${\rm h}(\mathcal{D})$, and can see that    $\mbox{e.d.}(G/B)={\rm h}(\mathcal{D})-1$ if and only if $G$ is of either classical Lie type or type G.

 \begin{table}[h]
  \caption{Effective  good divisibility of $G/B$ of type $\mathcal{D}$}\label{tabed22}
 { \upshape

\begin{tabular}{|c||c|c|c|c|p{0.7cm} |p{0.7cm}|p{0.7cm}|p{0.7cm}|p{0.6cm}|}
  \hline \rule{0pt}{12pt}
  $\mathcal{D}$ &$\mbox{A}_n$ & $\mbox{B}_n$ & $\mbox{C}_n$ & $\mbox{D}_{n+1}$ & \hspace{0.2cm}$\mbox{E}_6$ & \hspace{0.10cm} $\mbox{E}_7$ &  \hspace{0.2cm}$\mbox{E}_8$ & \hspace{0.1cm} $\mbox{F}_4$ &     $\mbox{G}_2$   \\ \hline  \rule{0pt}{12pt}
 $\mbox{e.d.}(G/B)$ & $n$ & $2n-1$ & $2n-1$ & $2n-1$ &\hspace{0.1cm} $12$ & \centering{ $19$ } & \centering{ $40$} &\centering{ $12$} &  $5$   \\ \hline \rule{0pt}{12pt}
   ${\rm h}(\mathcal{D})$ &\centering{ $n+1$}  & \centering{$2n$}  & \centering{$2n$}  &  {$2n$}    &\hspace{0.1cm} $12$ & \centering{ $18$} & \centering{$30$} & \centering{$12$} &  $6$  \\
  \hline
\end{tabular}
  }   \end{table}

\begin{remark}
In   \cite{MOS22}, Mu\~{n}oz, Occhetta and Sol\'{a} Conde have also obtained {\upshape $\mbox{e.d}(G/P)$} when $G$ is of classical type.
They started with  a complete flag manifold $G/B$. 
They made use of a beautiful induction on the rank of $G$  and computer computations  for some rational homogeneous varieties with low rank automorphism groups, with key ingredients including  Stumbo's description \cite{Stum} of certain elements in the Weyl group. Then {\upshape $\mbox{e.d.}(G/P)$} was obtained, due to a minor gap between {\upshape $\mbox{e.d.}(G/B)$} and an obvious upper bound of {\upshape $\mbox{e.d.}(\mathcal{D}(m))$}.
They also checked   {\upshape $\mbox{e.d.}(G/B)$}    for  types $\mbox{G}_2$, $\mbox{F}_4$ and $\mbox{E}_6$, while the cases for types $\mbox{E}_7$ and $\mbox{E}_8$ were computationally out of reach.

We are taking a completely different approach by   starting with Grassmannians $G/P_m$ of classical   type, which parameterize  isotropic  vector subspaces  of $\mathbb{C}^N$.   For such Grassmannians, the Schubert varieties $X^u$ are not only labeled by elements $u$ in a subset of the Weyl group of $G$, but also equivalently by
   other  combinatorial objects, including $k$-strict partitions and index sets \cite{BKT} (called also Schubert symbols in \cite{Ravi}).
The Bruhat order among index sets becomes much easier to study, with the price of complicated   dimension counting.
 We achieve Theorem \ref{mainthm1} by carefully analyzing the cardinality of relevant combinatorial sets for classical  type, and by directly comparing the Bruhat order among Grassmannian permutations for exceptional  type  with the help of  computer computations. Then we carry out all  {\upshape $\mbox{e.d.}(G/P)$} in Theorem \ref{mainthm2} using a reduction of Bruhat orders as mentioned above.
\end{remark}

In any category, it is important to study the morphisms between objects. One question of interest is to find   sufficient conditions for a morphism to be constant.
The answer is  well-known for morphisms $\mathbb{P}^r \to \mathbb{P}^n=Gr(1, n+1)$ of (smooth) projective varieites, which must be constant whenever  $r > n$.
 The same statement with $\mathbb{P}^n$ replaced by general complex Grassmannian $ Gr(m, n+1)$  was obtained by  Tango \cite[Corollary 3.2]{Tango},
    by studying the pullback of Chern classes of tautological vector bundles.
With the same idea,  Naldi and Occhetta \cite{NaOc} extended the result to morphism $M\to Gr(m, n+1)$ from smooth complex projective variety $M$, with the concept of  effective good divisibility $\mbox{e.d.}(M)$.
\begin{thm}\label{mainthm3} Let $G/P$ be of classical Lie type, and $M$ be a connected complex projective manifold. If {\upshape $\mbox{e.d.}(M) > \mbox{e.d.}(G/P)$}, then
any morphism $M\to G/P$ is constant.
\end{thm}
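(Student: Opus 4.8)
I would argue by contraposition, showing that the existence of a nonconstant morphism $f\colon M\to G/P$ forces $\mbox{e.d.}(M)\le \mbox{e.d.}(G/P)$. Two standard facts are the starting point. Since $G/P$ is homogeneous, Kleiman's transversality theorem allows me to replace any Schubert variety $X_w$ by a general translate $gX_w$ meeting $f(M)$ properly, so that $f^{-1}(gX_w)$ represents $f^*[X_w]$; thus $f^*$ carries effective classes to effective classes and is a ring homomorphism. Moreover, writing $\sigma_1=c_1(\cO_{G/P}(1))$ for the ample generator, nonconstancy of $f$ gives $f^*\sigma_1\ne 0$, since $f^*\cO_{G/P}(1)$ has strictly positive degree on any curve $C\subset M$ on which $f$ is nonconstant.

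Next I would reduce to the case of a maximal parabolic. Using the closed embedding $G/P\hookrightarrow \prod_{m\in \Delta\setminus\Delta_P}\mathcal{D}(m)$ induced by the projections $\pi_m\colon G/P\to G/P_m$, the morphism $f$ is constant if and only if every $\pi_m\circ f$ is; hence if $f$ is nonconstant then $\pi_m\circ f\colon M\to \mathcal{D}(m)$ is nonconstant for some $m$. It therefore suffices to prove the statement for the Grassmannians $\mathcal{D}(m)$, and then to recover the minimum in Theorem \ref{mainthm2} by inducting on the marked nodes $\Delta\setminus\Delta_P$: if the projection to a minimizing $\mathcal{D}(m^*)$ happens to be contracted, then $f$ factors through a fibre of $G/P\to G/P_{m^*}$, itself a rational homogeneous variety of strictly smaller effective good divisibility, and one iterates.

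The heart is the maximal-parabolic case, which I would handle in the spirit of Tango and Naldi--Occhetta by pulling back Chern classes of tautological bundles. On $\mathcal{D}(m)$ the tautological sub- and quotient bundles $S,Q$ pull back to globally generated bundles $E=f^*S$ and $F=f^*Q$ on $M$ sitting in $0\to E\to \cO_M^{\oplus N}\to F\to 0$, whence $c(E)c(F)=1$. The cohomology of $\mathcal{D}(m)$ is generated by the special Schubert classes $c_i(Q)$ and $c_j(S^*)$ (together with the isotropic relations, and the Pfaffian half-classes in type $\mbox{D}$), and from the proof of Theorem \ref{mainthm1} the value $\mbox{e.d.}(\mathcal{D}(m))=:d$ is realized by the vanishing $\sigma_u\cup\sigma_v=0$ of an explicit product of two such special classes with $|u|+|v|=d+1$; for $Gr(m,n+1)$ this is $c_{n+1-m}(Q)\cup c_m(S^*)=0$, the product of the two top Chern classes.

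Pulling back these relations along the ring homomorphism $f^*$, I obtain identities among effective classes on $M$. Whenever such an identity forces a product of two effective classes to vanish in total degree $\le d+1\le \mbox{e.d.}(M)$, the hypothesis $\mbox{e.d.}(M)>d$ forces one factor to vanish. Beginning with $c_{\mathrm{top}}(F)\cup c_{\mathrm{top}}(E^*)=0$ and peeling downward through the Whitney relations, I expect to propagate these vanishings until reaching $c_1(F)=f^*\sigma_1$, contradicting $f^*\sigma_1\ne 0$. This propagation is the main obstacle: a single top Chern class of a globally generated bundle may genuinely vanish, so one must run the peeling symmetrically in $F$ and $E^*$ and, in the degenerate case where both top Chern classes vanish, use a nowhere-vanishing section to split off a trivial summand, lower the rank, and repeat. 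Making this uniform across types $\mbox{B}$, $\mbox{C}$, $\mbox{D}$ (with their extra relations and half-classes), together with the fibration induction for non-maximal $P$, is where the genuine work lies.
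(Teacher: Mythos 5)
Your architecture is essentially the paper's: reduce via Theorem \ref{mainthm2} to a Grassmannian $\mathcal{D}(m^*)$ realizing the minimum, pass to fibres and iterate when $\pi_{m^*}\circ f$ is contracted, and on a Grassmannian pull back the tautological sequence and conclude with the ampleness of $\det\mathcal{S}^\vee$ or $\det\mathcal{Q}$. The ``propagation'' you single out as the main obstacle is actually routine and needs no splitting off of trivial summands: let $a$ be maximal with $c_a(f^*\mathcal{S})\neq 0$ and $b$ maximal with $c_b(f^*\mathcal{Q})\neq 0$; the degree-$(a+b)$ component of $c(f^*\mathcal{S})\cup c(f^*\mathcal{Q})=1$ collapses to the single term $c_a(f^*\mathcal{S})\cup c_b(f^*\mathcal{Q})=0$, and since $c_{n+k+1}(\mathcal{Q})=0$ already on $\mathcal{D}(m)$ one has $a+b\leq m+(n+k)=\mbox{e.d.}(\mathcal{D}(m))+1\leq\mbox{e.d.}(M)$, so effectivity kills all the $c_i(f^*\mathcal{S})$ or all the $c_j(f^*\mathcal{Q})$, in particular the first Chern class of the pullback of an ample line bundle. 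This is exactly what the paper's one-line ``by induction'' in Proposition \ref{morphismprop} means.

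The genuine gap is the even-dimensional quadric $\mbox{D}_{n+1}(1)=Q_{2n}$, which your scheme cannot reach and which the paper treats by an entirely different argument (Proposition \ref{morphismQ}, following \cite[Corollary 5.3]{MOS22}). Here $\mbox{e.d.}(Q_{2n})=2n-1$, but the sharp vanishing in degree $2n$ is the product of the two middle-dimensional Schubert classes, which are \emph{not} Chern classes of $\mathcal{S}$ or $\mathcal{Q}$: since $\mathcal{S}=\mathcal{O}(-1)$, one has $c(\mathcal{Q})=(1-h)^{-1}=1+h+h^2+\cdots$, every $c_j(\mathcal{Q})$ is a power of the hyperplane class, and $c_1(\mathcal{S}^\vee)\cup c_{2n-1}(\mathcal{Q})=h^{2n}=2[\mathrm{pt}]\neq 0$; the only vanishing product of tautological Chern classes sits in degree $2n+1>\dim Q_{2n}$ and carries no information. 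So the Whitney peeling never produces a vanishing product in degree $\leq \mbox{e.d.}(M)$ when $\mbox{e.d.}(M)=2n$, and the contradiction with $f^*\sigma_1\neq 0$ is never reached. The paper's substitute: with $u=s_n\cdots s_1$ and $w=s_{n+1}s_{n-1}\cdots s_1$ one has $[X_u]=[X^w]$ and $[X_u]\cup[X^w]=0$, so $f^*[X_u]$ is an effective class of degree $n$ with vanishing square; $\mbox{e.d.}(M)\geq 2n$ forces $f^*[X_u]=0$, hence $f$ is not surjective, and composing with a linear projection from $\mathbb{P}^{2n+1}$ yields a surjection onto $\mathbb{P}^{\dim f(M)}$, whence $\mbox{e.d.}(M)>\dim f(M)$ forces $\dim f(M)=0$. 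Without this (or some replacement), your proof misses every $G/P$ of type $\mbox{D}$ for which $\alpha_1$ is a minimizing node, e.g.\ $Q_{2n}$ itself.
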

\noindent In \cite{MOS22},  Mu\~{n}oz, Occhetta and Sol\'{a} Conde provided a slightly more general application by assuming $M$ to be a complex projective variety (together with a straightforward extension of   $\mbox{e.d.}(M)$ in terms of nonvaninshing of products in the Chow ring of $M$).
 On the one hand, we reduce Theorem \ref{mainthm3} to the case when $G/P$ is a Grassmannian as in Proposition \ref{morphismprop}. Our proof  is mainly a straightforward generalization of that for $M\to Gr(m, n+1)$ in \cite{Tango, MOS20, NaOc}. It
  can   be directly extended to  the same setting for $M$ in \cite{MOS22} (see  {Remark } \ref{rmkmorphism}), and the two approaches are different.
 On the other hand, it seems difficult to find nice vector bundles over an even-dimensional quadric with expected properties. The proof for the exception is due to \cite[Corollary 5.3]{MOS22}.
As a   corollary of Theorems \ref{mainthm1}, \ref{mainthm2} and \ref{mainthm3}, the application \cite[Corollary 1.4]{MOS22} can be extended to $G$ of arbitrary type  as follows.
Therein the  hypothesis parabolic $Q$ in type $\mbox{E}_7$ (resp. $\mbox{E}_8$) is equivalent to $\emptyset\neq \Delta\setminus \Delta_Q \neq \{\alpha_7\}$ (resp. $\emptyset\neq \Delta\setminus \Delta_Q \not\subseteq \{\alpha_7, \alpha_8\}$).
We expect the additional assumption in types  $\mbox{E}_7$ and  $\mbox{E}_8$ to be removable with a better understanding of the relevant rational homogeneous varieties.
\begin{cor}\label{corsubdiag}
   Let   $P, \bar P,  Q$ be proper parabolic subgroups of $G$  with $Q$ containing   $\bar P$. Then
    there does   not exist any non-constant morphism $G/P\to Q/\bar P$, provided
      $Q\neq P_7$ if $G$ is of type  {\upshape $\mbox{E}_7$}, or  $Q\not \supseteq P_7\cap P_8$ if $G$ is of type {\upshape $\mbox{E}_8$}.
\end{cor}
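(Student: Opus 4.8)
The plan is to derive the statement by combining Theorems~\ref{mainthm1}, \ref{mainthm2} and \ref{mainthm3}. The first thing I would do is identify the target. Since $\bar P\subseteq Q$, the quotient $Q/\bar P$ is the flag variety of the Levi subgroup of $Q$, and hence splits as a product $\prod_{\mathcal C}Y_{\mathcal C}$ of rational homogeneous varieties, one factor $Y_{\mathcal C}=G_{\mathcal C}/P_{\mathcal C}$ for each connected component $\mathcal C$ of the proper subdiagram $\Delta_Q\subsetneq\Delta$ (a component carrying no marked node of $\bar P$ simply contributes a point). The key bookkeeping is that the hypotheses on $Q$ say exactly that no component $\mathcal C$ is of exceptional type: in type $\mbox{E}_7$ the unique exceptional connected proper subdiagram is $\mbox{E}_6=\{\alpha_1,\dots,\alpha_6\}$, which occurs precisely when $Q=P_7$; in type $\mbox{E}_8$ the exceptional connected subdiagrams are $\mbox{E}_6=\{\alpha_1,\dots,\alpha_6\}$ and $\mbox{E}_7=\{\alpha_1,\dots,\alpha_7\}$, and a component of one of these types appears precisely when $\Delta_Q\supseteq\{\alpha_1,\dots,\alpha_6\}$, i.e. when $Q\supseteq P_7\cap P_8$; for every other $\mathcal D$ all proper subdiagrams are already classical. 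Thus, under the stated assumptions, each $Y_{\mathcal C}$ is of classical Lie type.

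Next I would pass to a single factor. A morphism $f\colon G/P\to\prod_{\mathcal C}Y_{\mathcal C}$ is constant as soon as each composite $f_{\mathcal C}=\mathrm{pr}_{\mathcal C}\circ f\colon G/P\to Y_{\mathcal C}$ is constant, so it suffices to treat the classical factors one at a time. As $G/P$ is a connected projective manifold and $Y_{\mathcal C}$ is classical, Theorem~\ref{mainthm3} forces $f_{\mathcal C}$ to be constant provided $\mbox{e.d.}(G/P)>\mbox{e.d.}(Y_{\mathcal C})$. By Theorem~\ref{mainthm2}, $\mbox{e.d.}(G/P)=\min_{m\in\Delta\setminus\Delta_P}\mbox{e.d.}(\mathcal D(m))\ge\min_{m\in\Delta}\mbox{e.d.}(\mathcal D(m))$, while $\mbox{e.d.}(Y_{\mathcal C})\le\max_m\mbox{e.d.}(\mathcal C(m))$ with $\mathcal C(m)$ ranging over the Grassmannians of the classical diagram $\mathcal C$. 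The whole corollary therefore reduces to the combinatorial inequality $\max_m\mbox{e.d.}(\mathcal C(m))<\min_{m\in\Delta}\mbox{e.d.}(\mathcal D(m))$ for every connected classical proper subdiagram $\mathcal C\subsetneq\mathcal D$.

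Finally I would read this inequality off Table~\ref{tabed}. For a classical diagram of rank $r$ one has $\max_m\mbox{e.d.}=r$ in type $\mbox{A}_r$, $=2r-1$ in types $\mbox{B}_r$ and $\mbox{C}_r$, and $=2r-2$ in type $\mbox{D}_r$; since $\mathcal C$ is proper its rank is at most $\mathrm{rank}(\mathcal D)-1$, and a short check over the largest possibilities (e.g.\ $\mbox{A}_{n-1}\subset\mbox{A}_n$ giving $n-1<n$, $\mbox{D}_n\subset\mbox{D}_{n+1}$ giving $2n-2<2n-1$, $\mbox{B}_{n-1}\subset\mbox{B}_n$, and $\mbox{D}_6\subset\mbox{E}_7$, $\mbox{D}_7\subset\mbox{E}_8$, $\mbox{D}_5\subset\mbox{E}_6$, $\mbox{B}_3\subset\mbox{F}_4$) confirms the strict inequality in every case. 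I expect the only genuine subtlety to be the role of the exceptional hypotheses: they are forced not by the numerical inequality---which in fact still holds, since $\mbox{e.d.}$ of an $\mbox{E}_6$- or $\mbox{E}_7$-flag is at most $15$ resp.\ $25$, below the relevant source values $19$ and $40$---but solely because Theorem~\ref{mainthm3} is available only for classical targets; once an $\mbox{E}_6$ or $\mbox{E}_7$ factor appears, the present method has no tool to control morphisms into it, which is exactly why those cases are excluded.
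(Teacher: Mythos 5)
Your proposal is correct and follows essentially the same route as the paper: decompose $Q/\bar P$ as a product of rational homogeneous varieties attached to the connected components of $\Delta_Q$, observe that the stated hypotheses on $Q$ are exactly the condition that every component is classical, and then apply Theorem~\ref{mainthm3} factor by factor after checking $\mbox{e.d.}(G/P)>\mbox{e.d.}(Y_{\mathcal C})$ via Theorems~\ref{mainthm1} and~\ref{mainthm2}. The only difference is that you spell out the numerical comparison of effective good divisibilities (and the identification of which $Q$ produce exceptional components), which the paper leaves implicit.
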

\noindent The Dynkin diagram of  $\Delta_Q$ consists of $r$ connected components of type $\mathcal{D}^{(1)}, \cdots, \mathcal{D}^{(r)}$ respectively.
Then  $Q/\bar P$ is isomorphic to  a product $X_1\times \cdots \times X_r$ with $X_i$ being  a  rational homogeneous variety (possibly a point)   of type $\mathcal{D}^{(i)}$.
 The  hypothesis in the above corollary   is equivalent to that all $\mathcal{D}^{(i)}$'s are of classical type.

The paper is organized as follows. In section 2, we review basic facts on rational homogeneous varieties and prove Theorem \ref{mainthm2}. In section 3, we  prove Theorem \ref{mainthm1} for  Grassmannians of classical  type. In section 4, we provide the computational results for Grassmannians of   exceptional   type.  Finally in section 5, we prove Theorem \ref{mainthm3} as an application.

\subsection*{Acknowledgements}
The authors thank Pierre-Emmanuel Chaput, Haibao Duan,   Jianxun Hu, Xiaowen Hu, Hua-Zhong Ke, Naichung Conan Leung,  Leonardo Constantin Mihalcea, Lei Song and Heng Xie   for helpful discussions. The authors are extremely grateful to the  anomynous referee for the very careful reading  and the quite valuable comments.  C. Li  is  supported by NSFC Grants 12271529 and 11831017.

\section{Reduction of effective good divisibility of $G/P$}
The notion of \textit{effective  good divisibility}   was introduced in \cite{MOS20}, refining the notion of good divisibility \cite{Pan}. In this section, we reduce the effective good divisibility of a rational homogeneous variety to that of Grassmannians. Namely we prove Theorem \ref{mainthm2} first, which is completely independent of Theorem \ref{mainthm1}.

\subsection{Rational homogeneous varieties} We review some well-known facts on rational homogeneous varieties, and refer to \cite{Hump75, Brion} for more details.

Let $G$ be a connected simple complex Lie group,  $B$ a Borel subgroup of $G$  and  $B^-$ the Borel subgroup  opposite to $B$. Then $T=B\cap B^-$ is a maximal torus.
 Let $R$ denote the root system of $G$  determined by $(G, T)$, with a base  $\Delta$ of simple roots and the subset $R^+\supset \Delta$ of  positive
roots.
Then parabolic subgroups $P$
 of $G$ that contain $B$ are in one-to-one correspondence with the subsets $\Delta_P\subset \Delta$.
 The quotient $G/P$ is a rational homogenous variety, called also a flag variety.
 We take the same numbering of the simple roots $\alpha_i$ as in the Dynkin diagram $\mathcal{D}$ on \cite[page 58]{Hump}.
In particular, $G/P_m$ is called a Grassmannian, when the parabolic subgroup  $P_m$ corresponds to the subset $\Delta_{P_m}=\Delta\setminus \{\alpha_m\}$.

The Weyl group  $W=N_G(T)/T$  of  $G$ is generated by  simple reflections $\{s_\alpha \mid \alpha\in \Delta\}$, where $N_G(T)$ is the normalizer of $T$ in $G$.
  Let $W_P$ be the Weyl subgroup  generated by the simple reflections $\{s_\alpha \mid \alpha\in \Delta_P\}$.
   There is a unique element of minimal length in each coset in $W/W_P$ with respect to the standard length function   $\ell: W\to \mathbb{Z}_{\geq 0}$.
    Such minimal length representatives  form a subset $W^P\subset W$ bijective to $W/W_P$. Then any element $w\in W$ has a unique factorization $w=w_1w_2$ with $w_1\in W^P$ and $w_2\in W_P$, implying $\ell(w)=\ell(w_1)+\ell(w_2)$. Let $w_0$ (resp. $w_P$) denote the (unique) longest element in $W$ (resp. $W_P$).

 There are  Bruhat decompositions of the flag variety $G/P$ into affine spaces: $G/P=\bigsqcup_{w\in W^P}BwP/P=\bigsqcup_{u\in W^P}B^-uP/P$.  The Zariski closures   $X^w = \overline{BwP/P}$ (resp. $X_u = \overline{B^-uP/P}$) are (opposite) Schubert varieties of dimension   $\ell(w)$ (resp. codimension  $\ell(u)$).
 It follows that
        $$H^*(G/P)=H^*(G/P, \mathbb{Z})=\bigoplus_{u\in W^P}\mathbb{Z}[X_u]$$
has a $\mathbb{Z}$-basis of Schubert cohomology classes $[X_u]\in H^{2\ell(u)}(G/P, \mathbb{Z})$ (the Poincar\'e dual of the homology class of $X_u$).
There is a partial order $\leq$ on $W^P$, called the Bruhat order, with $u\leq w$ defined by any one of the following equivalent properties
\begin{enumerate}
  \item[i)] $X^u\subseteq X^w$;
  \item[ii)] $X_u\supseteq X_w$;
  \item[iii)] a reduced decomposition of $u$ can be obtained from  some substring of some   reduced decomposition of   $w$.
\end{enumerate}

\noindent The (transversal) intersection $X^w\cap X_u$ is   an irreducible and reduced closed subvariety of dimension  $\ell(w)-\ell(u)$ if $u\leq w$, or an  empty set otherwise.
It follows that
  \begin{equation}\label{Schubertdual}
   [X^w]\cup [X_u]=[X^w\cap X_u],\qquad   \int_X [X^w]\cup [X_u]=\delta_{w, u}.
  \end{equation} For any $v\in W^P$, (opposite) Schubert varieties are related by $X_v=w_0X^w$ with  $w=w_0vw_P\in W^P$. This implies the following well-known fact.
\begin{prop}\label{propnonvanishing}
  For any $u, v\in W^P$, $[X_u]\cup [X_v]\neq 0$ if and only if $u\leq w_0vw_P$.
\end{prop}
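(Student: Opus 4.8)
The plan is to reduce the cup product $[X_u]\cup[X_v]$ to an intersection computation governed by \eqref{Schubertdual}, using the displayed relation $X_v=w_0X^w$ with $w=w_0vw_P\in W^P$.

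First I would pass from the opposite Schubert class $[X_v]$ to the Schubert class $[X^w]$. The identity $X_v=w_0X^w$ exhibits $X_v$ as the image of $X^w$ under left translation by a representative of $w_0$ in $N_G(T)\subset G$. Since $G$ is connected, this translation is an automorphism of $G/P$ homotopic to the identity, hence it acts trivially on $H^*(G/P)$. Consequently $[X_v]=[X^w]$ in $H^*(G/P)$, and therefore $[X_u]\cup[X_v]=[X_u]\cup[X^w]$.

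Next I would invoke the intersection formula in \eqref{Schubertdual}, which gives $[X^w]\cup[X_u]=[X^w\cap X_u]$. The transversality statement preceding \eqref{Schubertdual} asserts that $X^w\cap X_u$ is a nonempty irreducible reduced subvariety of dimension $\ell(w)-\ell(u)$ precisely when $u\leq w$, and is empty otherwise. When the intersection is nonempty its fundamental class is nonzero, since it pairs positively against a suitable power of an ample class, so $[X^w\cap X_u]\neq 0$; when it is empty the class is $0$. Combining these observations yields that $[X_u]\cup[X_v]\neq 0$ if and only if $X^w\cap X_u\neq\emptyset$, if and only if $u\leq w=w_0vw_P$, which is exactly the claim.

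The computation is essentially immediate once these two inputs are in place, so I do not expect a serious obstacle; the one point demanding care is the first step, namely justifying that translation by $w_0$ induces the identity on cohomology, so that $[X_v]$ may legitimately be replaced by $[X^w]$. This rests on the connectedness of $G$, and it is precisely the content flagged as the ``well-known fact'' implied by the relation $X_v=w_0X^w$.
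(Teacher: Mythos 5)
Your argument is correct and is precisely the one the paper intends when it says that the relation $X_v=w_0X^w$ with $w=w_0vw_P$ "implies the following well-known fact": you replace $[X_v]$ by $[X^w]$ using connectedness of $G$, then apply \eqref{Schubertdual} and the nonemptiness criterion for $X^w\cap X_u$. The only detail worth keeping explicit is the one you already supply, namely that the fundamental class of a nonempty projective subvariety is nonzero because it has positive degree against an ample class.
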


\subsection*{Quantum cohomology}
Here we provide a   brief description of the small quantum cohomology ring $QH^*(G/P)$  for self-containedness, and refer to \cite{FuPa, FuWo} for more details.

For any   variety $Y$, we denote by  $[Y]_h$ the homology class of $Y$.   Let
$\overline{\mathcal{M}}_{0,3}(G/P, \mathbf{d})$ denote the Kontsevich moduli space of all $3$-pointed
stable maps $(f : C \to G/P, p_1, p_2, p_3)$ of arithmetic genus zero and degree
$f_*([C]_h) = \mathbf{d}\in H_2(G/P, \mathbb{Z})$, which is a projective variety of dimension
$$\dim\overline{\mathcal{M}}_{0,3}(G/P, \mathbf{d})=\dim G/P+\langle c_1(G/P), \mathbf{d}\rangle.$$
 This space is equipped with evaluation maps
$\mbox{ev}_i : \overline{\mathcal{M}}_{0,3}(G/P, \mathbf{d}) \to G/P$ that send a stable map to the image $f(p_i)$ of the $i$-th marked point.
 For $\Delta\setminus\Delta_P=\{\alpha_{j_1}, \cdots, \alpha_{j_r}\}$, we simply denote $q_i=q_{\alpha_{j_i}}$. The quantum product in the quantum cohomology ring $QH^*(G/P)=(H^*(G/P)\otimes \mathbb{Z}[q_1, \cdots, q_r], \star)$ is defined by
   $$ [X_u]\star [X_v] =\sum_{\mathbf{d}\in H_2(G/P, \mathbb{Z})}\sum_{w\in W^P} N_{u, v}^{w, \mathbf{d}} [X_w]\mathbf{q}^{\mathbf{d}}$$
  where $\mathbf{q}^{\mathbf{d}}=\prod_{i=1}^rq_i^{d_i}$ for $\mathbf{d}=\sum_{i=1}^r d_i[X^{s_{\alpha_{j_i}}}]_h\in H_2(G/P, \mathbb{Z})$, and
   $$N_{u, v}^{w, \mathbf{d}}=\int_{\overline{\mathcal{M}}_{0,3}(X, \mathbf{d})}\mbox{ev}_1^*[X_u]\cup \mbox{ev}_2^*[X_v]\cup \mbox{ev}_3^*[X^w]$$
    is a genus zero, 3-pointed Gromov-Witten invariant, counting the number of holomorphic maps   in
    $\{\phi:\mathbb{P}^1\to G/P\mid  \phi_*([\mathbb{P}^1]_h)=\mathbf{d}; \phi(0)\in X_u, \phi(1)\in g_1 X_v, \phi(\infty)\in g_2 X^w\}$
    for generic $g_1, g_2\in G$.
In particular,  $N_{u, v}^{w, \mathbf{d}}=0$ unless $\mathbf{d}$ is effective, namely $d_i\geq 0$ for all $i$. If $\mathbf{d}=\mathbf{0}$, then $\phi$ is a constant map, implying
      \begin{equation}
         [X_u]\star [X_v]|_{\mathbf{q}=\mathbf{0}}=\sum_{w\in W^P}N_{u, v}^{w, \mathbf{0}}[X_w]=[X_u]\cup [X_v].
      \end{equation}
Moreover,  the above sum   is indeed finite, due to the positivity of $c_1(G/P)$.

Let   $\langle\cdot, \cdot\rangle$ denote the natural pairing between cohomology classes and homology classes. The following proposition will be sufficient for our later use.
 \begin{prop}\label{quantumnonvanishi}
    The quantum cohomology   $QH^*(G/P)$ is a $\mathbb{Z}$-graded algebra with respect to the gradings $\deg[X_u]=\ell(u)$ and {\upshape $\deg q_{\alpha}=\langle c_1(G/P), [X^{s_{\alpha}}]_h\rangle$}, where $u\in W^P$ and   $\alpha\in \Delta\setminus \Delta_P$.
    Furthermore for any $v\in W^P$, $[X_u]\star [X_v]\neq 0$.
 \end{prop}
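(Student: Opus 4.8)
The plan is to prove the two assertions of Proposition~\ref{quantumnonvanishi} separately, treating the grading statement as essentially formal and reserving the real work for the nonvanishing of $[X_u]\star[X_v]$. For the grading, I would verify that the structure constants $N_{u,v}^{w,\mathbf{d}}$ respect the claimed $\mathbb{Z}$-grading by a dimension count on the moduli space. Concretely, $N_{u,v}^{w,\mathbf{d}}$ is a Gromov--Witten invariant obtained by integrating $\mathrm{ev}_1^*[X_u]\cup\mathrm{ev}_2^*[X_v]\cup\mathrm{ev}_3^*[X^w]$ over $\overline{\mathcal{M}}_{0,3}(G/P,\mathbf{d})$, so this integral is nonzero only when the total degree of the integrand matches the dimension $\dim G/P+\langle c_1(G/P),\mathbf{d}\rangle$ of the moduli space. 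Writing this codimension equation out gives $\ell(u)+\ell(v)+(\dim G/P-\ell(w))=\dim G/P+\langle c_1(G/P),\mathbf{d}\rangle$, i.e.\ $\ell(w)+\langle c_1(G/P),\mathbf{d}\rangle=\ell(u)+\ell(v)$. Reading $\deg q_{\alpha}=\langle c_1(G/P),[X^{s_\alpha}]_h\rangle$ and $\deg\mathbf{q}^{\mathbf{d}}=\langle c_1(G/P),\mathbf{d}\rangle$, this says precisely that the term $N_{u,v}^{w,\mathbf{d}}[X_w]\mathbf{q}^{\mathbf{d}}$ has degree $\ell(u)+\ell(v)=\deg[X_u]+\deg[X_v]$ whenever it is nonzero, which is exactly the assertion that $QH^*(G/P)$ is graded.

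For the nonvanishing of $[X_u]\star[X_v]$, the cleanest route is to exhibit a single term with a manifestly nonzero, nonnegative coefficient. The natural choice is to pair with the class $[X^{w_0'}]$ where $w_0'$ is the longest element of $W^P$, i.e.\ the class of a point, and to look for the lowest-degree term in $\mathbf{q}$ that survives. I would argue that there exists an effective degree $\mathbf{d}$ and a Weyl group element $w$ such that the Gromov--Witten invariant $N_{u,v}^{w,\mathbf{d}}$ is strictly positive. The key geometric input is that $G/P$ is a rational homogeneous space, hence convex and covered by rational curves, so for a sufficiently positive (large) effective $\mathbf{d}$ one can always connect a general point of $X_u$ to a general point of a translate $g_1 X_v$ by a rational curve of class $\mathbf{d}$ passing through a third prescribed general point; homogeneity and Kleiman transversality then guarantee that the relevant intersection is nonempty and of the expected dimension, forcing a positive count.

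The cleanest formulation of this argument is the one indicated in the excerpt itself: the result is attributed to Fulton--Woodward \cite{FuWo}, who proved that the quantum product of any two Schubert classes in $QH^*(G/P)$ is never zero. I would therefore structure the proof to invoke \cite{FuWo} directly for the nonvanishing, after independently establishing the grading by the dimension count above. The main conceptual obstacle is the nonvanishing half: unlike the classical cup product, where $[X_u]\cup[X_v]$ can certainly vanish, the quantum nonvanishing relies essentially on curve-counting geometry and is not a formal consequence of the ring structure. Making the positivity self-contained would require either the moving-lemma/Kleiman-transversality argument sketched above together with an explicit construction of curves of large degree, or a combinatorial identification of a nonzero structure constant via a quantum Pieri/Chevalley-type rule; both are substantially harder than the grading statement, which is why I would lean on \cite{FuWo} as a black box here while presenting the grading verification in full.
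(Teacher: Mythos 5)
Your proposal matches the paper's treatment: the grading is verified by the standard dimension count on $\overline{\mathcal{M}}_{0,3}(G/P,\mathbf{d})$ (which the paper dismisses as following ``directly from the definition''), and the nonvanishing of $[X_u]\star[X_v]$ is delegated to Fulton--Woodward \cite[Theorem 9.1]{FuWo}, exactly as the paper does. Your additional heuristic sketch of why the curve count is positive is fine as motivation but is not needed, and you correctly recognize that the citation is doing the real work there.
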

\noindent  The nonvanishing of the quantum product of Schubert classes is due to Fulton and Woodward \cite[Theorem 9.1]{FuWo}. The $\mathbb{Z}$-graded algebra structure of $QH^*(G/P)$ follows directly from the definition of quantum cohomology.

\subsection{Proof Theorem \ref{mainthm2}} Let us start with the notion of effective good divisibility introduced in \cite{MOS20}.
 \begin{defn}
{\upshape A complex projective manifold $M$ is said to have  \textit{effective
good divisibility  up to degree $s$}, denoted as  $\mbox{e.d.}(M) = s$,  if $s$ is
the maximum   integer such that $x_i\cup x_j\neq 0$ for any  effective classes $x_i\in H^{2i}(M)\setminus\{0\}$,
$x_j \in H^{2j}(M)\setminus\{0\}$ and any $i,j$ satisfying $i+j\leq s$.
  }
\end{defn}
Recall $\Delta\setminus\Delta_P=\{\alpha_{j_1}, \cdots, \alpha_{j_r}\}$. Denote by $\pi_i: G/P\to G/P_{j_i}$ the natural projection. For any $u\in W^P$, we denote by $u_i$
the minimal length representative  of the coset  $uW_{P_{j_i}}$ in $W^P/W_{P_{j_i}}$. Denote by $Z^{u_i}$ (resp. $Z_{u_i}$) the (opposite) Schubert varieties in $G/P_{j_i}$ of dimension (resp. codimension) $\ell(u_i)$. Denote by $d_{\alpha_{j_i}}(u, w)$ the smallest degree of a $T$-stable curve in $G/P_{j_i}$ connecting    the Schubert varieties
   $Z_{u_i}$ and $Z^{w_i}$. 
    The next proposition is Theorem 5 of \cite{BCLM}.
\begin{prop}\label{propcurvedeg}
   Let $u, w\in W^P$. There exists a stable curve of degree $\mathbf{d}=\sum_{i=1}^r d_i[X^{s_{j_i}}]_h$ connecting $X_u$ and $X^w$ if and only if $d_i\geq d_{\alpha_i}(u, w)$ for all $1\leq i\leq r$.
\end{prop}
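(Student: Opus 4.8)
The plan is to reformulate the statement in terms of \emph{curve neighborhoods}, dispose of the easy ``only if'' direction by projection, and then reduce the ``if'' direction to the minimal degree and to one maximal parabolic at a time. For an effective class $\mathbf{d}$ and a closed subvariety $\Omega\subseteq G/P$, write $\Gamma_{\mathbf{d}}(\Omega)=\mathrm{ev}_2(\mathrm{ev}_1^{-1}(\Omega))$ for the curve neighborhood, using the evaluation maps on $\overline{\mathcal{M}}_{0,2}(G/P,\mathbf{d})$. By construction, a stable curve of degree $\mathbf{d}$ connecting $X_u$ and $X^w$ exists if and only if $\Gamma_{\mathbf{d}}(X_u)\cap X^w\neq\emptyset$. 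Since $X_u$ is $B^-$-stable, so is the closed set $\Gamma_{\mathbf{d}}(X_u)$; by the theorem of Buch--Mihalcea it is a single opposite Schubert variety $X_{z}$ with $z=z(u,\mathbf{d})\le u$, so by the intersection criterion recalled before \eqref{Schubertdual} (namely $X_z\cap X^w\neq\emptyset$ iff $z\le w$) the condition becomes the Bruhat inequality $z(u,\mathbf{d})\le w$. Thus the proposition amounts to showing that $\Gamma_{\mathbf{d}}(X_u)\cap X^w\neq\emptyset$ holds if and only if $\Gamma_{d_i}(Z_{u_i})\cap Z^{w_i}\neq\emptyset$ in $G/P_{j_i}$ for every $i$.

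\textbf{Necessity.} Given a connecting curve $(f\colon C\to G/P,p_1,p_2)$ of degree $\mathbf{d}$, compose with $\pi_i$. Using $\pi_i(X_u)=Z_{u_i}$ and $\pi_i(X^w)=Z^{w_i}$ (both following from $\pi_i(\overline{B^-xP/P})=\overline{B^-xP_{j_i}/P_{j_i}}$) and $\pi_{i*}[X^{s_{j_k}}]_h=0$ for $k\neq i$, the image $\pi_i\circ f$ is a stable curve of degree $d_i$ connecting $Z_{u_i}$ and $Z^{w_i}$. Equivalently, the $T$-invariant closed locus $\mathrm{ev}_1^{-1}(Z_{u_i})\cap\mathrm{ev}_2^{-1}(Z^{w_i})$ in the moduli space over $G/P_{j_i}$ is nonempty, hence contains a $T$-fixed point, i.e.\ a $T$-stable connecting curve of degree $d_i$. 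By minimality of $d_{\alpha_i}(u,w)$ this gives $d_i\ge d_{\alpha_i}(u,w)$ for all $i$.

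\textbf{Sufficiency.} First, monotonicity: if a connecting curve of degree $\mathbf{d}$ exists, then gluing to it at a general unmarked point a line of class $[X^{s_{j_i}}]_h$ through the image of that point (such a line exists through every point by homogeneity of $G/P$) produces a stable curve of degree $\mathbf{d}+[X^{s_{j_i}}]_h$ still connecting $X_u$ and $X^w$. Iterating, it suffices to realize the minimal degree $\mathbf{d}^{\circ}=\sum_i d_{\alpha_i}(u,w)[X^{s_{j_i}}]_h$. The crux is the \emph{projection compatibility} $\pi_i(\Gamma_{\mathbf{d}}(X_u))=\Gamma_{d_i}(Z_{u_i})$, i.e.\ that $z(u,\mathbf{d})$ projects to the Grassmannian curve-neighborhood element $z(u_i,d_i)$. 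Granting this, a $T$-stable connecting curve of degree $\mathbf{d}^{\circ}$ is built by induction along the tower of parabolics obtained by restoring the nodes $\alpha_{j_r},\dots,\alpha_{j_1}$ one at a time, each step being a fibration with Grassmannian fibers: one concatenates, over a common $T$-fixed point, a curve handling $\{j_1,\dots,j_{r-1}\}$ downstairs with a fiberwise minimal curve handling $j_r$.

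I would establish the projection compatibility from the recursion $\Gamma_{\mathbf{d}+[X^{s_{j_i}}]_h}(\Omega)=\Gamma_{[X^{s_{j_i}}]_h}(\Gamma_{\mathbf{d}}(\Omega))$ together with the description of $\Gamma_{[X^{s_{j_i}}]_h}$ on a $B^-$-stable set as a single Chevalley-type move in the node $\alpha_{j_i}$, by induction on $\sum_i d_i$. The main obstacle is precisely this step: a curve realizing the minimal degree for $\alpha_{j_i}$ need not be compatible with one realizing the minimal degree for another node, and one must show that all coordinatewise minima can be attained \emph{simultaneously} on one connected curve. Concretely, this means proving that the single element $z(u,\mathbf{d})\in W^P$ records, through its projections, exactly the Grassmannian distances $d_{\alpha_i}(u,w)$ --- equivalently, that the reflections computing the curve neighborhood in $G/P$ can be organized so that their images compute the curve neighborhoods in all the $G/P_{j_i}$ at once. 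Everything else, namely the two easy inclusions and the monotonicity, is formal.
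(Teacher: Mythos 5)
The paper does not actually prove this proposition: it is quoted verbatim as Theorem~5 of \cite{BCLM}, and the authors explicitly say they include it only because they are not aware of another precise reference. So the relevant comparison is whether your outline would stand on its own as a proof. Your framework (curve neighborhoods in the sense of Buch--Mihalcea) is the right one and is essentially the machinery behind \cite{BCLM}. The necessity direction is sound: pushing a connecting stable map forward along $\pi_i$ gives a connected curve of degree exactly $d_i$ (the classes $[X^{s_{j_k}}]_h$, $k\neq i$, are contracted), and the Borel fixed point theorem applied to the closed $T$-invariant locus $\mathrm{ev}_1^{-1}(Z_{u_i})\cap\mathrm{ev}_2^{-1}(Z^{w_i})$ produces a $T$-stable connecting curve of degree $d_i$, whence $d_i\geq d_{\alpha_i}(u,w)$. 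The monotonicity step (attaching lines to raise the degree) is also fine.

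The gap is that the sufficiency direction, which is the entire nontrivial content of the statement, is only announced. You yourself flag the crux --- that the coordinatewise minima $d_{\alpha_i}(u,w)$ can be attained \emph{simultaneously} by a single connected curve, equivalently the projection formula $\pi_i(\Gamma_{\mathbf{d}}(X_u))=\Gamma_{d_i}(Z_{u_i})$ --- but you do not prove it, and the proposed induction ``along a tower of parabolics'' is not specified enough to check (the intermediate fibers are flag varieties of Levi subgroups, not single Grassmannians, so the concatenation step is not a direct application of the $r=1$ case). There is also a hidden circularity to watch: even granting the projection compatibility, to pass from ``$\Gamma_{d_i}(Z_{u_i})\cap Z^{w_i}\neq\emptyset$ for all $i$'' to ``$\Gamma_{\mathbf{d}}(X_u)\cap X^w\neq\emptyset$'' you need that $z\leq w$ in $W^P$ is detected by all the projections $z_i\leq w_i$ --- but that is exactly the $\mathbf{d}=\mathbf{0}$ case of the proposition (Proposition~\ref{bruhatGP}), which the paper \emph{deduces from} this result. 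You would have to establish that Bruhat-order statement independently (it is true, but it is a genuine theorem about parabolic quotients, not a formality). As written, the proposal is an accurate road map to the proof in \cite{BCLM}, not a proof.
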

 \noindent A $T$-stable curves of degree $\mathbf{0}$   is a $T$-fixed point of the natural action of $T$. There exists a stable curve of degree $\mathbf{0}$ connecting $X_u$ and $X^w$ if and only if   $X_u\cap X^w\neq \emptyset$, and hence it is equivalent to $u\leq w$.
 Similarly, $$0\geq d_{\alpha_{j_i}}(u, v)\geq 0 \Longleftrightarrow d_{\alpha_{j_i}}(u, v)=0 \Longleftrightarrow  Z_{u_i}\cap Z^{w_i}\neq \emptyset \Longleftrightarrow u_i\leq w_i.$$
 In other words, applying Proposition \ref{propcurvedeg} to the special case $\mathbf{d}=\mathbf{0}$, we obtain
  \begin{prop}\label{bruhatGP}
   For any $u, w\in W^P$, $u\leq w$ if and only if $u_i\leq w_i$ for all $i$.
 \end{prop}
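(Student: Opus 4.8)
The plan is to obtain the statement as the degree-$\mathbf{0}$ specialization of Proposition \ref{propcurvedeg}, by translating each side of that biconditional into a Bruhat-order condition. First I would recall that a $T$-stable curve of degree $\mathbf{d}=\mathbf{0}$ is simply a $T$-fixed point, so the existence of such a curve connecting $X_u$ and $X^w$ is nothing other than the nonemptiness $X_u\cap X^w\neq\emptyset$. By the transversal intersection property recorded just after \eqref{Schubertdual}, this nonemptiness holds exactly when $u\leq w$; this handles the left-hand side of the desired equivalence.

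Next I would unwind the right-hand condition of Proposition \ref{propcurvedeg} in the case $\mathbf{d}=\mathbf{0}$. Each $d_{\alpha_{j_i}}(u,w)$ is nonnegative by its very definition as a curve degree, so the inequalities $0\geq d_{\alpha_{j_i}}(u,w)$ are equivalent to the vanishing $d_{\alpha_{j_i}}(u,w)=0$ for every $i$. Applying the same degree-$\mathbf{0}$ reasoning, now inside the Grassmannian $G/P_{j_i}$, the vanishing $d_{\alpha_{j_i}}(u,w)=0$ is equivalent to $Z_{u_i}\cap Z^{w_i}\neq\emptyset$, and hence to $u_i\leq w_i$. Here I would invoke the standard fact that the projection $\pi_i$ carries $X_u$ onto the Schubert variety $Z_{u_i}$, where $u_i$ is precisely the minimal length representative of $uW_{P_{j_i}}$, and likewise $\pi_i(X^w)=Z^{w_i}$.

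Chaining these equivalences through Proposition \ref{propcurvedeg} then gives
\begin{equation*}
 u\leq w \;\Longleftrightarrow\; d_{\alpha_{j_i}}(u,w)=0 \ \text{ for all } i \;\Longleftrightarrow\; u_i\leq w_i \ \text{ for all } i,
\end{equation*}
which is exactly the assertion. The only real content beyond citing Proposition \ref{propcurvedeg} lies in this dictionary between the two geometric conditions and the Bruhat order, and the step I expect to require the most care is the identification of $u_i$ and $w_i$ as the correct minimal-length coset representatives indexing the images $Z_{u_i}$ and $Z^{w_i}$ of $X_u$ and $X^w$ under $\pi_i$; once this bookkeeping is settled, the whole argument is purely formal and carries no further geometric input.
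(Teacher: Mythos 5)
Your argument is correct and is essentially identical to the paper's: both specialize Proposition \ref{propcurvedeg} to $\mathbf{d}=\mathbf{0}$, identify degree-zero stable curves with $T$-fixed points so that existence of such a curve means $X_u\cap X^w\neq\emptyset$, i.e.\ $u\leq w$, and translate $d_{\alpha_{j_i}}(u,w)=0$ into $Z_{u_i}\cap Z^{w_i}\neq\emptyset$, i.e.\ $u_i\leq w_i$. The only cosmetic difference is that you route the last step through the images $\pi_i(X_u)$, whereas the paper uses directly that $d_{\alpha_{j_i}}(u,w)$ is defined via the Schubert varieties $Z_{u_i}$ and $Z^{w_i}$.
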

 \noindent The above proposition is the key ingredient in our proof of Theorem \ref{mainthm2}, which reduces the Bruhat order on $W^P$ to that on   $W^{P_{j_i}}$.
This property has been well known for a long time for $G=SL(n+1, \mathbb{C})$ (see for instance exercise 8 on page 175 of \cite{Fult}).
We explain it for general $G$ from \cite[Theorem 5]{BCLM}, for not being aware of a precise reference elsewhere.

\bigskip

\begin{proof}[Proof of Theorem \ref{mainthm2}]
  The natural projection $\pi_i$ induces an injective homomorphism $\pi_i^*: H^*(G/P_{j_i})\to H^*(G/P)$, which sends a Schubert class $[Z_w]$ to the Schubert class $[X_w]$ of the same subscript. It follows that $\mbox{e.d.}(G/P)\leq \mbox{e.d.}(G/P_{j_i})$ for all $i$. Hence,
   $\mbox{e.d.}(G/P)\leq \min\{\mbox{e.d.}(G/P_{j_i})\mid 1\leq i\leq r\}$.

   Say $\mbox{e.d.}(G/P_{j_m})$ is the minimum among $\mbox{e.d.}(G/P_{j_i})$'s, $1\leq i\leq r$.
 Take any $u, v\in W^P$ with $\ell(u)+\ell(v)\leq \mbox{e.d.}(G/P_{j_m})$.
 Then $w:=w_0vw_P\in W^P$ and   we have the factorizations $u=u_i\bar u_i$, $w=w_i\bar w_i$, where $u_i, w_i\in W^{P_{j_i}}$ and $\bar u_i, \bar w_i\in W_{P_{j_i}}$.
 For any $\alpha\in \Delta_P$, we have $u(\alpha)\in R^+$ due to  $u\in W^P$. Since $\Delta_P\subseteq \Delta_{P_{j_m}}$,   $\bar u_i(\alpha)$ belongs to the root subsystem $R_{P_{j_m}}$. Then it follows from   $u_i\in W^{P_{j_m}}$ and $u_i(\bar u_i(\alpha))=u(\alpha)\in R^+$  that $\bar u_i(\alpha)$ is again a positive root. Therefore
 $\bar u_i\in W_{P_{j_i}}\cap W^P$. Similarly, we have  $\bar w_i\in W_{P_{j_i}}\cap W^P$.
Moreover, $\ell(u_i)\leq \ell(u)$, $[X_v]=[X^{w}]\in H^*(G/P)$, $[Z^{w_i}]=[Z_{w_0w_iw_{P_{j_i}}}]\in H^*(G/P_{j_i}) \mbox{ and}$
 \begin{align*}
    \ell(w_0w_iw_{P_{j_i}})&=\ell(w_0)-\ell(w_i)-\ell(w_{P_{j_i}})\\
    &=\ell(w_0)-\ell(w)+\ell(\bar w_i)- \ell(w_{P_{j_i}})\\
   &=\ell(w_0)-(\ell(w_0)-\ell(v)-\ell(w_{P}))+\ell(\bar w_i)- \ell(w_{P_{j_i}})\\
   &=\ell(v)+\ell(\bar w_iw_P)-\ell(w_{P_{j_i}})\\
   &\leq \ell(v).
 \end{align*}
 Therefore $\ell(u_i)+\ell(w_0w_iw_{P_{j_i}})\leq \ell(u)+\ell(v)\leq \mbox{e.d.}(G/P_{j_m})\leq \mbox{e.d.}(G/P_{j_i})$,
  implying $[Z_{u_i}]\cup [Z^{w_i}]=[Z_{u_i}]\cup [Z_{w_0w_iw_{P_{j_i}}}]\neq 0$. Thus $u_i\leq w_i$ for all $i$. By Proposition \ref{bruhatGP}, we have $u\leq w$.
  Hence, $[X_u]\cup [X_v]=[X_u]\cup [X^w]\neq 0$. It follows that $\mbox{e.d.}(G/P)\geq  \mbox{e.d.}(G/P_{j_m})=  \min\{\mbox{e.d.}(G/P_{j_i})\mid 1\leq i\leq r\}$.

  Hence, $\mbox{e.d.}(G/P)= \min\{\mbox{e.d.}(G/P_{j_i})\mid 1\leq i\leq r\}$.
\end{proof}

\section{Effective good divisibility of Grassmannians of classical type}\label{S:grassmannians}
In this section, we study $\mbox{e.d.}(G/P_m)$ for $G$ of classical type. The key technical propositions are Proposition \ref{thmtypeB} and Proposition \ref{thmtypeD}, dealing with the case of Lie types $\mbox{B}_n$ and $\mbox{D}_{n+1}$ respectively.
  .
\subsection{Isotropic Grassmannians} We refer to   \cite{BKT, Ravi} for more details on the facts reviewed here.
 A Grassmannian $G/P_m$ of type  $\mbox{A}_n$, i.e. when  $G=SL(n+1, \mathbb{C})$, is known as a complex Grassmannian $Gr(m, n+1)=\{\Sigma\leqslant \mathbb{C}^{n+1}\mid \dim \Sigma= m\}$. This interpretation can be generalized to other classical types.

Let $V$ be an $N$-dimensional  complex vector space  equipped with a non-degenerate skew-symmetric or symmetric  bilinear form $\omega(\cdot, \cdot)$, and denote
\[
X=IG_{\omega}(m, N) := \{\Sigma \leqslant V: \dim_\C \Sigma=m,\,\, \omega(\+v,\+w) = 0 \hphantom{.} \forall \+v,\+w \in \Sigma\}.
\]
$X$ is  smooth projective variety if $m\leq {N\over 2}$,  or an empty set otherwise. We usually use a different notation of $X$ in each specified type.
  \begin{enumerate}
    \item[b)] $\omega$ is symmetric and $N=2n+1$. Then $X=OG(m, 2n+1)$ is called an odd orthogonal Grassmannian. Set $k:=n-m$.
      \item[c)] $\omega$ is skew-symmetric, implying $N=2n$. Then $X=SG(m, 2n)$ is called a symplectic Grassmannian. Set $k:=n-m$.
     \item[d)] $\omega$ is symmetric and $N=2n+2$. Then $X=OG(m, 2n+2)$ is called an even orthogonal Grassmannian. Set $k:=n+1-m$.

   \end{enumerate}
 When $G$ is of types $\mbox{B}_n$, $\mbox{C}_n$  or $\mbox{D}_{n+1}$, $G/P_{m}$ can be realized as an isotropic Grassmannian $IG_\omega(m, N)$ as above,  with an additional requirement $m<n$ when   $G$ is of type $\mbox{D}_{n+1}$. The isotropic Grassmannian  $OG(n+1, 2n+2)$  consists of two connected components, which are isomorphic to  $G/P_{n}$ and $G/P_{n+1}$ of type  $\mbox{D}_{n+1}$  respectively. Moreover,   they are all isomorphic   to $OG(n, 2n+1)$.
 Therefore we will further assume $m<n$ in case d) above.

A partition is a weakly decreasing sequence of non-negative integers $\lambda=(\lambda_1\geq \cdots\geq \lambda_m\geq 0)$. The weight of $\lambda$ is the sum $|\lambda|=\sum_i \lambda_i$.
For  $G=SL(n+1, \mathbb{C})$, the Weyl group $W$ is the group $S_{n+1}$ of permutations of $(n+1)$ objects. The subset $W^{P_m}$ is bijective to the set $\mathcal{P}_{m, n+1}$  of partitions inside an $m\times (n+1-m)$ rectangle, by $w\mapsto (w(m)-m, \cdots, w(1)-1)$.  The length $\ell(u)$ equals  the weight of the corresponding partition, and the Bruhat order on $W^{P_m}$ coincides with the standard partial order on $\mathcal{P}_{m, n+1}$  as real vectors.
In  \cite{BKT},  Buch, Kresch and Tamvakis introduced the following  notion of $k$-strict partitions for $G/P_m$ of other classical types,
  making the codimension of Schubert varieties apparent.

\begin{defn}
  {\upshape A partition $\lambda$ is called \textit{$k$-strict} if no part greater than $k$ is repeated, namely $\lambda_j>k \Rightarrow \lambda_j>\lambda_{j+1}$. }
\end{defn}
\noindent Notice that the cases b), c) and d) can be distinguished by the triple $(N, n, k)$. There is a bijection $u\mapsto \lambda(u)$ between  $W^{P_m}$ and    $\mathcal{P}_N(k, n)$, the set of $k$-strict partitions contained in an $m\times (n+k)$ rectangle in cases $\mbox{B}_n, \mbox{C}_n$ (for $\mbox{D}_n$ one should use a slight modification
 of $k$-strict partitions in order to describe Schubert varieties; we will recall this modification in section \ref{secevenorth}).
Schubert varieties $X_u=X_{\lambda(u)}$ have apparent codimension $|\lambda(u)|$, yet the Bruhat order $u\leq w$ becomes complicated in terms of $k$-strict partitions in general.

  It is a central problem to find a manifestly positive formula of all Schubert structure constants $N_{u, v}^{w, 0}$ for $H^*(G/P)$.
   This remains unknown even for $G/P_m$ in cases b), c), d) with $2\leq m<n$.
  The case $m=n$ has been much better understood (we refer to \cite{ThYo} and references therein).
There is an exact sequence of vector bundles over $X$,
  \begin{equation}\label{bundleseq}
     0\longrightarrow \mathcal{S}\longrightarrow \mathcal{V}_X\longrightarrow \mathcal{Q}\longrightarrow 0
  \end{equation}
   with $\mathcal{S}$ the tautological subbundle of the trivial bundle $\mathcal{V}_X=X\times V$, whose fiber at a point $\Sigma\in X$ is just the vector space $\Sigma$.
   The multiplication of $c_i(\mathcal{S}^\vee)$ (resp. $c_j(\mathcal{Q})$) by general Schubert classes are known as Pieri rules and are provided in \cite{PrRa96, PrRa03} (resp. \cite{BKT}).
    The Chern classes of $\mathcal{Q}$ and the dual vector bundle $\mathcal{S}^\vee$ are Schubert classes labeled by special $k$-strict partitions (up to a scalar of $2$).
In particular, $\mathcal{S}^\vee$ is of rank $m$, $\mathcal{Q}$ is of rank $N-m$, and for $X=OG(m, N)$ (with $m<n$) we have
  $$c_m(\mathcal{S}^\vee)=[X_{(1,\cdots, 1)}],\qquad  c_{n+k}(\mathcal{Q})= 2[X_{(n+k, 0, \cdots, 0)}],\qquad c_{n+k+1}(\mathcal{Q})=0,$$
for more detailed explanation of which we refer to \cite{BKT, Tamv}. It follows that
   \begin{equation}\label{Chernprod}
       0=c_{n+k}(\mathcal{Q})\cup c_m(\mathcal{S}^\vee)=2[X_{(n+k, 0, \cdots, 0)}]\cup [X_{(1,\cdots, 1)}] \in H^{2(n+k+m)}(X).
   \end{equation}
We remark that when $X$ is a complex or symplectic Grassmannian, the scalar $2$ does not occur, and we have $c_m(\mathcal{S}^\vee)\cup c_{n+1-m}(\mathcal{Q})=0$ for $Gr(m, n+1)$.

The Borel subgroup $B$ of $G$ stabilize an isotropic complete flag $F_\bullet$, which specifies a  basis $\{\mathbf{e}_1, \cdots, \mathbf{e}_N\}$ of $V$  that satisfies $(\mathbf{e}_i, \mathbf{e}_j)=\delta_{i+j, N+1}$ for all $i\leq j$; moreover,  $F_j = \langle \+e_1,\ldots,\+e_j \rangle$, the span of the first $j$ basis vectors.
    Schubert varieties can also be parameterized by  \textit{index sets} $\mathscr{P}$ (called also Schubert symbol in \cite{Ravi}), which are subsequences $\mathscr{P}=\{p_1<p_2<\cdots<p_m\}$ in $[N]$ that
      satisfy $p_i+p_j\neq N+1$ for all $i\neq j$.  There is a bijection between
    $\mathcal{P}_N(k, n)$ and the set $\mathfrak{S}(X)$ of index sets.
    The set $\mathfrak{S}(X)$ is equipped with a standard partial order, say $\mathscr{P}\leq \hat{\mathscr{P}}$ if and only if $p_i\leq \hat p_i$ for all $1\leq i\leq m$.
    The parameterizations by index sets provide geometric descriptions of Schubert varieties and make the Bruhar order apparent in terms of the standard order $\leq$, while it pays the price at  dimension counting of Schubert varieties.
    In next two subsections, we will restrict to $X=OG(m, N)$, with $1\leq m<n$ in case b) and $2\leq m<n$ in case d), and
    will describe the correspondence $\lambda\mapsto \mathscr{P}(\lambda)$  precisely. We will deal with the remaining cases in section \ref{Pfclassical}.
    We will interchange the parameterizations $u$, $\lambda=\lambda(u)$ and $\mathscr{P}=\mathscr{P}(\lambda)$.
\subsection{Odd orthogonal Grassmannians}
  Recall $k=n-m$ for $X=OG(m, 2n+1)$. The bijection $\Phi$ is given by
  \begin{align*}
     \Phi:& \mathcal{P}_{2n+1}(k, n)\longrightarrow \mathfrak{S}(X)=\{\mathscr{P}\subset [1, 2n+1]\mid p_i+p_j\neq 2n+2,\,\, \forall i\neq j;\,\,\,   n+1\not\in \mathscr{P}\}\\
          &\qquad \lambda\mapsto \mathscr{P}(\lambda)=(p_1(\lambda),\cdots, p_m(\lambda))\qquad\quad\mbox{with}
  \end{align*}
   \begin{equation} \label{pjformula}
      p_j(\lambda)=n+k+1-\lambda_j+ \#\{i<j:\lambda_i+\lambda_j\leq 2k+j-i\}+\begin{cases}
         1,&\mbox{if } \lambda_j\leq k,\\0,&\mbox{if }\lambda_j>k.
      \end{cases}.
   \end{equation}
  The Schubert variety $X_\lambda=X_{\mathscr{P}(\lambda)}$   (relative to the isotropic  flag $F_\bullet$) is given by
         $$X_\lambda=X_\lambda(F_\bullet)=\{\Sigma \in X\mid \dim (\Sigma\cap F_{p_j(\lambda)})\geq j, \,\,\forall 1\leq j\leq m\}=:X_{\mathscr{P}(\lambda)},$$
  where the rank condition   $\dim (\Sigma\cap F_{p_j(\lambda)})\geq j$ becomes trivial whenever $\lambda_j=0$.
 The next property follows immediately from the above description of Schubert varieties.
  \begin{prop} For any $\mathscr{P}, \mathscr{Q}\in \mathfrak{S}(X)$, $X_{\mathscr{P}}\subseteq X_{\mathscr{Q}}$ if and only if $\mathscr{P}\leq \mathscr{Q}$.
 \end{prop}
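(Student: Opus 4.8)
The plan is to prove the two implications separately; throughout write $\mathscr{P}=\{p_1<\cdots<p_m\}$ and $\mathscr{Q}=\{q_1<\cdots<q_m\}$, so that $\mathscr{P}\leq\mathscr{Q}$ means $p_j\leq q_j$ for all $j$.

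For the implication $\mathscr{P}\leq\mathscr{Q}\Rightarrow X_{\mathscr{P}}\subseteq X_{\mathscr{Q}}$, I would argue pointwise from the defining rank conditions. Fix $\Sigma\in X_{\mathscr{P}}$ and $1\leq j\leq m$. Since $p_j\leq q_j$ we have $F_{p_j}\subseteq F_{q_j}$, hence $\Sigma\cap F_{p_j}\subseteq \Sigma\cap F_{q_j}$ and therefore $\dim(\Sigma\cap F_{q_j})\geq\dim(\Sigma\cap F_{p_j})\geq j$. As this holds for every $j$, the point $\Sigma$ lies in $X_{\mathscr{Q}}$, giving the inclusion.

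For the converse, the key step is to test the inclusion on a single carefully chosen point, namely the coordinate subspace $\Sigma_0:=\langle\mathbf{e}_{p_1},\ldots,\mathbf{e}_{p_m}\rangle$ (the $T$-fixed point attached to $\mathscr{P}$). First I would verify $\Sigma_0\in X$: from $\omega(\mathbf{e}_a,\mathbf{e}_b)=\delta_{a+b,\,2n+2}$, isotropy of $\Sigma_0$ reduces to $p_i+p_j\neq 2n+2$ for $i\neq j$ together with $2p_i\neq 2n+2$; the former holds because $\mathscr{P}$ is an index set, and the latter, i.e. $p_i\neq n+1$, because $n+1\notin\mathscr{P}$. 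Next, since $F_t=\langle\mathbf{e}_1,\ldots,\mathbf{e}_t\rangle$ one has $\Sigma_0\cap F_t=\langle\mathbf{e}_{p_i}:p_i\leq t\rangle$, whence $\dim(\Sigma_0\cap F_{p_j})=\#\{i:p_i\leq p_j\}=j$ and so $\Sigma_0\in X_{\mathscr{P}}$.

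The hypothesis $X_{\mathscr{P}}\subseteq X_{\mathscr{Q}}$ then forces $\Sigma_0\in X_{\mathscr{Q}}$, so $\dim(\Sigma_0\cap F_{q_j})\geq j$ for every $j$. Since $\dim(\Sigma_0\cap F_{q_j})=\#\{i:p_i\leq q_j\}$ and $\{i:p_i\leq q_j\}$ is an initial segment of $\{1,\ldots,m\}$, at least $j$ of the increasing values $p_1<\cdots<p_m$ lie below $q_j$, and in particular $p_j\leq q_j$. As $j$ is arbitrary, $\mathscr{P}\leq\mathscr{Q}$. The only point demanding any care is the verification that $\Sigma_0$ genuinely belongs to $X$ — this is precisely where both defining conditions of an index set in $\mathfrak{S}(X)$ get used — after which the whole argument is a transparent dimension count; there is no substantive obstacle.
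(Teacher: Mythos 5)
Your proof is correct and fills in exactly the argument the paper leaves implicit: the paper offers no details, stating only that the proposition ``follows immediately from the above description of Schubert varieties.'' The forward implication via $F_{p_j}\subseteq F_{q_j}$ and the converse via testing on the $T$-fixed point $\langle\mathbf{e}_{p_1},\ldots,\mathbf{e}_{p_m}\rangle$ (whose membership in $X$ uses both defining conditions of an index set, as you note) are precisely the standard way to make that ``immediately'' rigorous.
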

\noindent In particular we define $\mathscr{P}\preceq  \mathscr{Q}$ if and only if $X_{\mathscr{P}}\subseteq X_{\mathscr{Q}}$ as in \cite{Ravi}.
 Then
  \begin{equation}\label{bruhatequivB}
     \mu\preceq \lambda \Longleftrightarrow \mathscr{P}(\lambda)\preceq  \sP(\mu)=:\mathscr{Q} \Longleftrightarrow \sP\leq \sQ.
  \end{equation}
The dual  $\lambda^\vee$ of $\lambda=\lambda(u)$ is the $k$-strict partition in $\mathcal{P}_{2n+1}(k, n)$ that corresponds to $w_0uw_{P_m}$.
By \eqref{Schubertdual} and  $[X^u]=[X_{w_0uw_{P_m}}]$, $\lambda^\vee$ is the unique element  satisfying
    $$\int_X [X_\mu]\cup [X_{\lambda^\vee}]=\delta_{\mu, \lambda}, \forall\,\,  \mu\in \mathcal{P}_{2n+1}(k, n)$$
The dual index set $\mathscr{P}^\vee=\sP(\lambda)^\vee:=\sP(\lambda^\vee)$  has a simple description by
    \begin{equation}\label{pjdualformulaB}
       p_j^\vee=2n+2-p_{m+1-j},\qquad \forall 1\leq j\leq m.
    \end{equation}

We introduce the following definition for both $OG(m, 2n+1)$ and $OG(m, 2n+2)$.

\begin{defn}\label{defnsmall}
  {\upshape  Let $\lambda =(\lambda _1, \lambda _2, \cdots, \lambda _m)$ be a $k$-strict partition, and $1\leq i\textless j\leq m$. We call the subscript pair $(i,j)$   \textit{small} if $\lambda _i+\lambda _j< 2k+1+j-i$, or \textit{big} otherwise. We call the $k$-partition $\lambda$    \textit{small} if all subscript pairs of $\lambda$ are small,  or \textit{big} otherwise. {We always denote $\lambda_0:=n+k+1$ for convention, so that    $(0,j)$ is   treated as a big subscript pair regardless of whether $\lambda$ is small.}
}\end{defn}

\begin{lemma}\label{propbig}
 Let $\lambda =(\lambda _1, \cdots,  \lambda _m)$ be a $k$-strict partition,   and $\mathscr{P}=\mathscr{P}(\lambda)=\{ p_1\textless p_2\textless \cdots \textless p_m\}$.
 Suppose $1\leq i<j\leq m$.

\begin{enumerate}
  \item[i)]  If $(i, j)$ is big, so is   $(i-1,j)$. Moreover, if   $i<j-1$, then $(i, j-1)$ is  big.
   \item[ii)] If $\lambda$ is big, then there exists an integer $1< a\leq m$, such that for any $1\textless i\leq a$,  $(1,i)$ is big, and that for any $a\textless j\leq m$,  $(1,j)$ is not big.

  \item[iii)] For any $1\leq j\leq m$, there exists a unique integer $0\leq a < j$, such that for any $0\leq i\leq a$, $(i,j)$ is big, and that  for any $a+1\leq t\textless j$, $(t,j)$ is not big.
\end{enumerate}
\end{lemma}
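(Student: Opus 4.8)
The plan is to treat part i) as the engine and to deduce ii) and iii) from it purely formally. Part i) records two monotonicity properties of the bigness condition $\lambda_i+\lambda_j\geq 2k+1+j-i$, one for decreasing the first index and one for decreasing the second; once these are established, parts ii) and iii) amount to extracting maximal initial segments from a downward-closed set.

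First I would prove the ``column'' step, that $(i,j)$ big implies $(i-1,j)$ big. Passing from $i$ to $i-1$ raises the threshold from $2k+1+j-i$ to $2k+1+j-(i-1)$, i.e.\ by exactly $1$, so mere monotonicity of the parts $\lambda_{i-1}\geq\lambda_i$ is insufficient; I need the \emph{strict} gain $\lambda_{i-1}\geq\lambda_i+1$. The crucial observation is that bigness of $(i,j)$ already forces $\lambda_i>k$: otherwise, since $j>i$ gives $\lambda_j\leq\lambda_i\leq k$, we would have $\lambda_i+\lambda_j\leq 2k<2k+2\leq 2k+1+j-i$, contradicting bigness. With $\lambda_{i-1}\geq\lambda_i>k$ in hand, $k$-strictness yields $\lambda_{i-1}>\lambda_i$, whence $\lambda_{i-1}+\lambda_j\geq\lambda_i+1+\lambda_j\geq 2k+1+j-(i-1)$; the case $i=1$ is covered by the convention that $(0,j)$ is big. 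The ``row'' step, that $(i,j)$ big implies $(i,j-1)$ big when $i<j-1$, is immediate: lowering the second index lowers the threshold by $1$, while $\lambda_{j-1}\geq\lambda_j$ keeps the left-hand side from decreasing.

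For part ii), I would start from any big pair $(i,j)$ witnessing that $\lambda$ is big, climb to $(1,j)$ by iterating the column step, then descend to $(1,2)$ by iterating the row step, so that $(1,2)$ is big. The contrapositive of the row step shows that $\{\,i:(1,i)\text{ is big}\,\}$ is an initial segment $\{2,\ldots,a\}$ of $\{2,\ldots,m\}$, and since it contains $2$ we get $a\geq 2>1$, as required. For part iii), I would fix $j$ and use only the column step: the set $\{\,i:0\leq i<j,\ (i,j)\text{ is big}\,\}$ is downward closed and contains $0$ by convention, so it equals $\{0,1,\ldots,a\}$ for a unique $a$ with $0\leq a<j$, which is exactly the assertion.

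The only genuine obstacle is the strict inequality $\lambda_{i-1}>\lambda_i$ in the column step. This is the single place where $k$-strictness is essential and where one must rule out a repeated part, and it rests on the elementary but decisive fact that a big pair forces its entries to exceed $k$. Everything else is bookkeeping with initial segments.
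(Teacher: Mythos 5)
Your proof is correct and follows essentially the same route as the paper: part i) serves as the engine, and parts ii) and iii) are obtained by extracting maximal initial segments of the downward-closed sets of big pairs. The only difference is that the paper dismisses part i) as immediate from the definition, whereas you correctly identify and supply the one non-trivial ingredient of the column step --- that bigness of $(i,j)$ forces $\lambda_i>k$, so $k$-strictness gives the strict gain $\lambda_{i-1}\geq\lambda_i+1$ needed to absorb the threshold increasing by $1$.
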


\begin{proof} Statement i)  follows immediately from  Definition \ref{defnsmall}.

If $\lambda$ is big, then there exists a big pair $(p,q)$ where $p<q$. By using the first half of i) repeatedly,
  we conclude that  $(1,q)$ is big. Set $a=max\{b|(1,b)\text{ is big}\}$. Then $(1,j)$ is small for any $a\textless j\leq m$.
By the second half of i),
   $(1,i)$ is big for all $1<i\leq a$.       Hence, statement ii) holds.

   The argument  for statement iii) is similar to that for statement ii).
\end{proof}

Due to  formula \eqref{pjformula}, we define a map $f: \mathcal{P}_{N}(k)\times [m] \to [0, m-1]$ by letting $f(\lambda, j)$ count the number of big subscript pairs of the form $(i, j)$. Namely,
\begin{equation} \label{defflambda}
     f(\lambda ,j):=j-1- \#\{i<j:\lambda_i+\lambda_j\leq N-2m-1+j-i\}.
\end{equation}
 Notice $N-2m-1=2k$ for $N=2n+1$.
In particular, we have $f(\lambda ,j)\leq j-1$ for any $j$, and $f(\lambda ,j)=0$ if $m=1$. Moreover, the value $f(\lambda ,j)$ coincides with the unique number $a$ in  Lemma \ref{propbig} iii).
We will prove a result similar to Proposition \ref{thmtypeB} also for $OG(m,2n+2)$.

\begin{prop}\label{thmtypeB}
Let $\lambda, \mu$ be $k$-strict partitions. If $\left| \lambda \right| +\left| \mu \right|< 2n, $ then for all $j$, the inequality
$f(\lambda ,m+1-j)+f(\mu ,j)\leq n+k-\lambda _{m+1-j}-\mu _j$ holds.
\end{prop}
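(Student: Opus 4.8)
The plan is to prove the inequality by contradiction, converting it into a lower bound on the total weight $|\lambda|+|\mu|$. Setting $j'=m+1-j$ and recalling $n=k+m$ (so $2n=2k+2m$ and $n+k=2k+m$), the assertion is exactly
\[
\bigl(f(\lambda,j')+\lambda_{j'}\bigr)+\bigl(f(\mu,j)+\mu_j\bigr)\le n+k .
\]
First I would assume it fails, i.e. $f(\lambda,j')+\lambda_{j'}+f(\mu,j)+\mu_j\ge n+k+1$, and aim to deduce $|\lambda|+|\mu|\ge 2n$, contradicting the hypothesis $|\lambda|+|\mu|<2n$.

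The core of the argument is a lower bound for $|\lambda|$ in terms of the single quantity $f(\lambda,j')+\lambda_{j'}$. Here Lemma \ref{propbig}(iii) is the main tool: it identifies $a:=f(\lambda,j')$ with the largest index for which $(a,j')$ is big, so that $(i,j')$ is big for every $1\le i\le a$. By Definition \ref{defnsmall}, bigness of $(i,j')$ means $\lambda_i\ge 2k+1+j'-i-\lambda_{j'}$; combined with $\lambda_i\ge\lambda_{j'}$ and $j'-i\ge 1$ this gives $\lambda_i\ge\max\{\lambda_{j'},\,2k+2-\lambda_{j'}\}\ge k+1$ for each $i\le a$. Hence $|\lambda|\ge\sum_{i=1}^{j'}\lambda_i\ge (k+1)f(\lambda,j')+\bigl(j'-f(\lambda,j')\bigr)\lambda_{j'}$, and when $\lambda_{j'}>k$ the $k$-strict staircase (strictly decreasing parts above $k$) sharpens this further. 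I would record the symmetric estimate for $|\mu|$.

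With both estimates in hand, I would add them and feed in the assumed inequality together with $j+j'=m+1$ and the box constraints $\lambda_{j'},\mu_j\le n+k$. The resulting inequality is a small optimization: all coefficients $(k+1)$, $j'-f(\lambda,j')$ and $j-f(\mu,j)$ are at least $1$, and the surplus of $m-1$ needed to pass from $n+k+1$ to $2n$ is supplied by the genuinely large coefficients occurring at the extremes (e.g. the factor $j$ multiplying $\mu_j$ when $f(\mu,j)$ is small). A short case analysis according to whether $\lambda_{j'}\le k$ and whether $\mu_j\le k$ then yields $|\lambda|+|\mu|\ge 2n$. One checks that equality is attained precisely at the pair $\lambda=(n+k,0,\dots,0)$, $\mu=(1,\dots,1)$ realizing the Chern-class relation \eqref{Chernprod}, which is reassuring since that pair has $|\lambda|+|\mu|=2n$ exactly.

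The step I expect to be the main obstacle is making the weight estimate sharp enough. For fixed $f(\lambda,j')+\lambda_{j'}$ the minimizing partition trades the number $a$ of big parts against the size $\lambda_{j'}$ of the last part, and the crude bound $\lambda_i\ge k+1$ can lose too much when $\lambda_{j'}>k$; there one must instead invoke $k$-strictness to recover the staircase contribution. Balancing these two regimes so that the two one-sided bounds combine to reach exactly $2n$—and verifying that the optimization closes in every case, using the part-size bound $\le n+k$—is the delicate point. The same scheme should then carry over, with the appropriate value of $k$, to the even orthogonal case $OG(m,2n+2)$.
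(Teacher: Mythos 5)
Your overall strategy (contradiction plus a lower bound on $|\lambda|+|\mu|$) is the same as the paper's, but the key estimate you propose is too weak to close the argument. The loss occurs exactly where you flatten the big-pair inequality $\lambda_i\ge 2k+1+j'-i-\lambda_{j'}$ down to $\lambda_i\ge k+1$: when $\lambda_{j'}$ (or $\mu_j$) is far below $k+1$, the true bound is much larger than $k+1$, and discarding it --- together with fully decoupling the estimates for $|\lambda|$ and $|\mu|$ --- makes the final optimization fail. Concretely, take $k=2$, $m=5$ (so $n=7$, $2n=14$), $j=5$, $j'=1$, and the parameter values $a=f(\lambda,1)=0$, $\lambda_1=9=n+k$, $b=f(\mu,5)=1$, $\mu_5=0$ (realized, e.g., by $\lambda=\mu=(9,0,0,0,0)$). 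Your failure assumption $f(\lambda,j')+\lambda_{j'}+f(\mu,j)+\mu_j\ge n+k+1=10$ holds with equality, but your bounds give only $|\lambda|\ge(j'-a)\lambda_{j'}=9$ and $|\mu|\ge(k+1)b+(j-b)\mu_j=3$, summing to $12<14=2n$, so no contradiction is reached. (The actual partitions satisfy $|\mu|\ge 9$, because bigness of the pair $(1,5)$ with $\mu_5=0$ forces $\mu_1\ge 9$ --- precisely the information the flattened bound throws away.) Hence no case analysis can complete the proof from the estimates as you state them.

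The paper's proof keeps exactly the ingredients you drop. Under the failure assumption it first shows that one of $\lambda_{m+1-j},\mu_j$ is $\le k$ and the other is $\ge k+2$, which forces $f(\mu,j)=j-1$ and the staircase $\mu_i\ge\mu_j+(j-i)$; it then normalizes to the equality case $\lambda_{m+1-j}+\mu_j=n+k-a-j+2$ (by subtracting $1$ from $\mu_1,\dots,\mu_j$), and bounds $\sum_{i\le a}\lambda_i$ by the full staircase $\sum_{i=1}^{a}\bigl(2k+2+m-j-i-\lambda_{m+1-j}\bigr)$, thereby retaining the coupling between $\lambda_{m+1-j}$ and $\mu_j$. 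The resulting joint lower bound $I(j,a,\mu_j)$ is a genuine three-variable quadratic whose minimization over integer points (split according to the sign of $2a+2j-m-1$, and using $n>m\ge 2$) constitutes the bulk of the proof --- considerably more than the ``short case analysis'' you anticipate. If you restore the $i$-dependent big-pair bound and the coupling between the two partitions, your outline becomes essentially the paper's argument.
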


\begin{proof}
If $m=1$, then $f$ is a constant map with image $0$, and hence the expected inequality becomes trivial by noting $k=n-1$. Now we consider  $m\geq 2$, and give the proof by contradiction.

Assume $f(\lambda ,m+1-j)+f(\mu ,j)> n+k-\lambda _{m+1-j}-\mu _j$ for some $j$. Then it directly follows from the definition of the map $f$ that
$$\lambda _{m+1-j}+\mu _j> n+k-f(\lambda ,m+1-j)-f(\mu ,j)\geq n+k-(m-j)-(j-1)=2k+1.$$
Moreover, either $\lambda_{m+1-j}$ or $\mu_j$ must be less than $k+1$ (otherwise, we would deduce a contradiction from the inequalities
  $\left| \lambda \right| +\left| \mu \right|\geq \sum^{m+1-j}_{i=1}\lambda _i+\sum^{j}_{i=1}\mu _i\geq 2k+2+\sum^{m-j}_{i=1}(k+2)+\sum^{j-1}_{i=1}(k+2) =2n+(m-1)k\geq 2n$).
Without loss of generality, we assume $\lambda _{m+1-j}\leq k$, which implies $\mu _j\geq k+2$ and hence $\mu _{j-1}\textgreater \mu _j$ by the definition of $k$-strict partitions.
Here we  use the above convention $\mu_0=n+k+1$ when $j=1$.
Thus $\mu _{j-1}+\mu _j\geq 2k+5\textgreater 2k+1+j-(j-1)$. That is,  $(j-1,j)$ is a big subscript pair.  Hence, all subscript pairs $(i, j)$ are big  by Lemma \ref{propbig} iii), implying $f(\mu ,j)=j-1$. Consequently, we denote $a:=f(\lambda, m+1-j)$ and have
   $\lambda _{m+1-j}+\mu _j>n+k-a-j+1.$
Without loss of generality, we assume
\begin{equation}\label{proofBeq}
   \lambda _{m+1-j}+\mu _j=n+k-a-j+2.
\end{equation}
(Otherwise,  $\lambda _{m+1-j}+\mu _j>n+k-a-(j-1)+1$. Let $\tilde \mu=(\mu_1-1,\cdots, \mu_j-1,$  $\mu_{j+1},\cdots, \mu_m)$.
   It still follows from $\tilde \mu_j=\mu_j-1\geq k+1$ that $f(\tilde \mu, j)=j-1=f(\mu, j)$,
 and hence $a+f(\tilde \mu ,j)> n+k-\lambda _{m+1-j}-\tilde \mu _j$. Hence, we can replace $\mu$ by $\tilde \mu$.)

Notice  that $a$ is  the unique number in Lemma \ref{propbig} iii) with respect to $m+1-j$. Thus for any $i\leq a$,
  $(i, m+1-j)$ is a big subscript pair, namely
   \begin{equation}
     \lambda _{i}+\lambda _{m+1-j}\geq 2k+1+m+1-j-i=N-m+1-j-i.
   \end{equation}
Hence,
\begin{align*}
     & \left| \lambda \right| +\left| \mu \right|\\
     \geq &\sum^{a}_{i=1}\lambda _i+\sum^{m+1-j}_{i=a+1}\lambda _i+\sum^{j}_{i=1}\mu _i\\
     \geq &\sum^{a}_{i=1}(2k+2+m-j-i-\lambda _{m+1-j})
     +(m-j-a+1)\lambda _{m+1-j}+j\mu _j+\dfrac{j(j-1)}{2}\\
    = &(k+2+n-j)a-\dfrac{a(a+1)}{2} +(n-k-j-2a+1)(n+k-a-j+2-\mu _j)\\
     &\qquad +j\mu _j+j(j-1)/2\\
    = &(2a+2j-m-1)\mu_j+L_0(j, a)\\
   =&I(j, a, \mu_j)
   \end{align*}
where the functions $L: \mathbb{R}^2\to \mathbb{R}$ and $I: \mathbb{R}^3\to \mathbb{R}$ are defined by
\begin{align*}
  L_0(x, y)&:={3\over 2}y^2+(2x-2n-{7\over 2})y+ {3\over 2}y^2-(2n+{7\over 2})x+n^2+3n-k^2-k+2,\\
  I(x, y, z)&:=(2x+2y-m-1)z+ L_0(x, y).
\end{align*}
  We want to show $I(j, a, \mu_j)\geq 2n$ for all $j\in [m]$. We notice  $k+2\leq \mu_j \leq n+k-j+1$, and $I(x, y, z)$ is linear in $z$.

\begin{enumerate}
   \item[(i)] Assume  $2a+2j-m-1\geq 0$. Then we have
     $$I(j, a, \mu_j)\geq I(j, a, k+2)={3\over 2}a^2-(2n-2k-2j-{1\over 2})a+L_1(j)$$
      with $L_1(j)$ depending only on $j$. As a quadratic function in $y$, $I(j, y, k+2)$ takes minimum value at $y= {1\over 3}(2n-2k-2j-{1\over 2})$. While $a$ is an integer,
       $I(j, a, k+2)$ takes minimum value at the integer most close to ${1\over 3}(2n-2k-2j-{1\over 2})$, or equivalently most close to  ${1\over 3}(2n-2k-2j)$. Therefore,
         $$I(j, a, \mu_j)\geq I(j, {1\over 3}(2n-2k-2j), k+2)={5\over 6}j^2-{1\over 6}(4n-4k-1)j+L_2(n, k)$$
      with $L_2(n, k)$ independent of $j$. Since $j$ is an integer,    $I(j, {1\over 3}(2n-2k-2j), k+2)$ takes minimum value at the integer most close to ${1\over 5}(2n-2k-1)$, which is most close to ${1\over 5}(2n-2k)$ as well. Noting $k=n-m$ and by direct calculations, we have
     $$I(j, a, \mu_j)\geq I({1\over 5}(2n-2k), {1\over 3}(2n-2k-2j), k+2)=2n+n(m-1)-{4\over 5}m^2+{2\over 5}m.$$ Hence,
       $I(j, a, \mu_j) \geq 2n$, following from
       \begin{equation}
        \label{ineqBm}  n(m-1)-{4\over 5}m^2+{2\over 5}m\geq 0\qquad\mbox{due to } n>m\geq 2.
       \end{equation}
   \item[(ii)]   Assume  $2a+2j-m-1\geq 0$. Then
         we have $$I(j, a, \mu_j)\geq I(j, a, n+k-j+1)= {3\over 2}a^2+(2k-{3\over 2})a+L_3(j).$$
    Since $a=f(\lambda, m+1-j)\in [0, m-j]$ and   $2k-\dfrac{3}{2}\textgreater 0$ for $m<n$,
      we have
       \begin{align}
        \label{typeBpos}  I(j, a, \mu_j)\geq I(j, 0, n+k-j+1)
          &=-\dfrac{j^2}{2} +(n+k-\dfrac{1}{2})j+(n-k+1)\\
           &\geq  I(1, 0, n+k-1+1)=2n.
       \end{align}
    Here the second inequality holds by noting the function $I(j, 0, n+k-j+1)$ in $j$ is  increasing on $[1, m]$.
\end{enumerate}
In either cases, we deduce the contradiction    $2n>\left| \lambda \right| +\left| \mu \right|\geq 2n$.
\end{proof}
\begin{remark}
  The above arguments are also valid for the case $m=n\geq 3$, except that the reason ``$2k-{3\over 2}>0$ for $m<n$" for obtaining    \eqref{typeBpos} should be replaced by
  ``$0$ and $1$ are the integers most and equally  close to ${1\over 2}-{2\over 3} \cdot 0$".  For the case $m=n=2$,  the inequality \eqref{ineqBm} does not hold, while an individual  verification can be  made easily.
\end{remark}

\begin{cor}\label{cortypeB}
Let $\lambda, \mu$ be $k$-strict partitions. If $\left| \lambda \right| +\left| \mu \right|< 2n, $ then $\sP^{\vee}(\lambda)\preceq\sP(\mu)$ and $[X_\lambda]\cup [X_\mu]\neq 0$.
\end{cor}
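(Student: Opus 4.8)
The plan is to prove the combinatorial assertion $\sP^{\vee}(\lambda)\preceq\sP(\mu)$ first, and then read off the nonvanishing $[X_\lambda]\cup[X_\mu]\neq 0$ from it. By the description of $\preceq$ as the componentwise order on index sets together with the duality formula \eqref{pjdualformulaB}, the inclusion $\sP^{\vee}(\lambda)\preceq\sP(\mu)$ is equivalent to the family of numerical inequalities
\[
p_{m+1-j}(\lambda)+p_j(\mu)\ge 2n+2,\qquad 1\le j\le m,
\]
so the whole problem reduces to establishing these $m$ inequalities. To prepare for this I would first rewrite the positions compactly: substituting the definition \eqref{defflambda} of $f$ into \eqref{pjformula} gives, for any $k$-strict partition $\nu$ and any index $l$,
\[
p_l(\nu)=n+k+l-\nu_l-f(\nu,l)+\epsilon_l(\nu),
\]
where $\epsilon_l(\nu)=1$ if $\nu_l\le k$ and $\epsilon_l(\nu)=0$ otherwise.

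Adding the expressions for $p_{m+1-j}(\lambda)$ and $p_j(\mu)$ and invoking Proposition \ref{thmtypeB} to bound $f(\lambda,m+1-j)+f(\mu,j)\le n+k-\lambda_{m+1-j}-\mu_j$, essentially all the $\lambda$- and $\mu$-dependent terms cancel, and using $n+k=2n-m$ one is left with
\[
p_{m+1-j}(\lambda)+p_j(\mu)\ge 2n+1+\epsilon_{m+1-j}(\lambda)+\epsilon_j(\mu).
\]
Thus it suffices to show that at least one of the two indicators is nonzero for every $j$, which is the only place in the corollary requiring a genuine argument. I would argue by contradiction: if both $\lambda_{m+1-j}>k$ and $\mu_j>k$, then $k$-strictness forces the initial segments to be strictly decreasing, $\lambda_1>\cdots>\lambda_{m+1-j}\ge k+1$ and $\mu_1>\cdots>\mu_j\ge k+1$, so each of the $m-1$ entries $\lambda_1,\dots,\lambda_{m-j},\mu_1,\dots,\mu_{j-1}$ is at least $k+2$, while $\lambda_{m+1-j},\mu_j\ge k+1$. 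Summing yields $\abs{\lambda}+\abs{\mu}\ge(m-1)(k+2)+2(k+1)=2n+(m-1)k\ge 2n$, contradicting the hypothesis $\abs{\lambda}+\abs{\mu}<2n$. (This is exactly the weight estimate already appearing inside the proof of Proposition \ref{thmtypeB}, and it is uniform in $m$, including the degenerate $m=1$ where $f\equiv 0$.) Hence for each $j$ at least one indicator equals $1$, the displayed bound becomes $\ge 2n+2$, and $\sP^{\vee}(\lambda)\preceq\sP(\mu)$ follows.

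It remains to deduce the nonvanishing. By \eqref{Schubertdual} and $[X^u]=[X_{w_0uw_{P_m}}]$ we have $[X_\mu]=[X^{\mu^\vee}]$, so $[X_\lambda]\cup[X_\mu]=[X^{\mu^\vee}\cap X_\lambda]$, which is nonzero precisely when $\lambda\le\mu^\vee$ in the Bruhat order (Proposition \ref{propnonvanishing}). Translating through characterization ii) of the Bruhat order and the index-set dictionary \eqref{bruhatequivB}, the condition $\lambda\le\mu^\vee$ reads $\sP^{\vee}(\mu)\preceq\sP(\lambda)$, which is nothing but the reindexing $j\mapsto m+1-j$ of the family of inequalities already proved for $\sP^{\vee}(\lambda)\preceq\sP(\mu)$. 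In summary, since essentially all the difficulty is concentrated in Proposition \ref{thmtypeB}, the main obstacle inside this corollary is purely bookkeeping: bridging the unit gap between the lower bound $2n+1$ produced directly by Proposition \ref{thmtypeB} and the required $2n+2$, a gap supplied exactly by the weight estimate in the case where both $\lambda_{m+1-j}$ and $\mu_j$ exceed $k$.
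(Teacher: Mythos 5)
Your proof is correct and follows essentially the same route as the paper's: translate $\sP^{\vee}(\lambda)\preceq\sP(\mu)$ into the componentwise inequalities via \eqref{pjformula}, \eqref{pjdualformulaB} and \eqref{defflambda}, apply Proposition \ref{thmtypeB}, and close the remaining unit gap by noting that $\lambda_{m+1-j}>k$ and $\mu_j>k$ cannot both hold (the same weight estimate used in the second paragraph of the proof of Proposition \ref{thmtypeB}). The only cosmetic difference is that you phrase the final nonvanishing step via $\lambda\leq\mu^{\vee}$ rather than $\mu\leq\lambda^{\vee}$, which is equivalent by the symmetry of the cup product.
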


\begin{proof}
Write $\sP=\mathscr{P}^\vee(\lambda)=\{p_1^\vee< \cdots<p_m^\vee\}$ and $\sQ=\sP(\mu)=\{q_1<\cdots<q_n\}$.
By the combination of the formulas   \eqref{pjformula}, \eqref{pjdualformulaB} and \eqref{defflambda}, $ \sP^{\vee}\leq \sQ$ holds if and only if for all $j\in [m]$, the following inequality holds.
 $$f(\lambda ,m+1-j)+f(\mu ,j)\leq n+k- \lambda _{m+1-j}-\mu_j
          -1+\begin{cases}
             1,&\mbox{if } \lambda_{m+1-j}\leq k\\
             0,&\mbox{if } \lambda_{m+1-j}> k
          \end{cases}+\begin{cases}
             1,&\mbox{if } \lambda_{\mu_j}\leq k\\
             0,&\mbox{if } \lambda_{\mu_j}> k
          \end{cases}.$$
 Since $\left| \lambda \right| +\left| \mu \right|< 2n$, either $\lambda_{m+1-j}\leq k$ or $\mu_{j}\leq k$ must hold (as from the second paragraph of the proof of Proposition \ref{thmtypeB}). It follows that $$0\leq -1+\begin{cases}
             1,&\mbox{if } \lambda_{m+1-j}\leq k\\
             0,&\mbox{if } \lambda_{m+1-j}> k
          \end{cases}+\begin{cases}
             1,&\mbox{if } \lambda_{\mu_j}\leq k\\
             0,&\mbox{if } \lambda_{\mu_j}> k
          \end{cases}.$$ Therefore  we have  $\sP^{\vee}(\lambda)\preceq\sP(\mu)$  by  Proposition \ref{thmtypeB} and the equivalences in   \eqref{bruhatequivB},
          and  consequently $[X_\lambda]\cup [X_\mu]\neq 0$ holds by Proposition \ref{propnonvanishing}.
    \end{proof}
   \subsection{Even orthogonal Grassmannians}\label{secevenorth}
Recall $k=n+1-m$ for $OG(m, 2n+2)$, and we restrict to $2\leq m<n$.
 Schubert varieties of $X=OG(m, 2n+2)$ are indexed by  elements in the set
   \begin{align*}
      \mathcal{P}_{2n+2}(k, n)&:=\{(\lambda, 0)\Big| {\lambda \mbox{ is a } k\mbox{-strict partition inside an } m\times (n+k) \mbox{ rectangle};\atop \lambda_j\neq k \mbox{ for all } j} \}\\
      &\quad\,\, \bigcup \{(\lambda, t)\Big| {\lambda \mbox{ is a } k\mbox{-strict partition inside an } m\times (n+k) \mbox{ rectangle};\atop
                                                      \lambda_j= k \mbox{ for some } j;\quad t\in \{1, 2\}} \}.\\
    \end{align*}
   This becomes quite  a bit more involved then odd orthogonal Grassmannians, due to the disconnectedness of $OG(n+1, 2n+2)$. There exists an alternate isotropic flag $\tilde{F}_\bullet$
   with  the properties (1) $\tilde F_{2n+2-i}^\bot=\tilde F_{i}=F_i$ for all $i\in [n]$, and (2)
  the connected component of $\tilde F_{n+1}^\bot=\tilde F_{n+1}$ in $OG(n+1, 2n+2)$ is distinct from that of $F_{n+1}$.
  Both isotropic flags are needed in defining   Schubert varieties with subscripts $(\lambda, 1)$ or $(\lambda, 2)$ by rank conditions. By \cite[Proposition 4.7]{BKT}, there is a bijection   $\Psi$ given by
  \begin{align*}
     \Psi:& \mathcal{P}_{2n+2}(k, n)\longrightarrow \mathfrak{S}(X)=\{\mathscr{P}\subset [1, 2n+2]\mid p_i+p_j\neq 2n+3,\,\, \forall i\neq j\}\\
          &\qquad (\lambda, t)\mapsto\mathscr{P}(\lambda, t)=(p_1(\lambda, t),\cdots, p_m(\lambda, t))\qquad\quad\mbox{with}
  \end{align*}
   \begin{equation} \label{pjformulaD}
         p_j=p_j(\lambda, t)=n+k-\lambda_j+\#\{i<j|\lambda_i+\lambda_j\leq N-2m-1+j-i\}+ g(\lambda, t, j)
    \end{equation}
   where the function $g: \mathcal{P}_{2n+2}(k, n)\times [m]\to \{1, 2\}$ is defined by
  \begin{equation}\label{funcgg}
   g(\lambda, t, j):=  \begin{cases}
        1,&\mbox{if } \lambda_j>k, \text{or } \lambda_j=k<\lambda_{j-1} \mbox{ and } n+j+t
            \text{ is even,}\\
        2,&\mbox{otherwise.}
        \end{cases}
  \end{equation}
The bijection $\Psi$  satisfies the following properties:
  \begin{equation}\label{eqntypepartD}
     \mbox{(i) }  \lambda_j\leq k\Longleftrightarrow p_j>n;\quad
 \mbox{(ii) } \lambda_j=k<\lambda_{j-1}\Longleftrightarrow p_j\in\{n+1,n+2\}
  \end{equation}

  The Schubert variety $X_\sP=X_{\mathscr{P}(\lambda, t)}(F_\bullet)=X_{(\lambda, t)}(F_\bullet)$     is of codimension $|\lambda|$, independent of the type $t$ of $\lambda$. We can simply denote $X_{\lambda}=X_{(\lambda, 0)}$ without confusion.
  If $n+2\notin \sP$, then
        $X_\sP=
            \{\Sigma \in X\mid \dim (\Sigma\cap F_{p_j})\geq j, \,\,\forall 1\leq j\leq m\}$, while if $n+2\in\sP$, then we have
         $$X_\sP=
             \{\Sigma \in X\mid \dim (\Sigma\cap F_{p_j})\geq j, \mbox{if } p_j\neq n+2;
                \dim (\Sigma\cap \tilde F_{n+1})\geq j, \mbox{if } p_j=n+2\} .$$
    For $\sP=\sP(\lambda, t)$, we define $\mathfrak{t}(\sP)=t$. As shown in \cite[section 4.3]{BKT}, we have
    \begin{prop}\label{bruhatD} For any $\mathscr{P}, \hat{\mathscr{P}}\in \mathfrak{S}(X)$, $X_{\mathscr{P}}\subseteq X_{\hat{\mathscr{P}}}$ if and only if both of the following hold.
\begin{enumerate}
\item $\sP \leq \hat{\sP}$;
\item if $\hat p_i=n+2 \text{ for some } i$,   then $p_i\neq n+1$.
\end{enumerate}

 \end{prop}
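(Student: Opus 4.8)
The plan is to recast both Schubert varieties through a single family of subspaces and to reduce the inclusion to a comparison of these subspaces. I would set $G_q := F_q$ for $q \neq n+2$ and $G_{n+2} := \tilde F_{n+1}$; then, by the rank-condition descriptions preceding the statement, $X_\sP = \{\Sigma \in X \mid \dim(\Sigma \cap G_{p_j}) \geq j,\ 1 \leq j \leq m\}$, and likewise for $\hat\sP$. Taking the standard representative $\tilde F_{n+1} = \langle \mathbf{e}_1, \ldots, \mathbf{e}_n, \mathbf{e}_{n+2}\rangle$ (legitimate since $\mathbf{e}_{n+2}$ is isotropic and $\tilde F_{n+1} \supseteq F_n$ lies in the component opposite to $F_{n+1}$), I would first record the elementary nesting facts $F_n \subseteq \tilde F_{n+1} \subseteq F_{n+2}$ and, more generally, $G_q \subseteq G_{q'}$ whenever $q \leq q'$, with the single exception of the pair $(q,q') = (n+1, n+2)$, where $G_{n+1} = F_{n+1} \not\subseteq \tilde F_{n+1} = G_{n+2}$. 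The whole proof turns on isolating this one failure of nestedness.

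For the sufficiency direction, I assume (1) and (2) and claim $G_{p_j} \subseteq G_{\hat p_j}$ for each $j$. Since $p_j \leq \hat p_j$, the only way nestedness can fail is the exceptional pair, i.e. $p_j = n+1$ and $\hat p_j = n+2$, which is exactly what hypothesis (2) forbids; the remaining cases follow from the recorded nesting facts (in particular $\hat p_j = n+2$ forces $p_j \leq n$ or $p_j = n+2$, both of which sit inside $\tilde F_{n+1}$). Consequently $\dim(\Sigma \cap G_{\hat p_j}) \geq \dim(\Sigma \cap G_{p_j}) \geq j$ for every $\Sigma \in X_\sP$, so $X_\sP \subseteq X_{\hat\sP}$.

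For necessity I would test the inclusion on the coordinate (torus-fixed) point $\Sigma_\sP := \langle \mathbf{e}_p : p \in \sP \rangle$. One checks $\Sigma_\sP \in X_\sP$ directly, and since the intersection of two coordinate subspaces is again coordinate, $\dim(\Sigma_\sP \cap G_q)$ equals the number of $p \in \sP$ with $\mathbf{e}_p \in G_q$. If (1) failed, say $p_j > \hat p_j$, then at most $p_1, \ldots, p_{j-1}$ contribute to $\Sigma_\sP \cap G_{\hat p_j}$, forcing $\dim(\Sigma_\sP \cap G_{\hat p_j}) \leq j-1 < j$, so $\Sigma_\sP \notin X_{\hat\sP}$. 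If (2) failed, say $\hat p_i = n+2$ while $p_i = n+1$, then the isotropy constraint $p_a + p_b \neq 2n+3$ forces $n+2 \notin \sP$, so the only $\mathbf{e}_p \in \tilde F_{n+1}$ come from indices $p \leq n$, i.e. from $p_1, \ldots, p_{i-1}$; hence $\dim(\Sigma_\sP \cap \tilde F_{n+1}) = i-1 < i$ and again $\Sigma_\sP \notin X_{\hat\sP}$. Either way the assumed inclusion is contradicted, so both (1) and (2) are necessary.

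The only genuinely delicate point is the bookkeeping around the two middle isotropic spaces $F_{n+1}$ and $\tilde F_{n+1}$: one must consistently track which index ($n+1$ versus $n+2$) selects which space, invoke the exclusion $\{n+1, n+2\} \not\subseteq \sP$ coming from $p_a + p_b \neq 2n+3$, and verify that $\tilde F_{n+1}$ is squeezed between $F_n$ and $F_{n+2}$. Once these are in place the argument is a routine dimension count, and everything else reduces to the nestedness of the filtration $G_\bullet$.
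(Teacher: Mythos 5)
Your argument is correct, and it is worth noting that the paper itself does not prove this statement at all: it is quoted directly from \cite[Section 4.3]{BKT}, so you have supplied a self-contained proof where the paper defers to a reference. Your route is the natural geometric one: you unify the two rank-condition descriptions into a single filtration $G_\bullet$ (with $G_{n+2}=\tilde F_{n+1}$), observe that $G_q\subseteq G_{q'}$ for $q\leq q'$ fails only for the pair $(n+1,n+2)$, and then get sufficiency by pointwise comparison of rank conditions and necessity by evaluating on the torus-fixed point $\Sigma_\sP$. All the delicate checks go through: $\tilde F_{n+1}=\langle \mathbf{e}_1,\dots,\mathbf{e}_n,\mathbf{e}_{n+2}\rangle$ is indeed isotropic and lies in the component opposite to $F_{n+1}$ (their intersection $F_n$ has dimension of the wrong parity); $\Sigma_\sP$ does lie in $X_\sP$ (using $n+1\notin\sP$ when $n+2\in\sP$); and in the necessity step the exclusion $p_a+p_b\neq 2n+3$ is exactly what kills the contribution of $\mathbf{e}_{n+2}$ when $p_i=n+1$. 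Compared with the combinatorial treatment in \cite{BKT} (which works through the type-$\mathrm{D}$ Weyl group and the correspondence with signed permutations), your proof buys transparency and locality --- the whole content of condition (2) is visibly the single failure of nestedness $F_{n+1}\not\subseteq\tilde F_{n+1}$ --- at the cost of relying on the rank-condition description of $X_\sP$ as a closed subvariety, which the paper has already recorded and which you are therefore entitled to use.
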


With respect to the identification $\mathcal{P}_{2n+2}(k, n)\to W^{P_m}$, $(\lambda, t)\mapsto u=u(\lambda, t)$,
  the dual   $(\lambda, i)^\vee$ is the element in  $\mathcal{P}_{2n+2}(k, n)$ that corresponds to $w_0uw_{P_m}$.
 The dual index set $\mathscr{P}^\vee=\sP(\lambda, i)^\vee:=\sP((\lambda, i)^\vee)$ is given by (see \cite[Lemma 3.3]{Ravi})
    \begin{equation}\label{pjdualformulaD}
       p_j^\vee=\begin{cases}
          2n+3-p_{m+1-j},&\mbox{if } n \mbox{ is odd or } p_{m+1-j}\not\in \{n+1, n+2\},\\
          p_{m+1-j},&\mbox{if } n \mbox{ is even and }p_{m+1-j}\in \{n+1, n+2\}.
       \end{cases}
    \end{equation}

For any $(\lambda, t)\in \mathcal{P}_{2n+2}(k, n)$ and $j\in [m]$, we associate a number  $f(\lambda, j)$ as defined in \eqref{defflambda}, which is independent of the type $t$ of $\lambda$. Notice $N-2m+1=2k+1$ for $N=2n+2$.

\begin{prop}\label{thmtypeD}
    Let  $(\lambda, t_1)$ and $(\mu, t_2)$ be  in $\mathcal{P}_{2n+2}(k, n)$, and satisfy $|\lambda|+|\mu|<2n+1$.
   \begin{enumerate}
     \item For any $j\in [m]$,   $g(\lambda, t_1, m+1-j)+g(\mu, t_2, j)\geq 3$.
     \item For any $j\in [m]$, we have $f(\lambda ,m+1-j)+f(\mu ,j)\leq n+k-\lambda _{m+1-j}-\mu _j$.
   \end{enumerate}
\end{prop}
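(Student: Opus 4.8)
The two assertions are logically independent and call for different techniques. Part (1) is a self-contained counting estimate, whereas part (2) runs in close parallel to the proof of Proposition \ref{thmtypeB}, with the constant $2n$ there replaced by $2n+1$ and $k=n-m$ replaced by $k=n+1-m$. I would prove (1) first, since it is quick and clarifies the role of the hypothesis $|\lambda|+|\mu|<2n+1$.

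For part (1) the plan is to argue by contradiction. Writing $s:=m+1-j$, so that the two relevant indices satisfy $s+j=m+1$, suppose $g(\lambda,t_1,s)=g(\mu,t_2,j)=1$; since $g$ takes values in $\{1,2\}$, this is the only way the sum can fail to be at least $3$. By the definition \eqref{funcgg} of $g$, the equality $g(\lambda,t_1,s)=1$ forces $\lambda_s\geq k$, and moreover $\lambda_{s-1}>k$ whenever $\lambda_s=k$ (using $\lambda_0=n+k+1$ when $s=1$); likewise $\mu_j\geq k$, with $\mu_{j-1}>k$ whenever $\mu_j=k$. In each situation the $k$-strictness of $\lambda$ (distinct parts above $k$) gives $\sum_{i=1}^{s}\lambda_i\geq sk+\binom{s}{2}$, and similarly $\sum_{i=1}^{j}\mu_i\geq jk+\binom{j}{2}$. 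Adding these and using $s+j=m+1$ yields
\[
|\lambda|+|\mu|\ \geq\ (m+1)k+\binom{s}{2}+\binom{j}{2}.
\]
The plan is then to observe that $(m+1)k=(m+1)(n+1-m)\geq 2n$, since the difference is linear and increasing in $n$ and vanishes at $n=m+1$, while $\binom{s}{2}+\binom{j}{2}\geq 1$ because $s+j=m+1\geq 3$ forces one of $s,j$ to be at least $2$. This gives $|\lambda|+|\mu|\geq 2n+1$, contradicting the hypothesis, and hence proves (1). Note that $2\leq m<n$ enters precisely through these two elementary bounds; the parity conditions buried in \eqref{funcgg} play no role here.

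For part (2) I would follow the template of Proposition \ref{thmtypeB} essentially verbatim, again setting $s=m+1-j$ and arguing by contradiction from $f(\lambda,s)+f(\mu,j)>n+k-\lambda_s-\mu_j$. The bounds $f(\lambda,s)\leq s-1$ and $f(\mu,j)\leq j-1$ first give $\lambda_s+\mu_j>2k$; one then checks that at most one of $\lambda_s,\mu_j$ can exceed $k$, since two heavy initial segments would force $|\lambda|+|\mu|\geq(m+1)(k+1)+\binom{s}{2}+\binom{j}{2}\geq 2n+3$, violating the hypothesis. Thus without loss of generality $\lambda_s\leq k$ and $\mu_j\geq k+1$, whence $\mu_{j-1}>\mu_j$, the pair $(j-1,j)$ is big, and $f(\mu,j)=j-1$ by Lemma \ref{propbig}(iii). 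Setting $a:=f(\lambda,s)$ and reducing to the boundary case $\lambda_s+\mu_j=n+k-a-j+2$ by the same subtraction trick as before, the bigness of the pairs $(i,s)$ for $i\leq a$ lets me bound $|\lambda|+|\mu|$ below by a function $I(j,a,\mu_j)$ that is linear in $\mu_j$ and quadratic in $a$ and $j$, with coefficients built from the even-type constants.

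I expect the main obstacle to be the endgame of part (2): one must re-derive the analogues of the auxiliary functions $L_0$ and $I$ for $N=2n+2$, split into the two cases according to the sign of $2a+2j-m-1$, minimize the resulting quadratics over the admissible integer ranges of $a\in[0,m-j]$ and $j\in[m]$, and verify that the minimum is at least $2n+1$. As in \eqref{ineqBm}, this last step should reduce to an elementary polynomial inequality in $m$ and $n$ valid for $2\leq m<n$, together with a separate check of any small exceptional pairs $(m,n)$ exactly as in the remark following Proposition \ref{thmtypeB}. Confirming that inequality is the only genuinely delicate computation; part (1), by contrast, is immediate once the counting bound above is in place.
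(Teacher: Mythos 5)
Your strategy coincides with the paper's: part (1) via the counting bound $\sum_{i\le s}\lambda_i\ge sk+\binom{s}{2}$, and part (2) by transplanting the proof of Proposition \ref{thmtypeB}. Part (1) is complete and correct; your finish ($(m+1)k\ge 2n$ together with $\binom{s}{2}+\binom{j}{2}\ge 1$) is even marginally cleaner than the paper's, which lands at $2n+1-\tfrac14$ and appeals to integrality of $|\lambda|+|\mu|$.

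In part (2) there is one concrete gap. You exclude only the event that \emph{both} $\lambda_s$ and $\mu_j$ exceed $k$, which combined with $\lambda_s+\mu_j>2k$ yields merely $\mu_j\ge k+1$. What is actually needed (and what the paper proves) is that $\lambda_s$ and $\mu_j$ cannot both be $\ge k$: if they were, then one of them exceeds $k$ since their sum exceeds $2k$, and already $\sum_{i\le s}\lambda_i+\sum_{i\le j}\mu_i\ge (m+1)k+\min(s,j)\ge 2n+1$, a contradiction. This gives $\lambda_s\le k-1$ and hence $\mu_j\ge k+2$, not just $k+1$. The difference is not cosmetic: in the endgame, the case where the coefficient of $\mu_j$ in the lower-bounding function $J$ is nonnegative is settled by evaluating at the \emph{lower} endpoint $\mu_j=k+2$; with only $\mu_j\ge k+1$ the bound drops by up to $2a+2j-m-1$, and the margin over $2n+1$ (which is only $\tfrac15(m^2+6m-12)$ when $n=m+1$) is not obviously large enough to absorb that loss. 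The bound $\mu_j\ge k+2$ is also what guarantees $f(\tilde\mu,j)=j-1$ survives the subtraction trick. Separately, you explicitly defer the re-derivation and minimization of the quadratic $J(j,a,\mu_j)$ over $a\in[0,m-j]$ and $j\in[m]$; since that case analysis is where essentially all the work of part (2) resides, the proposal as written sets up the correct framework but does not yet establish the estimate.
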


\begin{proof} We give the proof  by contradiction.

(1) Assume $g(\lambda, t_1, m+1-j)+g(\mu, t_2, j)<3$ for some $j$, then it follows from the definition in \eqref{funcgg} that
 $\lambda_{m+1-j}\geq k$ and $\mu_j\geq k$ both hold.
        Then we have
    \begin{equation*}
        \begin{split}
            |\lambda|+|\mu| \geq\sum_{i=1}^{m+1-j}\lambda_i+\sum_{i=1}^{j}\mu_j
            &\geq\sum_{i=0}^{m-j}(k+i)+\sum_{i=0}^{j-1}(k+i)\\
            & =(m+1)k+(j-{m+1\over 2})^2+ \frac{m^2-1}{4}\\
            & \geq (m+1)k+\frac{m^2-1}{4}\\
            &= 2n+1+n(m-1)-{3\over 4}m^2-{1\over 4}.
        \end{split}
    \end{equation*}
 The last equality holds by noting $k=n+1-m$. Since $2\leq m<n$,
    \begin{equation}\label{typeDmlarge}
      n(m-1)-{3\over 4}m^2-{1\over 4}\geq (m+1)(m-1) -{3\over 4}m^2-{1\over 4}={1\over 4}(m^2-5)\geq -{1\over 4}.
    \end{equation}
    This implies a contradiction  $2n+1>|\lambda|+|\mu|\geq 2n+1-{1\over 4}$, since  $|\lambda|+|\mu|$ is an integer.

(2)  Assume $f(\lambda ,m+1-j)+f(\mu ,j)> n+k-\lambda _{m+1-j}-\mu _j$ for some $j\in [m]$. Then
$$\lambda _{m+1-j}+\mu _j>n+k-(m-j)-(j-1)=2k$$
   First notice that either $ \lambda _{m+1-j}< k$ or $\mu_j< k$ must hold. Otherwise, they are both larger than or equal to $k$, and either of them must be larger than $k$, say $\mu_j$; then we would deduce the following contradiction.
  \begin{align*}
     2n+1>|\lambda|+|\mu|\geq\sum_{i=1}^{m+1-j}\lambda_i+\sum_{i=1}^{j}\mu_j &\geq (m+1-j)k+j(k+1)\\
      &=(m+1)(n+1-m)+j\\
      &=2n+j-m^2+n(m-1)+1\\
      &\geq 2n+1-m^2+(m+1)(m-1)+1=2n+1.
  \end{align*}

Without loss of generality, we assume $ \lambda _{m+1-j}< k$, which implies   $\mu_j\geq k+2$ and consequently  $f(\mu, j)=j-1$.
 Hence, we have   $\lambda_{m+1-j}+\mu_j
    >n+k-a-j+1$, with $a$ as in Lemma \ref{propbig}, iii).   Without loss of generality, we can assume
    \begin{equation}
        \lambda_{m+1-j}+\mu_j=n+k+2-j-a
    \end{equation}
    (Otherwise, we can replace $\mu$ by  $\tilde{\mu}:=(\mu_1-1,\mu_2-1,\cdots,\mu_j-1,\mu_{j+1},\cdots,\mu_m)$ by the same arguments as for \eqref{proofBeq} in the proof of Proposition \ref{thmtypeB}.)
    Thus for any $i\leq a$,
  $(i, m+1-j)$ is a big subscript pair, namely $ \lambda_i+\lambda_{m+1-j}\geq 2k+m+1-j-i
    $.
   Hence we have
    \begin{align*}
              |\lambda|+|\mu|&\geq\sum_{i=1}^{a}\lambda_i+\sum_{i=a+1}^{m+1-j}
            \lambda_i+\sum_{i=1}^{j}\mu_j\\
            &\geq\sum_{i=1}^{a}(2k+m+1-j-i-\lambda_{m+1-j})+(m+1-j-a)\lambda_{m+1-j}
            +j\mu_j+\frac{j(j-1)}{2}\\
            &=  (k+n+2-j)a-\frac{a(a+1)}{2}+(n-k+2-j-2a)(n+k+2-j-a-\mu_j)\\
             &\qquad +j\mu_j+
            \frac{j(j-1)}{2}\\
            &=J(j, a, \mu_j)
    \end{align*}
    where $J(x, y, z)=I(x, y, z)-x-y-z+n+k+3$ with $I(x, y, z)$ defined in the proof of Proposition \ref{thmtypeB}. By using the same analysis as therein, we conclude the following:
    \begin{enumerate}
        \item[(i)] If  $2a+2j-n+k-2\geq 0$, then
           $$J(j, a, \mu_j)\geq J({1\over 5}(2n-2k), {1\over 3}(2n-2k-2j), k+2)=2n+1+n(m-1)-{1\over 5}(4m^2-6m+7). $$
        Since $2\leq m<n$ for $N=2n+2$, we have
        $$n(m-1)-{1\over 5}(4m^2-6m+7) \geq (m+1)(m-1)-{1\over 5}(4m^2-6m+7)={1\over 5}(m^2+6m-12)>0.$$
     \item[(ii)] If  $2a+2j-n+k-2< 0$,
     then $J(j, a, \mu_j)\geq J(1, 0, n+k-j+1)=2n+1$.
 \end{enumerate}
In either cases, we deduce the contradiction    $2n+1>\left| \lambda \right| +\left| \mu \right|\geq 2n+1$.
\end{proof}

\begin{cor}\label{cortypeD}
    Let  $(\lambda, t_1)$ and $(\mu, t_2)$ be  in $\mathcal{P}_{2n+2}(k, n)$, and satisfy $|\lambda|+|\mu|<2n+1$.
  Then $\sP^\vee(\lambda, t_1)\preceq \sP(\mu, t_2)$ and $X_{(\lambda, t_1)}\cup X_{(\mu, t_2)}\neq 0$.
  \end{cor}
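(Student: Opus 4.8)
The plan is to follow the template of Corollary \ref{cortypeB}, now invoking the two-part Proposition \ref{thmtypeD} in place of Proposition \ref{thmtypeB} and taking extra care of the middle index values $n+1$ and $n+2$ that are special to the even orthogonal case. Writing $\sP^\vee = \sP^\vee(\lambda, t_1) = \{p_1^\vee < \cdots < p_m^\vee\}$ and $\sQ = \sP(\mu, t_2) = \{q_1 < \cdots < q_m\}$, the assertion $\sP^\vee(\lambda, t_1)\preceq \sP(\mu, t_2)$ means $X_{\sP^\vee}\subseteq X_{\sQ}$, which by Proposition \ref{bruhatD} reduces to verifying the two conditions: (1) $p_j^\vee \leq q_j$ for every $j\in[m]$, and (2) that $q_i = n+2$ forces $p_i^\vee \neq n+1$. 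Once both hold, the dual index set $\sP^\vee(\lambda,t_1)$ corresponds to $w_0 u(\lambda,t_1) w_{P_m}$, so the nonvanishing $[X_{(\lambda, t_1)}]\cup [X_{(\mu, t_2)}]\neq 0$ will follow from Proposition \ref{propnonvanishing} exactly as in Corollary \ref{cortypeB}.

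For condition (1) I would substitute the index set formula \eqref{pjformulaD} for both $q_j$ and $p_{m+1-j}$, rewrite the cardinalities through $f$ by means of \eqref{defflambda}, and use $n+k = 2n+1-m$ to simplify. In the generic situation where \eqref{pjdualformulaD} gives $p_j^\vee = 2n+3 - p_{m+1-j}$, the inequality $p_j^\vee\leq q_j$ collapses after cancellation to
\[
   f(\lambda, m+1-j) + f(\mu, j) \;\leq\; n + k - 3 - \lambda_{m+1-j} - \mu_j + g(\lambda, t_1, m+1-j) + g(\mu, t_2, j).
\]
Here Proposition \ref{thmtypeD} is tailored precisely to close this: part (1) supplies $g(\lambda, t_1, m+1-j) + g(\mu, t_2, j)\geq 3$, absorbing the constant $-3$, and part (2) then gives $f(\lambda, m+1-j)+f(\mu, j)\leq n+k-\lambda_{m+1-j}-\mu_j$, which is exactly what remains. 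This settles condition (1) in the generic branch.

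The genuinely new difficulty, and the step I expect to be the main obstacle, is the exceptional branch of \eqref{pjdualformulaD} (valid when $n$ is even and $p_{m+1-j}\in\{n+1,n+2\}$, where $p_j^\vee = p_{m+1-j}$) together with condition (2); both involve the delicate middle values. By \eqref{eqntypepartD} the offending configurations all force $\lambda_{m+1-j} = k$ together with $\mu_j \geq k$ of a type-relevant form, and I would rule these out using the hypothesis $|\lambda|+|\mu| < 2n+1$. Concretely, when $\lambda_{m+1-j}=k<\lambda_{m-j}$ the parts of $\lambda$ exceeding $k$ are strictly decreasing, and likewise for $\mu$ when $\mu_j\geq k$; a summation in the spirit of the proof of Proposition \ref{thmtypeD} then yields $|\lambda|+|\mu|\geq 2n+(m-1)(n-m)\geq 2n+1$, a contradiction since $2\leq m<n$. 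This eliminates the bad case of condition (2) (where $q_i=n+2$ and $p_i^\vee=n+1$ simultaneously force $\mu_i=k=\lambda_{m+1-i}$) and also the only delicate sub-cases of the exceptional branch of (1) (where $p_j^\vee\in\{n+1,n+2\}$ but $q_j$ is too small, forcing $\lambda_{m+1-j}=k$ and $\mu_j\geq k$); the remaining sub-cases, such as $p_j^\vee\in\{n+1,n+2\}$ with $\mu_j<k$, follow at once from the equivalence $\lambda_j\leq k\Leftrightarrow p_j>n$ in \eqref{eqntypepartD}. Assembling these verifications confirms both conditions of Proposition \ref{bruhatD}, and then Proposition \ref{propnonvanishing} finishes the proof.
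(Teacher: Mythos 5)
Your proposal is correct and follows essentially the same route as the paper: reduce to the two conditions of Proposition \ref{bruhatD}, translate $p_j^\vee\leq q_j$ via \eqref{pjformulaD} and \eqref{defflambda} into the inequality $f(\lambda,m+1-j)+f(\mu,j)\leq n+k-3-\lambda_{m+1-j}-\mu_j+g(\lambda,t_1,m+1-j)+g(\mu,t_2,j)$ settled by Proposition \ref{thmtypeD}, and dispose of the middle values $n+1,n+2$ using \eqref{eqntypepartD} together with the weight bound $|\lambda|+|\mu|<2n+1$. The only cosmetic difference is that the paper packages the exceptional branch of \eqref{pjdualformulaD} by replacing $p_j^\vee$ with $\bar p_j^\vee=2n+3-p_{m+1-j}$ and showing the two comparisons are equivalent, whereas you rule out the delicate sub-cases by the same summation argument that underlies Proposition \ref{thmtypeD}(1).
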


\begin{proof}
We simply denote $\sP^\vee:=\sP^\vee(\lambda, t_1)$ and $\hat{\sP}:=\sP(\mu, t_2)$. By Proposition \ref{thmtypeD} (1) and the definition of $g$ in \eqref{funcgg},
    $\lambda_{m+1-j}=k<\lambda_{m-j}$ and  $\mu_j= k<\mu_{j-1}$ cannot both hold.
Then by   property (ii)  in \eqref{eqntypepartD},
 we have  $p_i^\vee\neq n+1$ whenever $\hat p_i=n+2$.

  Set $\bar p^{\vee}_j=2n+3-p_{m+1-j}$ for all $j\in [m]$. Then $\bar p_j^\vee =p_j^\vee$ except when $n$ is even and $p_{m+1-j} \in \{n+1,n+2\}$.
 In the exception, $\bar p_j^\vee\in\{n+1, n+2\}$,  $\lambda_{m+1-j}=k<\lambda_{m-j}$  and consequently $\hat p_j\not\in \{n+1, n+2\}$ by property (ii)  in \eqref{eqntypepartD}.
  Thus  $\sP^\vee\leq \hat{\sP}$ if and only if for all $j\in [m]$, $\bar p_j^\vee\leq \hat p_j$ holds,  which is equivalent to   the   following inequality:
   \begin{align*}
&f(\lambda ,m+1-j)+f(\mu ,j)
 \leq n+k- \lambda _{m+1-j}-\mu_j
          -3+g(\lambda, t_1, m+1-j)+g(\mu, t_2, j).
          \end{align*}
 Since  $|\lambda| +|\mu|< 2n+1$, the above inequality does hold by Proposition \ref{thmtypeD}.
Hence,   $\sP^\vee(\lambda, t_1)\preceq \sP(\mu, t_2)$ holds by Proposition \ref{bruhatD}. Consequently,  $X_{(\lambda, t_1)}\cup X_{(\mu, t_2)}\neq 0$ by Proposition \ref{propnonvanishing}.
\end{proof}

\subsection{Proof of Theorem \ref{mainthm1} for classical   types }$\mbox{}$ \label{Pfclassical}

  (i) Case $\mbox{B}_n(m)$ with  $m<n$. Then $k=n-m$. By Corollary \ref{cortypeB}, $[X_\lambda]\cup [X_\mu]\neq 0$  if $|\lambda|+|\mu|<2n$. Thus
$\mbox{e.d.}(G/P_m)\geq 2n-1$. By \eqref{Chernprod}, $[X_{(1, \cdots, 1)}]\cup [X_{(n+k, 0,\cdots, 0)}]=0$. Thus
   $\mbox{e.d.}(G/P_m)<m+n+k=2n$. Hence, $\mbox{e.d.}(G/P_m)=2n-1$.

  (ii) Case $\mbox{D}_{n+1}(m)$ with  $2\leq m<n$. Then $k=n+1-m$. By Corollary \ref{cortypeD}, we have $\mbox{e.d.}(G/P_m)\geq 2n$. By   \eqref{Chernprod},
   $[X_{(1, \cdots, 1)}]\cup [X_{(n+k, 0,\cdots, 0)}]=0$. Thus
   $\mbox{e.d.}(G/P_m)<m+n+k=2n+1$. Hence, $\mbox{e.d.}(G/P_m)=2n$.

  (iii) Cases $\mbox{A}_{n}(m)$, $\mbox{B}_{n}(n)\cong\mbox{D}_{n+1}(n)\cong\mbox{D}_{n+1}(n+1)$ and $\mbox{D}_{n+1}(1)$. These together
       with cases $\mbox{C}_{n}(1)$, $\mbox{E}_{6}(1)\cong\mbox{E}_{6}(6)$ and $\mbox{E}_{7}(7)$  are a special class of Grassmannians, called \textit{minuscule Grassmannians}.
     There are nice    properties of minuscule Grassmannians. For instance, we have
         $c_1(G/P_m)=\mbox{h}(\mathcal{D})[X_{s_m}]$ (see e.g. \cite[section 2.1]{CMP}). Thus the quantum variable $\bar q_m$ in
         $QH^*(G/P_m)=H^*(G/P_m)[\bar q_m]$ has degree $$\deg \bar q_m=\langle c_1(G/P_m), [X^{s_m}]_h\rangle=\mbox{h}(\mathcal{D}),$$ the Coxeter number of the Dynkin diagram of $G$.
Therefore for any $u, v\in W^{P_m}$ with $\ell(u)+\ell(v)<\mbox{h}(\mathcal{D})$, by the nonvanishing property  (Proposition \ref{quantumnonvanishi}), we have
 $$ [X_u]\cup [X_v]=[X_u]\star [X_v]\neq 0.$$
 The equality follows immediately from the $\mathbb{Z}$-graded algebra structure of $QH^*(G/P_m)$.
 Hence $\mbox{e.d.}(\mathcal{D}(m))\geq \mbox{h}(\mathcal{D})-1$.

  \begin{enumerate}
     \item  Considering \eqref{bundleseq} for type $\mbox{A}_n$, we have $0=c_m(\mathcal{S}^\vee)\cup c_{n+1-m}(\mathcal{Q})=[X_{(1, \cdots, 1)}]\cup [X_{(n+1-m, 0,\cdots, 0)}]$. Hence $\mbox{e.d.}(\mbox{A}_n(m))<m+n+1-m=n+1= \mbox{h}(\mbox{A}_n)$.
     \item For $\mbox{B}_n(n)$, by \eqref{Chernprod} we have $[X_{(1, \cdots, 1)}]\cup [X_{(n+k, 0,\cdots, 0)}]=0$, hence $\mbox{e.d.}(\mbox{B}_n(n))<m+n+k=2n= \mbox{h}(\mbox{B}_n)$.
     \item  $\mbox{D}_{n+1}(1)$   is a quadric hypersurface $Q$ in $\mathbb{P}^{2n+1}$ of (complex) dimension  $2n=\mbox{h}(\mbox{D}_{n+1})$.
     Since $\dim H^{2n}(Q)>1$ , for any $u\in W^{P_1}$ with $\ell(u)=n$, there exists $v\in W^{P_1}$ (possibly $v=u$) with $\ell(v)=n=\dim Q-n=\ell(w_0uw_{P_1})$,  such  that $v\not \leq w_0uw_{P_1}$. It follows that $[X_u]\cup [X_v]=0$ by Proposition \ref{propnonvanishing}. (In fact $H^{2n}(Q)=\mathbb{Z}[X_{s_n\cdots s_1}]+\mathbb{Z}[X_{s_{n+1}s_{n-1}s_{n-2}\cdots s_1}]$, and the cup of such distinct Schubert classes vanishes.) Hence,  $\mbox{e.d.}(\mbox{D}_{n+1}(1))<2n =\mbox{h}(\mbox{D}_{n+1})$.
     \item $\mbox{C}_n(1)=\mathbb{P}^{2n-1}=\mbox{A}_{2n-1}(1)$. Thus $\mbox{e.d.}(\mbox{C}_n(1))=2n-1<\mbox{h}(\mbox{C}_n)$.
  \end{enumerate}
  In all cases, we have $\mbox{e.d.}(\mathcal{D}(m))<\mbox{h}(\mathcal{D})$. Hence, $\mbox{e.d.}(\mathcal{D}(m))=\mbox{h}(\mathcal{D})-1$.

 As it will be proved in the next section, we have $\mbox{e.d.}(\mbox{E}_6(6))=12=\mbox{h}(\mbox{E}_6)$ and $\mbox{e.d.}(\mbox{E}_7(7))=19=\mbox{h}(\mbox{E}_7)+1$.

(iv) Case $\mbox{C}_n(m)$. The Weyl group of types $\mbox{B}_n$ and $\mbox{C}_n$ are identical. The Schubert structure constants for complete flag variety of type
types $\mbox{B}_n$ and $\mbox{C}_n$ are the same up to a power of $2$. Write $[X_\lambda]\cup [X_\mu]=\sum_\nu e_{\lambda, \mu}^\nu [X_\nu]$ for $H^*(SG(m, 2n))$ and   $[X_\lambda]\cup [X_\mu]=\sum_\nu f_{\lambda, \mu}^\nu [X_\nu]$ for $H^*(OG(m, 2n+1))$. As a special case \cite[section 2.2]{BKT} of \cite[section 3.1]{BeSo}, we have
    $$ f_{\lambda, \mu}^\nu=2^{\ell_k(\nu)-\ell_k(\lambda)-\ell_k(\mu)}e_{\lambda, \mu}^\nu$$
    where $\ell_k(\lambda)$ denotes the number of parts $\lambda_k$ which are strictly greater than $k$. In particular, we have
        $\mbox{e.d.}(\mbox{C}_n(m))=\mbox{e.d.}(\mbox{B}_n(m))=2n-1=\mbox{h}(\mbox{C}_n)-1$.    $\hfill\square$

        \section{Proof of Theorem \ref{mainthm1} for exceptional Lie types }
 In this section, we verify Theorem \ref{mainthm1} for types E and F with the help of computer computations.
 There are three rational homogeneous varieties of type $\mbox{G}_2$ in total, which are all of very small dimension (equal to $5$ or $6$). A full table of all the (quantum) products has been obtained (see for instance \cite[Table 4]{ELM}), from which we can read off $\mbox{e.d.}(\mbox{G}_2(1))=\mbox{e.d.}(\mbox{G}_2(2))=5$ immediately.

By Proposition \ref{propnonvanishing}, it suffices to find incomparable pair $(u, w)$ (i.e. $u, v\in W^{P_m}$ with $u\neq w$) such that $\ell(u)+\ell(w_0ww_{P_m})=\ell(u)+\dim G/P_m-\ell(w)$ as small as possible. The minimum is equal to $\mbox{e.d.}(G/P_m)+1$. We interchange $G/P_m$ with the notation $\mathcal{D}(m)$ freely, where $\mathcal{D}$ is the Lie type of $G$. Effective good divisibility of  $\mbox{E}_6(1)$ and $\mbox{E}_7(7)$ (i.e. of  Cayley plane and  Freudenthal variety)  have been implicitly contained in \cite{CMP}.
Indeed, we can read off $\mbox{e.d.}(\mathcal{D}(m))$ from the relevant Hasse diagrams therein. The Hasse diagram of  $W^{P_m}$ is a graphical rendering of  the Bruhat order.
A marking point in the Hasse diagram   represents an element in $W^{P_m}$.  For a graph  drawn horizontally (resp. vertically), a marking is on the  $j$-th column (resp. $j$-th row) if and only if the corresponding element  in $W^{P_m}$ is of  length $j$. We also call such marking of length $j$.
A directed path in
 the horizontal (resp. vertical) Hasse diagram is a path in which each edge  goes from right to   left (resp. from down to up).
For any $u, w\in W^{P_m}$, $u\leq w$ if and only if there is a directed path  connecting the markings   $u, v$.

 For cases $\mbox{E}_6(1)$ and $\mbox{E}_7(7)$,  we include a part of the Hasse diagram from \cite{CMP} below, which are sufficient for our purpose. Therein  $\sigma_j$ is a Schubert class in $H^{2j}(\mathcal{D}(m))$.

        \begin{center}
    \includegraphics[scale=0.42]{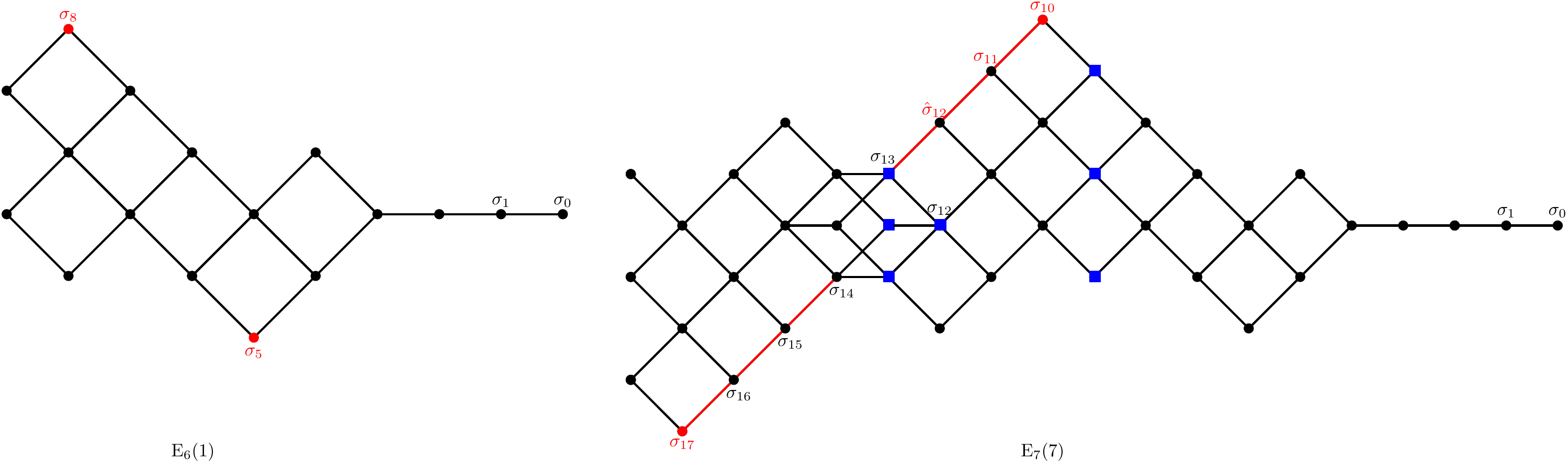}
   \end{center}


\noindent Indeed, in the Hasse diagram of $\mbox{E}_7(7)$, it is obvious that there does not exist any directed path connecting the markings $u=\sigma_{10}$ and $w=\sigma_{17}$ (since
  the two paths $\sigma_{10}\!\!-\!\!\!-\!\sigma_{11}\!\!-\!\!\!-\!\hat \sigma_{12}\!\!-\!\!\!-\!\sigma_{13}$ and $\sigma_{14}\!\!-\!\!\!-\!\sigma_{15}\!\!-\!\!\!-\! \sigma_{16}\!\!-\!\!\!-\!\sigma_{17}$  cannot be connected).
It follows that $u\not\leq w$ with $\ell(u)=10$ and $\ell(w)=17$. Thus for $v:=w_0ww_{P_m}\in W^{P_m}$, we have $\ell(v)=\dim \mbox{E}_7(7)-\ell(w)=27-17=10$ and
  $[X_u]\cup [X_v]=0$. Hence, $\mbox{e.d.}(\mbox{E}_7(7))<\ell(u)+\ell(v)=20$. Furthermore for any $\tilde u, \tilde v\in W^{P_m}$ with $\ell(\tilde u)+\ell(\tilde v)=19$,
  we conclude  $[X_{\tilde u}]\cup [X_{\tilde v}]\neq 0$, i.e. $\tilde u\leq w_0\tilde v w_{P_m}=:\tilde w$, or  equivalently for any
     marking $\tilde u$ of length $j$ and any marking $\tilde w$ of length $8+j$ ($=\dim \mbox{E}_7(7)-(19-\ell(\tilde u)))$ for any $1\leq j\leq 9$. This does hold
      for the fact that any marking of length $13$ or $9$ can be connected to the marking $\sigma_{12}$ of length $12$ by some directed path. For $j=0,\ldots,4$, the statement is obvious. It follows that for $5\leq j\leq 9$,
      any marking of length $j$ can be connected to another marking of length $8+j$ by a directed path (that passes the markings in squares: a marking of length 9, the marking $\sigma_{12}$ and a marking of length $13$).  For $\mbox{E}_6(1)$, we consider the incomparable pair
       $(\sigma_5, \sigma_8)$, which implies   $\mbox{e.d.}(\mbox{E}_6(1))<\ell(\sigma_5)+(\dim \mbox{E}_6(1)-\ell(\sigma_8))=5+(16-8)=13$.
        It is  obvious that  for $0\leq j\leq 6$,   any
     marking   of length $j$ and any marking   of length $4+j (=16-(12-j))$ can be connected by some directed path. In conclusion, we have
$$\mbox{e.d.}(\mbox{E}_6(1))=12,\qquad \mbox{e.d.}(\mbox{E}_7(7))=19.$$
 It is also easy to obtain  $\mbox{e.d.}(\mathcal{D}(m))$ by investigating a part of the Hasse diagram for the cases of $\mbox{F}_4(1), \mbox{F}_4(2), \mbox{E}_6(2)$ and $\mbox{E}_7(1)$. We provide them  in Figure \ref{hassediag} (starting with the divisor class $\sigma_1$) for the convenience of  the interested readers, where we have marked four
   incomparable pairs $(\sigma_5, \sigma_7), (\sigma_4, \sigma_{9}), (\sigma_4, \sigma_{10}), (\sigma_6, \sigma_{10})$. 
  \begin{figure}
  \caption{Part of Hasse diagram of $\mathcal{D}(m)$}\label{hassediag}
   \begin{center}

    \includegraphics[scale=0.40]{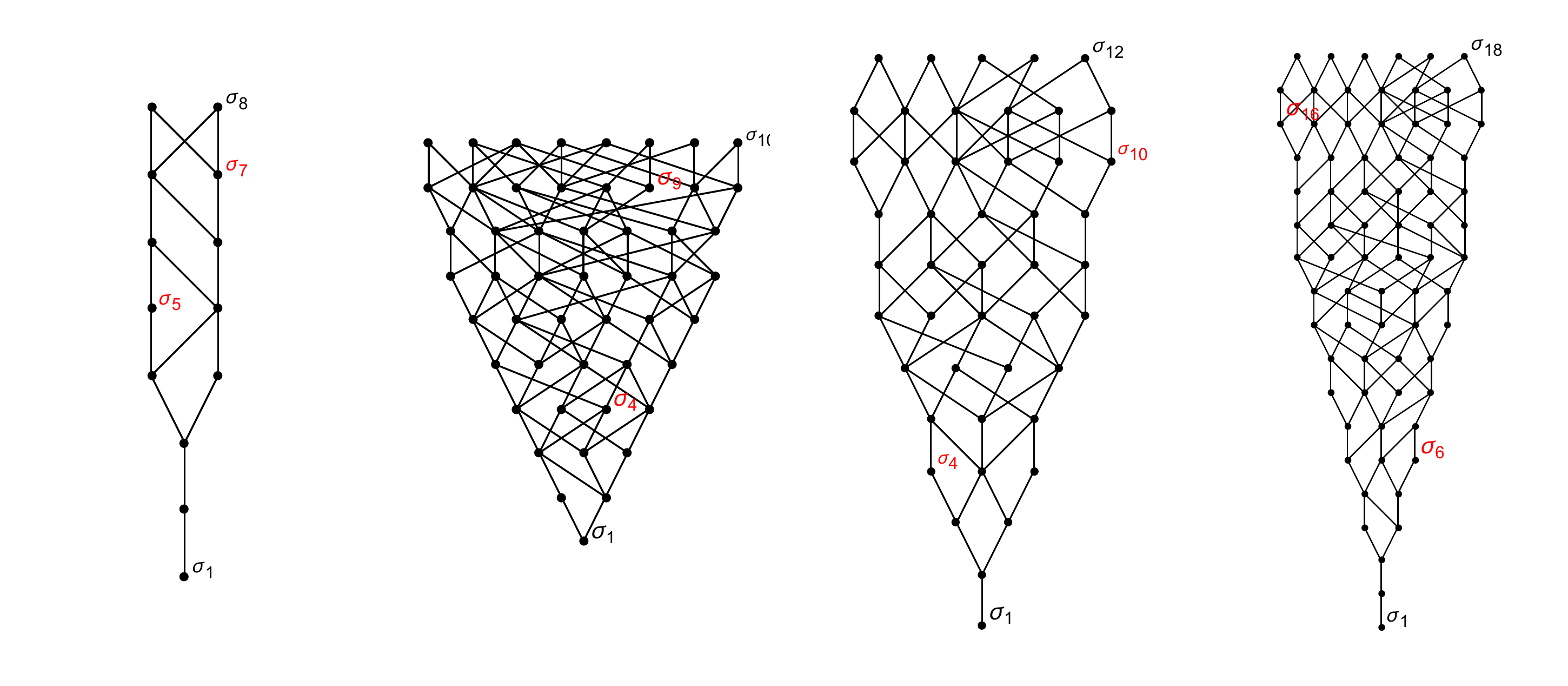}

   \hspace{-0.3cm} $F_4(1)$  \hspace{1.9cm}  $F_4(2)$  \hspace{1.9cm}  $E_6(2)$ \hspace{1.8cm} $E_7(1)$
  \end{center}
 \end{figure}

 In all cases, we can specify an incomparable pair $(u, w)$ in Table \ref{tabuw}, which results in the coincidence of $L:=\ell(u)+\ell(v)$ with
  $\mbox{e.d.}(G/P_m)+1$ (where $v=w_0ww_{P_m}$) after checking $\tilde u\leq \tilde w$ for any $\ell(\tilde u)=i$ and $\ell(\tilde w)=i+L-1-\dim \mathcal{D}(m)$, for all  $1\leq i\leq {L-1\over 2}$   (see \cite{HLL} for the codes by Mathematica 10.0). It follows that $\mbox{e.d.}(G/P_m)=\ell(u)+\ell(v)-1$.
  Comparing the quantity e.d. in Table \ref{tabed} with the quantity $L$ in  Table \ref{tabuw}, we conclude the following immediately.

  \begin{prop}
    The good effective divisibility {\upshape $\mbox{e.d.}(\mathcal{D}(m))$} for $\mathcal{D}$ of exceptional type is precisely given in Table \ref{tabed}.
 \end{prop}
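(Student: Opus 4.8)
The plan is to pin down, for each exceptional Grassmannian $\mathcal{D}(m)$, the value
$$\mbox{e.d.}(\mathcal{D}(m))+1=\min\{\ell(u)+\ell(v)\mid u,v\in W^{P_m},\ [X_u]\cup[X_v]=0\},$$
and to check it against Table \ref{tabed}. The first reduction is that it suffices to test cup products of \emph{Schubert} classes rather than of arbitrary effective classes: the Schubert basis has nonnegative structure constants, so writing two effective classes as nonnegative combinations of Schubert classes shows that no cancellation can occur. Hence the product of two effective classes vanishes precisely when $[X_u]\cup[X_v]=0$ for some pair occurring with positive coefficient. By Proposition \ref{propnonvanishing}, $[X_u]\cup[X_v]=0$ is equivalent to $u\not\leq w_0vw_{P_m}$ in the Bruhat order, so the entire problem is encoded in the Bruhat order on $W^{P_m}$.

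First I would establish the upper bound. Setting $w:=w_0vw_{P_m}$, so that $\ell(v)=\dim\mathcal{D}(m)-\ell(w)$, the task is to exhibit an incomparable pair $u\not\leq w$ minimizing $L:=\ell(u)+\dim\mathcal{D}(m)-\ell(w)$; such a pair forces $\mbox{e.d.}(\mathcal{D}(m))\leq L-1$. For each remaining case I would record an explicit witnessing pair $(u,w)$ in Table \ref{tabuw}, exactly as was done by hand for $\mbox{E}_6(1)$ and $\mbox{E}_7(7)$ by reading off incomparable Schubert classes from the Hasse diagrams of \cite{CMP}, and for $\mbox{F}_4(1),\mbox{F}_4(2),\mbox{E}_6(2),\mbox{E}_7(1)$ from Figure \ref{hassediag}.

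For the lower bound I would verify that every pair of strictly smaller total degree has nonvanishing product: for all $\tilde u,\tilde v\in W^{P_m}$ with $\ell(\tilde u)+\ell(\tilde v)=L-1$ one must have $\tilde u\leq w_0\tilde vw_{P_m}=:\tilde w$, where $\ell(\tilde w)=\dim\mathcal{D}(m)-\ell(\tilde v)$. Exploiting the symmetry $\tilde u\leftrightarrow\tilde v$, it is enough to run these comparisons for $\ell(\tilde u)=i$ over the range $1\leq i\leq (L-1)/2$, the case $i=0$ being trivial. Combined with the upper bound, this pins $\mbox{e.d.}(\mathcal{D}(m))$ to exactly $L-1$, and comparing $L$ with the corresponding entry of Table \ref{tabed} settles each case.

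The main obstacle is that, unlike the classical types treated in Section \ref{S:grassmannians} via index sets, the exceptional types admit no comparably clean combinatorial model of the Bruhat order on $W^{P_m}$, so the comparability checks must be carried out by direct computation inside the Weyl group. The practical difficulty is the sheer size of $W^{P_m}$ in the largest cases, notably $\mbox{E}_8$, which I would handle exactly as indicated: implement the root-system model of $W^{P_m}$ together with its Bruhat order and run the finitely many required comparisons on a computer, the Mathematica code being provided in \cite{HLL}. Two observations keep this feasible: only pairs near the middle degree need be tested, via the $\tilde u\leftrightarrow\tilde v$ symmetry, and one need only confirm comparability rather than enumerate all vanishing products.
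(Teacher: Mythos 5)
Your proposal is correct and follows essentially the same route as the paper: exhibit the explicit incomparable pairs of Table \ref{tabuw} (and the Hasse-diagram arguments for the small cases) for the upper bound, then verify by computer that all pairs with $\ell(\tilde u)+\ell(\tilde v)=L-1$ are comparable, using the $\tilde u\leftrightarrow\tilde v$ symmetry to restrict to $1\leq i\leq (L-1)/2$. The only addition is your explicit justification of the reduction from effective classes to Schubert classes via positivity of the structure constants, which the paper leaves implicit.
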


As we can see from Table \ref{tabuw}, all the pairs  $(u, w)$ satisfy the property that $u^{-1}$ is again a Grassmannian permutation with respect to another $\mathcal{D}(m')$, namely $u^{-1}\in W^{P_{m'}}$.
 Thus if $u$ were less than $w$, then any substring of $w$ that gives $u$ must start with the same digit as for $u$.
 This observation, together with
     the Lifting property \cite[Proposition 2.2.7]{BjBr}of the Bruhat order, enables us to verify the incomparable pairs even without using computers.
 \begin{example}
    For   $\mbox{E}_7(3)$, we have $u^{-1}\in W^{P_7}$, $w= w'   w''$ with $ w'=s_{624534132456}$ belonging to $W_{P_7}$.
          Hence,   $$u\leq w \Longleftrightarrow u\leq  w''\Longleftrightarrow s_7 u\leq  s_7w''.$$
    Here the first equivalence follows from the two properties: (1) any reduced expression of $u$ has to start with $s_7$, by noting   $u^{-1}\in W^{P_7}$ and using the third Corollary in \cite[section 10.2]{Hump}; (2)   $w'$ does not contain   $s_7$. The second equivalence follows from  the Lifting property of the Bruhat order.
    Now $\bar u:=s_7u= s_{65432413}$ satisfies similar property to $u$, i.e. $\bar u^{-1}\in W^{P_6}$.
    Continuously using the equivalence of the above form, we conclude $$u\leq w\Longleftrightarrow s_{432413}=s_5s_6s_7u\leq  s_{532413}
    \Longleftrightarrow s_{432413}\leq s_{413}=s_4s_1s_3,$$
while the last inequality obviously fails to hold. (The last equality is our notation convention.)
 \end{example}

\begin{table}[t]
\caption{An incomparable pair $(u, w)$ for $\mathcal{D}(m)$}\label{tabuw}
\hspace{-0.5cm} \begin{tabular}{|p{0.3cm}|c| l|l| l| c|}
   \hline
  $\mathcal{D}$ & $m$ &  {}\hspace{0.7cm} $u$  &{}\hspace{1.1cm} $v$ & {}\hspace{1.9cm}$w=w_0vw_{P_m}$ &$L$\\
    \hline \hline
    &1 & $s_{12321}$ & $s_{12342321}$  &$s_{2342321}$ &13\\
   \cline{2-6}
   & 2 & $s_{1232}$ & $s_{12342312312}$  &$s_{323432312}$ &15\\
  \cline{2-6}
  \raisebox{1.5ex}[0pt]{$\mbox{F}_4$} &  3 & $s_{4323}$ & $s_{43213243243}$  &$s_{232123243}$ &15\\
  \cline{2-6}
  &  4 & $s_{43234}$ & $s_{43213234}$  &$s_{3213234}$ &13\\
    \hline
    \hline
    &  1 & $s_{65431}$ & $s_{13452431}$  &$s_{13452431}$ &13\\
     \cline{2-6}
     &  2 & $s_{1342}$ & $s_{65432451342}$  &$s_{3456245342}$ &15\\
     \cline{2-6}
   \raisebox{1.5ex}[0pt]{$\mbox{E}_6$} &  3 & $s_{1345243}$ & $s_{65432413}$  &$s_{43245643245432413}$ &15\\
      \cline{2-6}
    & 4 & $s_{654324}$ & $s_{1345624534}$  & $s_{5341324564132451324}$ &16\\
     \hline
     \hline
   & 1 & $s_{765431}$ & $s_{76543245613452431}$ & $s_{6543245613452431}$&23\\
     \cline{2-6}
   & 2 & $s_{765432451342}$ &$s_{765432451342}$&$s_{564534132456734132456432451342}$&24 \\
     \cline{2-6}
   & 3 & $s_{765432413}$ & $s_{7654324561345243}$ & $s_{6245341324567541324561324532413}$&25  \\
     \cline{2-6}
   $\mbox{E}_7$ & 4 & $s_{76543245134}$   & $s_{765432456134524}$ & $s_{45624534132456745341324563413245341324}$&26    \\
     \cline{2-6}
   & 5 & $s_{76543245}$ & $s_{765432456713456245}$ & $s_{45624534132456745341324563413245}$&26   \\
    \cline{2-6}
   & 6 & $s_{765432456}$ & $s_{765432456713456}$ & $s_{345624534132456724534132456}$&24  \\
     \cline{2-6}
   & 7 & $s_{7654324567}$ & $s_{7654324567}$ & $s_{13456245341324567}$&20   \\

      \hline
\end{tabular}

   \begin{tabular}{|p{0.3cm}|c|l| l| c|}
   \hline
   $\mathcal{D}$ & $m$ &{}\hspace{1.5cm} $u$ &{}\hspace{1.9cm} $v$ &$L$ \\
     \hline
    & 1 & $s_{876543245613452431}$ & $s_{87654324567813456724563452431}$ &47  \\
      \cline{2-5}
    & 2 & $s_{8765432456713456245342}$ & $s_{87654324567813456724563451342}$ &51  \\
        \cline{2-5}
    & 3 &  $s_{87654324567134562453413}$ & $s_{8765432456781345672456345243}$ &51 \\
      \cline{2-5}
    & 4 & $s_{8765432456713456245341324}$ & $s_{876543245678134567245634524}$ &52\\
       \cline{2-5}
   $\mbox{E}_8$ & 5 & $s_{8765432456781345672456345}$ & $s_{87654324567134562453413245}$ &51\\
       \cline{2-5}
    & 6 & $s_{8765432456781345672456}$ & $s_{876543245671345624534132456}$ &49 \\
       \cline{2-5}
    &7 &  $s_{876543245678134567}$ & $s_{8765432456713456245341324567}$ &46 \\
     \cline{2-5}
    &8 &  $s_{876543245678}$ & $s_{87654324567134562453413245678}$ &41 \\
     \hline
   \hline
   &    &\multicolumn{3}{|c|}{   $w=w_0vw_{P_m}$}  \\
     \hline
  &  $1$ & \multicolumn{3}{|l|}{$s_{2456734562453413245678245341324567543245613452431}$ }     \\
      \cline{2-5}
   &  2 &   \multicolumn{3}{|l|}{$s_{543245671345624534132456783456245341324567245341324565432451342}$ }  \\
        \cline{2-5}
   & 3 &   \multicolumn{3}{|l|}{$s_{2453413245678543245671345624534132456783456245341324567654324561345243}$ } \\
     \cline{2-5}
   &  4 &  \multicolumn{3}{|l|}{ $s_{6543245671345624534132456713456245341324565432451342876543245678134567245634524}$ }  \\
      \cline{2-5}
  $\mbox{E}_8$ &  5 &  \multicolumn{3}{|l|}{$s_{765432456713456245341324567564534132456341324543241387654324567134562453413245}$ }\\
      \cline{2-5}
    & 6 & \multicolumn{3}{|l|}{ $s_{7654324567134562453413245678654324567134562453413245671345624534132456}$} \\
       \cline{2-5}
   & 7 &  \multicolumn{3}{|l|}{$s_{7654324567134562453413245678765432456713456245341324567}$ }  \\
      \cline{2-5}
   & 8 &  \multicolumn{3}{|l|}{ $s_{7654324567134562453413245678}$ } \\
     \hline
\end{tabular}

\end{table}

\section{Morphisms to rational homogeneous varieties}
In this section, we provide  applications of good effective divisibility on   the non-existence of non-constant morphisms from complex projective manifolds to rational homogeneous varieties. We will prove the main application Theorem \ref{mainthm3} right  after
      Proposition \ref{morphismprop}, which cares about  morphisms to Grassmannians.

Recall  $Q_{2n}= SO(2n+2, \mathbb{C})/P_{1}$ is a smooth quadric hypersurface in $\mathbb{P}^{2n+1}$.
The proof of the following proposition is from \cite[Corollary 5.3]{MOS22}.
\begin{prop}\label{morphismQ} Let $M$ be a connected complex projective manifold.
If {\upshape $\mbox{e.d.}(M)\geq 2n$}, then any morphism $\varphi: M\to Q_{2n}$  is a constant map.
\end{prop}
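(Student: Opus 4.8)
The plan is to argue by contradiction, exploiting the special shape of the cohomology ring of the even quadric. Recall that $H^*(Q_{2n},\mathbb{Z})$ is torsion-free, of rank one in every even degree except the middle degree $2n$, where it has rank two, spanned by the classes $A$ and $B$ of the two families of maximal linear subspaces $\mathbb{P}^n\subset Q_{2n}$. These classes are effective, $h^n=A+B$ for the hyperplane class $h$, and the decisive relation is
$$A\cup A=0\ (n\ \text{odd}),\qquad A\cup B=0\ (n\ \text{even}).$$
(This is also what yields $\mbox{e.d.}(Q_{2n})=\mbox{e.d.}(\mbox{D}_{n+1}(1))=2n-1$ in Table \ref{tabed}.) Accordingly I single out two effective classes $\alpha,\beta\in H^{2n}(Q_{2n})$ with $\alpha\cup\beta=0$: take $\alpha=\beta=A$ when $n$ is odd and $\alpha=A$, $\beta=B$ when $n$ is even. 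Assuming $\varphi$ is nonconstant, the goal is to produce from $\alpha,\beta$ a pair of nonzero effective classes on $M$ of total degree $2n$ whose product vanishes, contradicting $\mbox{e.d.}(M)\ge 2n$.

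First I would force $\varphi$ to be surjective. Since $h$ is very ample, $\varphi^*h$ is a nonzero base-point-free (hence effective) class, and writing $d=\dim\varphi(M)$ one has $(\varphi^*h)^d=\varphi^*(h^d)\neq 0$ while $(\varphi^*h)^{d+1}=\varphi^*(h^{d+1})=0$, because $h^{d+1}$ restricts to $0$ on the $d$-dimensional image. As $h^d$ and $h$ are effective, so are their pullbacks, and the vanishing product $\varphi^*(h^d)\cup\varphi^*h=0$ at total degree $d+1$ forces $\mbox{e.d.}(M)\le d=\dim\varphi(M)$. Combined with $\mbox{e.d.}(M)\ge 2n$ and $\dim\varphi(M)\le\dim Q_{2n}=2n$, this gives $\dim\varphi(M)=2n$, i.e. $\varphi$ is surjective.

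Next I would show that $\varphi^*\alpha$ and $\varphi^*\beta$ are nonzero and effective. Effectivity follows from Kleiman transversality: since $Q_{2n}$ is homogeneous under $G=SO(2n+2,\mathbb{C})$, a general translate of the linear $\mathbb{P}^n$ representing $A$ (or $B$) meets $\varphi$ transversally, so $\varphi^*A=[\varphi^{-1}(g\mathbb{P}^n)]$ is represented by a genuine subvariety. Nonvanishing is the main obstacle, and is precisely where the even quadric must be handled directly rather than by the tautological-bundle method of Proposition \ref{morphismprop}: there is no convenient globally generated bundle whose Chern classes recover $A$ and $B$. Instead I would use that a surjective morphism of smooth projective varieties induces an injection $\varphi^*\colon H^*(Q_{2n},\mathbb{Q})\hookrightarrow H^*(M,\mathbb{Q})$ — proved by restricting $\varphi$ to a general complete-intersection subvariety $M'\subset M$ of dimension $2n$, on which $\varphi|_{M'}$ is generically finite, and applying the projection formula $\varphi|_{M'*}\,\varphi|_{M'}^*=(\deg\varphi|_{M'})\cdot\mathrm{id}$, so that $\varphi^*$ factors an injection. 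Injectivity, together with torsion-freeness of $H^*(Q_{2n})$, gives $\varphi^*\alpha\neq 0\neq\varphi^*\beta$.

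Finally, $\varphi^*\alpha\cup\varphi^*\beta=\varphi^*(\alpha\cup\beta)=0$, with $\varphi^*\alpha,\varphi^*\beta$ nonzero effective classes in $H^{2n}(M)$ of total degree $n+n=2n$; this contradicts $\mbox{e.d.}(M)\ge 2n$, so $\varphi$ must be constant. The crux of the proof — and the reason this case is separated from the Grassmannian cases — is the nonvanishing step: everything hinges on first upgrading \emph{nonconstant} to \emph{surjective} via the ample class $h$, and then transporting the nonvanishing of $A$ and $B$ across $\varphi^*$ by means of generic finiteness and the projection formula.
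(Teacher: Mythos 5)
Your proof is correct, and it turns on the same special feature of $Q_{2n}$ as the paper's argument (which follows \cite[Corollary 5.3]{MOS22}): the middle cohomology $H^{2n}(Q_{2n})$ contains two effective ruling classes with a vanishing cup product, realized in the paper as $[X_u]\cup[X^w]=0$ for the two length-$n$ Schubert classes $u=s_n\cdots s_1$, $w=s_{n+1}s_{n-1}\cdots s_1$. The two arguments, however, assemble the ingredients in opposite orders. The paper pulls back this vanishing product first, uses $\mbox{e.d.}(M)\geq 2n$ to conclude $[\varphi^{-1}(X_u)]=0$, hence that $\varphi$ is not surjective, and then kills a positive-dimensional image by composing with a linear projection onto $\mathbb{P}^{\dim\varphi(M)}$ and comparing effective good divisibilities. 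You instead first upgrade ``nonconstant'' to ``surjective'' via the pair $\varphi^*(h^{d})$, $\varphi^*h$ with $\varphi^*(h^{d+1})=0$ --- essentially the hyperplane-power computation that the paper delegates to the $\mathbb{P}^d$ step --- and then refute surjectivity with a different auxiliary fact: injectivity of $\varphi^*$ on rational cohomology for surjective morphisms, obtained from the projection formula on a generically finite complete-intersection slice. The paper gets the corresponding nonvanishing more cheaply, since for surjective $\varphi$ the Kleiman-general preimage of a codimension-$n$ Schubert variety is a nonempty subvariety of codimension $n$ and therefore has nonzero class. Both routes are sound; yours is slightly more self-contained (it never invokes the $\mathbb{P}^d$ case of Proposition \ref{morphismprop}) and makes the parity dependence of the vanishing product ($A\cup A=0$ for $n$ odd, $A\cup B=0$ for $n$ even) explicit, whereas the paper packages it into the single identity $[X_u]=[X^w]$.
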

\begin{proof} For $G/P_1=Q_{2n}$, we let $u=s_{n}\cdots s_2s_1$ and $w=s_{n+1}s_{n-1}\cdots s_2s_1$. Then we have $u, w\in W^{P_1}$,
   $X_u\cap X^w=\emptyset$, $w_0uw_{P_1}=w$  and $[X_u]=[X^w]$. Hence,  
     $$[\varphi^{-1}(X_u)]\cup [\varphi^{-1}(X^w)]=\varphi^*([X_u])\cup \varphi^*([X^w])=\varphi^*([X_u]\cup [X^w])=0.$$
    Since $\mbox{e.d.}(M)\geq 2n=2\ell(u)$, we have $[\varphi^{-1}(X_u)]=0\in H^{2n}(M)$, so that $\varphi$ is not surjective. { Composing with
     a linear projection from a linear subspace $\Lambda$ of $\mathbb{P}^{2n+1}$ disjoint from $\varphi(M)$ of maximal dimension, we obtain a surjective morphism
      $\hat \varphi: M\to \mathbb{P}^{\dim \varphi(M)}$. } Since
      $\mbox{e.d.}(M)\geq 2n> \mbox{e.d.}(\mathbb{P}^{\dim \varphi(M)})=\dim \varphi(M)$,
      it follows that  $\dim\varphi(M)=0$. Hence, $\varphi$ is a constant.
\end{proof}

The next proposition strengthens  \cite[Proposition 4.7]{MOS20}, generalizing \cite[Corollary 3.2]{Tango} and \cite[Proposition 2.4]{NaOc}, while the proofs   are essentially the same as theirs. Recall our notation convention $G/P_m=\mathcal{D}(m)$ for $G$ of type $\mathcal{D}$.
\begin{prop}\label{morphismprop} Let $M$ be a connected complex projective manifold, and $\mathcal{D}(m)$  be a Grassmannian of classical type.
 If {\upshape $\mbox{e.d.}(M)>\mbox{e.d.}(\mathcal{D}(m))$}, then any morphism $\varphi: M\to \mathcal{D}(m)$  is   constant.
\end{prop}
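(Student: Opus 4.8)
The plan is to run Tango's Chern-class argument, but with the degree bound coming from $\mbox{e.d.}(M)$ in place of the dimension of a projective space. First I would pull back the tautological sequence \eqref{bundleseq} along $\varphi$, obtaining $c(\varphi^{*}\mathcal{S})\cup c(\varphi^{*}\mathcal{Q})=1$ in $H^{*}(M)$, since $\varphi^{*}\mathcal{V}_{X}$ is trivial. Here $\varphi^{*}c_{i}(\mathcal{S}^{\vee})$ and $\varphi^{*}c_{j}(\mathcal{Q})$ are effective, being pullbacks of the special Schubert classes $c_{i}(\mathcal{S}^{\vee})$ and $c_{j}(\mathcal{Q})$ (which are Schubert classes up to the scalar $2$ occurring in the orthogonal cases, see \eqref{Chernprod} and \cite{BKT,Tamv}). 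I would also record the vanishing ranges $c_{i}(\mathcal{S}^{\vee})=0$ for $i>m$ and $c_{j}(\mathcal{Q})=0$ for $j>n+k$ (respectively $j>n+1-m$ in type A), so that the total Chern classes of $\varphi^{*}\mathcal{S}$ and $\varphi^{*}\mathcal{Q}$ behave like polynomials of degrees at most $m$ and $n+k$.

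The core step is a single extraction of top degree. Let $d_{E}$ (resp. $d_{F}$) be the largest index with $\varphi^{*}c_{d_{E}}(\mathcal{S}^{\vee})\neq 0$ (resp. $\varphi^{*}c_{d_{F}}(\mathcal{Q})\neq 0$), set to $0$ if no positive index survives. If $d_{E}+d_{F}\geq 1$, then the degree $d_{E}+d_{F}$ component of the identity $c(\varphi^{*}\mathcal{S})\cup c(\varphi^{*}\mathcal{Q})=1$ has the single surviving term $c_{d_{E}}(\varphi^{*}\mathcal{S})\cup c_{d_{F}}(\varphi^{*}\mathcal{Q})$, which therefore vanishes. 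Up to sign this is $\varphi^{*}c_{d_{E}}(\mathcal{S}^{\vee})\cup \varphi^{*}c_{d_{F}}(\mathcal{Q})$, a cup product of two nonzero effective classes of total degree $d_{E}+d_{F}\leq m+(n+k)=\mbox{e.d.}(\mathcal{D}(m))+1\leq \mbox{e.d.}(M)$. By the definition of effective good divisibility this product cannot vanish, a contradiction. Hence $d_{E}=d_{F}=0$; in particular $\varphi^{*}c_{1}(\mathcal{S}^{\vee})=0$. To finish I would argue that $\varphi^{*}c_{1}(\mathcal{S}^{\vee})=0$ forces $\varphi$ to be constant: since $\mathcal{D}(m)$ has Picard rank one, the nonzero effective divisor class $c_{1}(\mathcal{S}^{\vee})$ is ample, so $\varphi$ has trivial degree against every curve and must contract $M$ to a point.

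The arithmetic identity $m+(n+k)=\mbox{e.d.}(\mathcal{D}(m))+1$, read off from the proof of Theorem \ref{mainthm1} in Section \ref{Pfclassical}, is precisely what makes the key product land in the range controlled by $\mbox{e.d.}(M)$. I expect this to be where the main obstacle lies: the identity is valid for every Grassmannian of classical type \emph{except} the even-dimensional quadric $Q_{2n}=\mathrm{D}_{n+1}(1)$, where $\mbox{e.d.}$ drops to $2n-1$ while $m+(n+k)=2n+1=\mbox{e.d.}+2$, so the relevant product sits one degree too high and the argument does not even start. This is exactly the reason the even quadric is excluded from the bundle method and treated separately in Proposition \ref{morphismQ}. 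The only remaining points needing care are the effectivity of the pulled-back special classes and the precise vanishing ranges of $c_{j}(\mathcal{Q})$ in types B, C and D, both of which are supplied by \eqref{Chernprod} together with \cite{BKT,Tamv}.
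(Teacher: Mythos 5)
Your argument is essentially the paper's: the same Tango-style extraction from $c(\varphi^*\mathcal{S})\cup c(\varphi^*\mathcal{Q})=1$ of the product of the top surviving Chern classes, the same use of the bound $m+(n+k)=\mbox{e.d.}(\mathcal{D}(m))+1\leq \mbox{e.d.}(M)$ to force that product to be nonzero and hence derive a contradiction, and the same finish via the ampleness of $\det\mathcal{S}^\vee$ (or $\det\mathcal{Q}$) and the projection formula on a curve joining two points with distinct images. Your single ``maximal indices'' step is a clean packaging of the paper's descending induction, and your treatment of the even quadric $\mbox{D}_{n+1}(1)$ --- where $m+(n+k)=\mbox{e.d.}+2$, so the method does not start and one falls back on Proposition \ref{morphismQ} --- is exactly the paper's.

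There is, however, one case your analysis does not cover: the maximal orthogonal Grassmannians $\mbox{D}_{n+1}(n)$ and $\mbox{D}_{n+1}(n+1)$. Your claim that the identity $m+(n+k)=\mbox{e.d.}(\mathcal{D}(m))+1$ holds for every classical Grassmannian except the even quadric is not correct there: the paper's whole $k$-strict-partition setup, the formula $c_{n+k}(\mathcal{Q})=2[X_{(n+k,0,\dots,0)}]\neq 0$, and the identity \eqref{Chernprod} are only established for $m<n$ in case d), and on the component of $OG(n+1,2n+2)$ one has $\mathcal{Q}\cong\mathcal{S}^\vee$ and the top Chern class of the rank-$(n+1)$ tautological quotient actually vanishes (a general hyperplane of $\mathbb{C}^{2n+2}$ contains no $(n+1)$-dimensional isotropic subspace), so the vanishing ranges and degree count you rely on break down. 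The paper closes this case in one line by invoking the isomorphisms $\mbox{D}_{n+1}(n)\cong\mbox{D}_{n+1}(n+1)\cong\mbox{B}_n(n)$ from Section 3.1 and running the bundle argument on $OG(n,2n+1)$, where $c_{n+1}(\mathcal{Q})=0$ and the top product $c_n(\mathcal{S}^\vee)\cup c_n(\mathcal{Q})$ sits in degree $2n=\mbox{e.d.}+1$ as required. Your proof needs this (easy) reduction added; everything else matches the paper.
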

\begin{proof}
 If $\mathcal{D}(m)=D_{n+1}(1)$, then we are done by Proposition \ref{morphismQ}.

 If $\mathcal{D}$ is of type $D_{n+1}$ and  $m\in \{n, n+1\}$, then we consider $B_n(n)$ instead, due to the isomorphisms $D_{n+1}(n)\cong D_{n+1}(n+1)\cong B_{n}(n)$.

For the remaining cases, we always consider  the tautological  vector bundles $\mathcal{S}^\vee$ and $\mathcal{Q}$ over $\mathcal{D}(m)$. They have effective nonzero Chern classes
  (see e.g. \cite{Tamv, BKT}):
        \begin{align*}
           c(\mathcal{S}^\vee)&=\sum_{i=0}^mc_i(\mathcal{S}^\vee)=\sum_{i=1}^m [X_{s_{m-i+1}\cdots s_{m-1}s_m}], \\
           c(\mathcal{Q})&=\sum_{i=0}^{N-m}c_i(\mathcal{Q})=\sum_{i=1}^{N-m} a_i [X_{v(i)}].
        \end{align*}
    Here $v(i)$ denotes the element in $W^{P_m}$ that corresponds to the ($k$-strict) partition $(i, 0, \cdots, 0)$; $N=\mbox{e.d.}(\mathcal{D}(m))+1$ by \eqref{Chernprod} and Theorem \ref{mainthm1}, and $a_i\in \{1, 2\}$ is  up to the type of $G$ and whether $i>n-m$. Then we can write $c(\varphi^*(\mathcal{S}^\vee))=1+\eta_1+\cdots+\eta_m$ and $c(\varphi^*(\mathcal{Q}))=1+\xi_1+\cdots+\xi_{N-m}$
       with $\eta_i, \xi_i\in H^{2i}(M)$ being effective classes.
       Since $1=c(\mathcal{S})\cup c(\mathcal{Q})$, we have $c_m(\mathcal{S}^\vee)\cup c_{N-m}(\mathcal{Q})=0$, implying
           $\eta_m \cup \xi_{N-m}=0$. Since $N=\mbox{e.d.}(\mathcal{D}(m))+1\leq \mbox{e.d.}(M)$, by definition we have
            $\eta_m=0$ or $\xi_{N-m}=0$. By induction we conclude that either $\eta_1=0$ or $\xi_1=0$ must hold. Notice
             $\eta_1=c_1(\varphi^*\det \mathcal{S}^\vee)$ and
        $\xi_1=c_1(\varphi^*\det \mathcal{Q})$.
                  We consider $\mathcal{L}= \det \mathcal{S}^\vee$ if $\eta_1=0$, or $\mathcal{L}=\det \mathcal{Q}$ otherwise. In either cases, $\mathcal{L}$ is an ample line bundle over $\mathcal{D}(m)$ with  $c_1(\varphi^* \mathcal{L})=0$.
                  Assume that $\varphi$ is not a constant morphism. Then $\varphi(x)\neq \varphi(y)$ for some points $x, y\in M$. Take an irreducible curve $C$ in $M$ passing through $x$ and $y$.  Then $\varphi(C)$ is of dimension one, and $\varphi_*([C])$ is a nonzero effective curve class in $H_2(\mathcal{D}(m), \mathbb{Z})$.   By projection formula, we have
              $$0=\varphi_*(c_1(\varphi^*\mathcal{L})\cap [C])=c_1(\mathcal{L})\cap \varphi_*[C]>0.$$
         This implies a contradiction.
              Hence,   $\varphi$ is a constant morphism.
        \end{proof}
\bigskip

 \begin{proof}[Proof of Theorem \ref{mainthm3}]
   By Theorem \ref{mainthm2},  we have  $\mbox{e.d.}(G/P)=\mbox{e.d.}(G/P_m)$ for some   $\alpha_m\in \Delta\setminus \Delta_P$. Recall that $\pi_m: G/P\to G/P_{m}=\mathcal{D}(m)$ denotes the natural projection. Then $\pi_m\circ \varphi: M\to \mathcal{D}(m)$ is a morphism with $\mbox{e.d.}(M)>\mbox{e.d.}(\mathcal{D}(m))$.
   Therefore $\pi_m\circ \varphi$ is a constant morphism by Proposition \ref{morphismprop}. That is, $\varphi(M)$ is inside a fiber of $\pi_m$. 
   
   The Dynkin diagram of  $\Delta_{P_m}=\Delta\setminus\{\alpha_m\}$ consists of $r$ connected components of type $\mathcal{D}^{(1)}, \cdots, \mathcal{D}^{(r)}$ respectively.
 Observe that
     the fiber $P_m/P$ is isomorphic to  a product $X_1\times \cdots \times X_r$ with $X_i=G^{(i)}/P^{(i)}$ being  a  rational homogeneous variety (possibly a point)   of type $\mathcal{D}^{(i)}$.
  As long as $X_r$ is not a point, we consider the composition $\hat \pi$ of the natural projections $X_1\times\cdots \times X_r\to X_r\to  G^{(r)}/\hat P=: {\mathcal{D}^{(r)}}(\hat m)$, where the $\hat P$ is a maximal parabolic subgroup of $G^{(r)}$ containing $P^{(r)}$.
  Clearly, either of the following cases must hold:   (1) $\mathcal{D}^{(r)}$ has the same Lie type as $\mathcal{D}$, and is of  rank  $|\mathcal{D}^{(r)}|<|\Delta|$; (2)  $\mathcal{D}^{(r)}$ is of type $A$,   $\mbox{h}(\mathcal{D}^{(r)})<\mbox{h}(\mathcal{D})-1$. Consequently, we always have $\mbox{e.d.}(G/P)>\mbox{e.d.}(\mathcal{D}^{(r)}(\hat m))$. Hence,  $\hat \pi\circ \varphi(M)$ is again a constant morphism  by Proposition \ref{morphismprop}. That is, 
   $\varphi(M)$ is  inside the fiber $\hat \pi$.
    
  By considering   iterated fibrations of $G/P$ and using induction, we conclude that $\varphi$ is a constant morphism.
       \end{proof}

\begin{remark}\label{rmkmorphism}
The notion {\upshape $\mbox{e.d.}(M)$} can be naturally extended in the setting of Chow rings $A^*(M)$ as in \cite{MOS22}. The above propositions can also be directly
  generalized to morphisms from  (possibly singular) projective varieties $M$. The proof of Proposition \ref{morphismprop} will also work, by adding one sentence  `` By Kleiman's transversality theorem, there exist  $g, g'\in G$, such that $\varphi^{-1}(g X_{s_{m-i+1}\cdots s_{m-1}s_m})$'s and
   $\varphi^{-1}(g'X_{v(i)})$'s are all generically reduced and of the same codimension as the corresponding Schubert varieties; this ensures that
     $\varphi^*([X_{s_{m-i+1}\cdots s_{m-1}s_m}])=[\varphi^{-1}(g X_{s_{m-i+1}\cdots s_{m-1}s_m})]$
and $\varphi^*([X_{v(i)}])=[\varphi^{-1}(g X_{v(i)})]$ are all effective classes in   $A^i(M)$".
\end{remark}
As a consequence of Theorem \ref{mainthm3},  we obtain  Corollary \ref{corsubdiag} and the follow proposition, by directly    comparing  $\mbox{e.d.}(G/P_m)$'s using Theorem \ref{mainthm1}.
\begin{cor}
   Let $G, \tilde G$ be of the same classical type with $\mbox{rank}(G)<\mbox{rank}(\tilde G)$. For any parabolic subgroups $P\subset G$ and $\tilde P\subset \tilde G$, there does not exist any non-constant morphism $\tilde G/\tilde P\to G/P$.
\end{cor}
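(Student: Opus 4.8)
The plan is to invoke Theorem \ref{mainthm3} directly. Since $\tilde G/\tilde P$ is a connected complex projective manifold and $G/P$ is of classical Lie type, it suffices to establish the strict inequality $\mbox{e.d.}(\tilde G/\tilde P) > \mbox{e.d.}(G/P)$. By Theorem \ref{mainthm2}, both quantities are minima of Grassmannian values $\mbox{e.d.}(\mathcal{D}(m))$, so the problem reduces to comparing the relevant entries of Table \ref{tabed} for the common classical type of $G$ and $\tilde G$.

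First I would handle types $A$, $B$, $C$ uniformly. For these types the remark following Theorem \ref{mainthm2} records that $\mbox{e.d.}(\mathcal{D}(m))$ is independent of $m$; explicitly $\mbox{e.d.}(A_n(m)) = n$ and $\mbox{e.d.}(B_n(m)) = \mbox{e.d.}(C_n(m)) = 2n-1$. Hence $\mbox{e.d.}(G/P)$ and $\mbox{e.d.}(\tilde G/\tilde P)$ depend only on the ranks of $G$ and $\tilde G$, and the strict monotonicity of $n \mapsto n$ (type $A$) and of $n \mapsto 2n-1$ (types $B$, $C$) in the rank yields the desired inequality as soon as $\mbox{rank}(G) < \mbox{rank}(\tilde G)$.

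The only case requiring a little care is type $D$, where $\mbox{e.d.}(D_{n+1}(m))$ is no longer constant in $m$: it equals $2n-1$ for $m \in \{1, n, n+1\}$ and $2n$ for $2 \leq m < n$. Writing $\mbox{rank}(G) = n+1$ and $\mbox{rank}(\tilde G) = \tilde n + 1$ with $n < \tilde n$, I would use Theorem \ref{mainthm2} to bound $\mbox{e.d.}(G/P) \leq 2n$ (the largest table value occurring for rank $n+1$) and $\mbox{e.d.}(\tilde G/\tilde P) \geq 2\tilde n - 1$ (the smallest table value occurring for rank $\tilde n + 1$). The rank gap $n \leq \tilde n - 1$ then gives $\mbox{e.d.}(G/P) \leq 2n \leq 2\tilde n - 2 < 2\tilde n - 1 \leq \mbox{e.d.}(\tilde G/\tilde P)$, completing the comparison.

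With $\mbox{e.d.}(\tilde G/\tilde P) > \mbox{e.d.}(G/P)$ established in every classical type, Theorem \ref{mainthm3} forces any morphism $\tilde G/\tilde P \to G/P$ to be constant. There is no genuine obstacle here beyond the bookkeeping in type $D$: the failure of $m$-independence means one must argue with the extremal table values rather than with a single closed formula, but the factor-of-two growth of $\mbox{e.d.}$ in the rank comfortably absorbs the variation over $m$, so a rank gap of one already suffices.
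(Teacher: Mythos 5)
Your proposal is correct and follows the same route the paper takes: the paper derives this corollary as an immediate consequence of Theorem \ref{mainthm3} by directly comparing the values of $\mbox{e.d.}(\mathcal{D}(m))$ from Theorem \ref{mainthm1} (via Theorem \ref{mainthm2}), exactly as you do. Your explicit bookkeeping in type $D$, bounding $\mbox{e.d.}(G/P)\leq 2n$ and $\mbox{e.d.}(\tilde G/\tilde P)\geq 2\tilde n-1$, correctly supplies the one detail the paper leaves implicit.
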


\end{document}